  \def\MR#1{}
\newcommand{\SL}{\mathrm{SL}}
\newcommand{\GL}{\mathrm{GL}}
\newcommand{\Mat}{\mathrm{Mat}}
\newcommand{\SO}{\mathrm{SO}}
\newcommand{\algO}{\mathrm{O}}
\newcommand{\GSp}{\mathrm{GSp}}
\newcommand{\GSpin}{\mathrm{GSpin}}
\newcommand{\oo}{\mathcal{O}}
\newcommand{\p}{\mathfrak{p}}
\newcommand\Z{\mathbb{Z}}
\newcommand\A{\mathbb{A}}
\newcommand\F{\mathbb{F}}
\newcommand\Q{\mathbb{Q}}
\newcommand\R{\mathbb{R}}
\newcommand\C{\mathbb{C}}
\newcommand\dt{\mathbb{S}}
\newcommand\K {\mathbb{K}}
\newcommand{\plim}[1][]{\mathop{\varprojlim}\limits_{#1}}
\newcommand\et{\textup{\'et}}
\newcommand\e{\textup{\'e}}
\newcommand\End{\textup{End}}
\newcommand\Aut{\textup{Aut}}
\newcommand\Ker{\textup{Ker}}
\newcommand\Gal{\mathrm{Gal}}
\newcommand\Shaf{\mathrm{Shaf}}
\newcommand\Sh{\mathrm{Sh}}
\newcommand\Pic{\mathrm{Pic}}
\newcommand\disc{\mathrm{disc}}
\newcommand\Spec{\mathop{\mathrm{Spec}}}
\newcommand\Frac{\mathrm{Frac}}
\newcommand\shf{\mathrm{shf}}
\newcommand\Tw{\mathrm{Tw}}
\newcommand\ch{\mathrm{ch}}
\newcommand\spp{\mathrm{sp}}
\newcommand\tr{\mathrm{tr}}
\newcommand\la{\mathcal{L}}
\newcommand\M{\mathrm{M}}
\newcommand\W{\mathrm{W}}
\newcommand\gr{\mathrm{gr}}
\theoremstyle{plain}
\newtheorem{theorem}[subsubsection]{Theorem}
\newtheorem{prop}[subsubsection]{Proposition}
\newtheorem{lemma}[subsubsection]{Lemma}
\newtheorem{corollary}[subsubsection]{Corollary}
\theoremstyle{definition}
\newtheorem{definition}[subsubsection]{Definition}
\newtheorem{remark}[subsubsection]{Remark}
\newtheorem*{lemma*}{Lemma}
\newtheorem*{prop*}{Proposition}
\newtheorem*{theorem*}{Theorem}
\newtheorem*{claim*}{Claim}
\newtheorem{definition*}{Definition}
\begin{document}

\title[Cohomological Shafarevich conjecture for K3 surfaces]{On a cohomological generalization of the Shafarevich conjecture for K3 surfaces}
\author[T.\ Takamatsu]{Teppei Takamatsu}

\date{\today}

\subjclass[2010]{Primary 14J28; Secondary 11F80, 11G18, 11G25, 11G35}
\keywords{K3 surfaces, Shafarevich conjecture, good reduction}

\address{Graduate School of Mathematical Sciences, The University of Tokyo, 3-8-1 Komaba, Meguro-ku, Tokyo 153-8914, Japan}
\email{teppei@ms.u-tokyo.ac.jp}

\maketitle
\begin{abstract}
The Shafarevich conjecture for K3 surfaces asserts the finiteness of isomorphism classes of K3 surfaces over a fixed number field admitting good reduction away from a fixed finite set of finite places. Andr$\e$ proved this conjecture for polarized K3 surfaces of fixed degree, and recently She proved it for polarized K3 surfaces of unspecified degree. In this paper, we prove a certain generalization of their results, which is stated by the unramifiedness of $\ell$-adic $\e$tale cohomology groups for K3 surfaces over finitely generated fields of characteristic $0$. As a corollary, we get the original Shafarevich conjecture for K3 surfaces without assuming the extendability of polarization, which is stronger than the results of Andr\'e and She. 
Moreover, as an application, we get the finiteness of twists of K3 surfaces via a finite extension of characteristic $0$ fields. 
\end{abstract}

\section{Introduction}
The Shafarevich conjecture for abelian varieties is a remarkable result which asserts the finiteness of isomorphism classes of abelian varieties of a fixed dimension over a fixed number field admitting good reduction away from a fixed finite set of finite places. This theorem was proved by Faltings (in the polarized case, see \cite{Faltings1983}) and Zarhin (in the unpolarized case, see \cite{Zarhin1985}).

In this paper, we shall prove an analogue of this theorem for K3 surfaces. For any discrete valuation field $K$ and a K3 surface $X$ over $K$, we say $X$ has good reduction if $X$ admits a smooth proper model over the valuation ring of $K$, as an algebraic space (see \cite[Section 1]{Liedtke2018}). 
\footnote{Note that it is natural to admit an integral model being an algebraic space rather than a scheme in the case of K3 surfaces (see \cite[Section 5.2]{Matsumoto2015a}).} 
Then one can formulate the analogue of the Shafarevich conjecture for K3 surfaces. 
Previously, this conjecture was studied by Andr$\e$ (\cite{Andre1996}) and She (\cite{She2017}) for polarized K3 surfaces. The goal of this paper is to generalize their results in terms of the unramifiedness of $\ell$-adic $\e$tale cohomology groups. Our main theorem is the following (for more generalized form, see Theorem \ref{main}).

\begin{theorem}[compare with Theorem \ref{main}]\label{introgenunp}
Let $F$ be a finitely generated field over $\Q$, and $R$ be a finite type algebra over $\Z$ which is a normal domain with the fraction field $F$. Then, the set
\[
\Shaf(F,R)\coloneqq
\left\{
X\left|
\begin{array}{l}
X \colon\textup{K3 surface over }F,\\

\textup{for any height $1$ prime ideal $\p \in \Spec R$,}\\
\textup{there exists a prime number $\ell \notin \p$}\\
\textup{such that $H^{2}_{\et}(X_{\overline{F}},\Q_{\ell})$ is unramified at $\p$}
\end{array}
\right.
\right\}/F\textup{-isom}
\]
is finite.
\end{theorem}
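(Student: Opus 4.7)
The plan is to reduce Theorem \ref{introgenunp} to the polarized Shafarevich theorem for K3 surfaces over number fields, proved by Andr\'e \cite{Andre1996} and She \cite{She2017}. Three features of the present statement need to be bridged relative to those results: the hypothesis is cohomological (unramifiedness of $H^{2}_{\et}$ at each height-$1$ prime for some $\ell$ coprime to the residue characteristic) rather than good reduction of a fixed model; the K3 surfaces carry no polarization; and $F$ is a general finitely generated field of characteristic $0$ rather than a number field.

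For the first feature, at each height-$1$ prime $\p$ of $R$, a N\'eron--Ogg--Shafarevich-type criterion for K3 surfaces (due to Matsumoto) upgrades unramifiedness of $H^{2}_{\et}(X_{\overline{F}},\Q_{\ell})$ for a single $\ell$ coprime to the residue characteristic to good reduction as an algebraic space after a finite unramified base change. Spreading out the generic K3 surface and gluing with these local models produces, after a finite extension $F'/F$ with normalization $R'/R$, a smooth proper algebraic space $\mathcal{X}/R'$ extending $X_{F'}$ in codimension $1$. For the second feature, since $H^{1}(\mathcal{O}_{X_{\overline{F}}})=0$ the relative Picard functor of $\mathcal{X}/R'$ is representable by an \'etale group algebraic space; as K3 surfaces in characteristic $0$ are projective, $X_{F'}$ carries an ample line bundle $L$, and the \'etale property combined with normality of $R'$ and a purity-type argument allows $L$ to be extended to a polarization of $\mathcal{X}$ after a further finite cover. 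The resulting polarized pair $(X_{F'}, L)$ with good reduction over $R'$ falls within the scope of Andr\'e/She, whose theorem gives finiteness of such pairs over a number field; Galois descent recovers finiteness over $F'$, and hence over $F$.

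For the third feature, one inductively reduces the transcendence degree of $F$ over $\Q$ by specializing along a smooth curve in $\Spec R$ whose function field has transcendence degree one smaller, and applying Hilbert-irreducibility-type arguments to bound non-isomorphic specializations. The main obstacle is the polarization step: producing, uniformly across all $X \in \Shaf(F,R)$, a polarization defined over a controlled extension of $F$ that extends to the integral model. This requires carefully combining unramifiedness of the Galois action on $\Pic(X_{\overline{F}})$---which follows from its embedding into $H^{2}_{\et}$---with extension results for line bundles on normal algebraic spaces, a delicate point since the integral model is generally not a scheme.
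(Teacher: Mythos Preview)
Your proposal has two genuine gaps that prevent it from working, and the paper takes a fundamentally different route precisely to avoid them.

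\textbf{First gap: the N\'eron--Ogg--Shafarevich criterion for K3 surfaces is conditional.} The Liedtke--Matsumoto result you invoke (unramified $H^{2}_{\et}$ implies good reduction after an unramified extension) is \emph{not} known unconditionally: it requires the K3 surface to admit a potentially semi-stable (``Kulikov'') model over the local ring, which is an open problem in mixed characteristic. You cannot simply assert that each $X\in\Shaf(F,R)$ acquires a smooth proper algebraic-space model after a finite unramified extension at every height-$1$ prime. The paper explicitly avoids any such hypothesis (see Remark~\ref{remlindep}~(2)).

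\textbf{Second gap: polarizations need not extend.} Even granting good reduction of $X$, your plan to extend an ample line bundle $L$ on the generic fibre to a polarization of the integral model $\mathcal{X}$ fails in general. The paper's Remark~\ref{introrem} recalls Matsumoto's example of a K3 surface with good reduction such that \emph{no} smooth proper model admits a polarization. So the ``delicate point'' you flag is in fact an obstruction, not a technicality; purity for line bundles on a normal algebraic space does not give relative ampleness.

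\textbf{How the paper proceeds instead.} The paper never establishes good reduction of the K3 surface. It applies the (uniform) Kuga--Satake construction to send $(X,L)$ to an abelian variety $A^{(X,L,\alpha)}$ over $F$, and then proves directly (Proposition~\ref{good}) that unramifiedness of $H^{2}_{\et}(X_{\overline F},\Q_{2})$ forces $H^{1}_{\et}(A^{(X,L,\alpha)}_{\overline F},\Z_{2})$ to be unramified, using the explicit Clifford-algebra description of the Galois action. The genuine N\'eron--Ogg--Shafarevich criterion for abelian varieties (which \emph{is} unconditional) then gives good reduction of $A^{(X,L,\alpha)}$, and Faltings--Zarhin finiteness for abelian varieties over finitely generated fields finishes the polarized case. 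The unpolarized case is handled by She's uniform Kuga--Satake map (same target for all $d$), together with a lattice-theoretic argument bounding $\disc(\Pic_{X/F}(F))$ via the transcendental lattice recovered from the abelian side. Finally, an $\ell$-independence lemma (Section~5) upgrades the single-$\ell$ hypothesis to the $\ell=2$ hypothesis used throughout. No reduction from finitely generated fields to number fields is needed; Faltings' theorem is applied directly over $F$.
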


As a corollary, we have the original Shafarevich conjecture for K3 surfaces over finitely generated fields of characteristic $0$.

\begin{corollary}[Corollary \ref{shafunp}] \label{introshafunp}
Let $F$ be a finitely generated field over $\Q$, and $R$ be a finite type algebra over $\Z$ which is a normal domain with the fraction field $F$.
Then, the set
\[
\left\{X \left|
\begin{array}{l}
X\colon \textup{K3 surface over }F,\\
X \textup{ has good reduction at any height }1 \textup{ prime ideal }\p \in \Spec R
\end{array}
\right.\right\}/F\textup{-isom}
\]
is finite.
\end{corollary}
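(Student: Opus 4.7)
The plan is to deduce Corollary \ref{introshafunp} directly from Theorem \ref{introgenunp} by showing that the ``good reduction'' set is contained in $\Shaf(F, R)$; once this inclusion is established, the finiteness is immediate.

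To verify the containment, I would take a K3 surface $X/F$ with good reduction at every height one prime $\p$ of $\Spec R$, and fix such a $\p$. Write $\oo_{\p}$ for the localization of $R$ at $\p$; it is a discrete valuation ring with fraction field $F$. By hypothesis there exists a smooth proper algebraic space $\pi \colon \mathcal{X} \to \Spec \oo_{\p}$ whose generic fiber is $X$. Because $\p$ is a height one prime of a finite type $\Z$-algebra, $\p \cap \Z$ is either $(0)$ or $(p)$ for a single rational prime $p$; in either case a prime number $\ell \notin \p$ exists, and for any such $\ell$ the integer $\ell$ is invertible on $\mathcal{X}$.

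The crucial input is smooth and proper base change for $\ell$-adic \'etale cohomology of algebraic spaces, which ensures that $R^{2}\pi_{*}\Q_{\ell}$ is a lisse $\Q_{\ell}$-sheaf on $\Spec \oo_{\p}$. Consequently the $\Gal(\overline{F}/F)$-representation $H^{2}_{\et}(X_{\overline{F}}, \Q_{\ell})$ is unramified at $\p$. Hence $X$ lies in $\Shaf(F, R)$, and Theorem \ref{introgenunp} delivers the claimed finiteness.

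The only nontrivial point is the extension of smooth-proper base change from schemes to algebraic spaces, which is needed because good-reduction models of K3 surfaces are in general only algebraic spaces (as recalled in the introduction, following Matsumoto and Liedtke). This is standard and well documented in the literature; beyond citing it, the corollary is a formal consequence of the main theorem.
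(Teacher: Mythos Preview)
Your proof is correct and matches the paper's approach: the paper states Corollary~\ref{shafunp} as ``an immediate consequence'' of Theorem~\ref{main} (equivalently Theorem~\ref{introgenunp}) without further argument, and the implicit step is precisely the containment you spell out via smooth proper base change for algebraic spaces. Your write-up simply makes explicit what the paper leaves to the reader.
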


Note that our results are stronger than results of Andr$\e$ and She (see Remark \ref{introrem} for details).
Moreover, as an application of our cohomological generalization, we get the following corollary, which asserts the finiteness of twists of a K3 surface via a finite extension of characteristic $0$ fields. 

\begin{corollary}[Corollary \ref{fintwists}] \label{introfintwists}
Let $F$ be a field of characteristic $0$, $E/F$ be a finite extension, and $X$ be a K3 surface over $F$. Then, the set
\[
\Tw_{E/F}(X)\coloneqq
\{Y\colon \textup{K3 surface over }F \mid Y_{E}\simeq_{E}X_{E}
\}/F\textup{-isom}
\]
is finite. 
Here $Y_{E} \simeq_{E} X_{E}$ means the K3 surfaces $Y_{E}\coloneqq Y\otimes_{F}E$ and $X_{E}\coloneqq X\otimes_{F}E$ are isomorphic over $E$.
\end{corollary}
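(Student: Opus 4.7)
The plan is to reduce the corollary to Theorem~\ref{introgenunp} by a descent argument. First, replace $E$ by its Galois closure over $F$; since this only enlarges $\Tw_{E/F}(X)$, we may assume $E/F$ is a finite Galois extension with group $G = \Gal(E/F)$.

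Next I spread out. Since $E$ and $X$ involve only finitely many elements of $F$, there exists a finitely generated subfield $F_0 \subset F$ over $\Q$ together with a finite Galois extension $E_0 \subset E$ of $F_0$ such that $E_0 \otimes_{F_0} F \cong E$, $\Gal(E_0/F_0) = G$, and $X$ descends to a K3 surface $X_0/F_0$. Using finite generation of $\Aut(X_{\overline{F}})$ for K3 surfaces in characteristic $0$ (a consequence of Sterk's theorem), I further enlarge $F_0$ (hence $E_0$) so that the natural inclusion $\Aut(X_{0, E_0}) \hookrightarrow \Aut(X_E)$ becomes an equality: concretely, pick finitely many generators of $\Aut(X_E)$ and arrange the coefficients defining them to lie in $E_0$. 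Then the base-change map $\Tw_{E_0/F_0}(X_0) \to \Tw_{E/F}(X)$ is bijective, both sides being classified by $H^1(G, \Aut(X_E))$. Finally, by inverting finitely many elements, I find a finite type normal $\Z$-subalgebra $R_0 \subset F_0$ with $\Frac(R_0) = F_0$ such that (i) $X_0$ extends to a smooth proper algebraic space $\mathcal{X}_0$ over $\Spec R_0$, and (ii) the integral closure $S_0$ of $R_0$ in $E_0$ is finite \'etale over $R_0$.

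It now suffices to show $\Tw_{E_0/F_0}(X_0) \subseteq \Shaf(F_0, R_0)$, which is finite by Theorem~\ref{introgenunp}. Given $Y_0 \in \Tw_{E_0/F_0}(X_0)$ and a height $1$ prime $\p$ of $R_0$, choose a prime $\mathfrak{P}$ of the integral closure of $R_0$ in $\overline{F_0}$ above $\p$, with inertia $I_{\mathfrak{P}} \subset G_{F_0}$. The \'etaleness of $S_0/R_0$ at $\p$ forces $I_{\mathfrak{P}} \subset G_{E_0}$. Since $Y_{0, E_0} \simeq X_{0, E_0}$, the $\ell$-adic cohomology groups $H^2_{\et}(Y_{0, \overline{F_0}}, \Q_\ell)$ and $H^2_{\et}(X_{0, \overline{F_0}}, \Q_\ell)$ are isomorphic as $G_{E_0}$-representations, and hence as $I_{\mathfrak{P}}$-representations. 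Smooth-proper base change applied to $\mathcal{X}_0/R_0$ yields triviality of the $I_{\mathfrak{P}}$-action on $H^2_{\et}(X_{0,\overline{F_0}},\Q_\ell)$ whenever $\ell \notin \p$, so the same triviality holds for $H^2_{\et}(Y_{0,\overline{F_0}},\Q_\ell)$, verifying the unramifiedness hypothesis of Theorem~\ref{introgenunp}.

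The main obstacle lies in the descent step, namely arranging $\Aut(X_{0, E_0}) = \Aut(X_E)$ so that twists over $F$ correspond bijectively to twists over $F_0$. This requires the finite generation of the automorphism group of a K3 surface in characteristic $0$ and careful enlargement of $F_0$ and $E_0$. Once descent is in place, the application of the main theorem via inertia control coming from \'etaleness of $S_0/R_0$ is essentially formal.
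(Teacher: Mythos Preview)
Your proof is correct and follows essentially the same strategy as the paper: reduce to finitely generated fields via Galois cohomology and Sterk's finite generation of $\Aut$, spread out so that the integral closure in $E_0$ is finite \'etale, and then use that \'etaleness forces inertia to lie inside $G_{E_0}$, whence the unramifiedness of $H^2_{\et}$ for every twist follows from that of $X_0$ and the main theorem applies. The only cosmetic differences are that the paper first enlarges $E$ so that $\Aut(X_E)=\Aut(X_{\overline F})$ before descending to a finitely generated subfield, and that it phrases the inertia step via the equality $\Ker(\pi_1(\Spec E)\to\pi_1(\Spec\widetilde R))=\Ker(\pi_1(\Spec F)\to\pi_1(\Spec R))$ rather than the direct inclusion $I_{\mathfrak P}\subset G_{E_0}$; both formulations express the same fact.
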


We note that our cohomological generalization is necessary for this application, i.e.\ the original statement of the Shafarevich conjecture (Corollary \ref{introshafunp}) is not enough to show Corollary \ref{introfintwists}.

Let us give some comments on the statement of Theorem \ref{introgenunp}. Theorem \ref{introgenunp} is motivated by the good reduction criterion for K3 surfaces given by Liedtke and Matsumoto (\cite{Liedtke2018}). For K3 surfaces over a Henselian discrete valuation field satisfying some assumptions, they showed the equivalence between the unramifiedness of $\ell$-adic $\et$ale cohomology groups and admitting good reduction after a finite unramified extension (\cite[Theorem 1.3]{Liedtke2018}). Note that the latter condition cannot be replaced by `admitting good reduction' (see \cite[Theorem 1.6]{Liedtke2018}), so our cohomological generalization is stronger than the original Shafarevich conjecture. Moreover, we deal with finitely generated fields of characteristic $0$ rather than number fields, motivated by the application to Corollary \ref{introfintwists}. 
In fact, Andr$\e$ also proved the Shafarevich conjecture for polarized K3 surfaces in this way (see \cite[Theorem 9.1.1]{Andre1996}, and see also the following Remark \ref{introrem}).

\begin{remark}\label{introrem}
Our results are stronger than previous results obtained by Andr$\e$ and She. 
To explain this, we briefly recall their results.
Andr$\e$ proved the Shafarevich conjecture for polarized K3 surfaces (\cite[Theorem 9.1.1]{Andre1996}), i.e.\ the finiteness of isomorphism classes of polarized K3 surfaces of fixed degree over a fixed number field  which admit good reduction away from a fixed finite set of finite places (actually, as stated above, Andr$\e$ dealt with finitely generated fields of characteristic $0$). 
Here, Andr$\e$ said that a polarized K3 surface $(X,L)$ admits good reduction if there exists a smooth proper model $\mathcal{X}$ of $X$ as a scheme such that the ample line bundle $L$ extends to an ample line bundle on $\mathcal{X}$.
Recently She proved it for polarized K3 surfaces of unspecified degree (\cite[Theorem 1.1.5]{She2017}). 
More correctly, She proved the finiteness of K3 surfaces over a fixed number field which admit good reduction as polarized K3 surfaces (without fixing polarization degree) away from a fixed finite set of finite places. 
Here, we remark that She's result does not cover K3 surfaces admitting a smooth proper model only as an algebraic space.
Moreover, there exists an example of a K3 surface admitting good reduction such that 
no smooth proper model has a polarization (therefore this K3 surface does not admit good reduction as polarized K3 surfaces) (see \cite[Section 5.2]{Matsumoto2015a}).
Therefore, Corollary \ref{introshafunp} is also stronger than previous results, even in the number field case.
\end{remark}

The strategy of the proof of Theorem \ref{introgenunp} is as follows. We basically take the approach of Andr$\e$ and She. 
We first show the polarized version of Theorem \ref{introgenunp} before dealing with the unpolarized case. 
To generalize the result obtained by Andr\'{e}, we should formulate the Kuga--Satake construction as preserving the finiteness. 
We achieve this by using the moduli interpretation of the Kuga--Satake construction introduced by Rizov (\cite{Rizov2010}).
In the perspective of the unpolarized case, we use the uniform Kuga--Satake construction introduced by She to study K3 surfaces of all degrees simultaneously.
Our proof is slightly different from She's proof, and here we will sketch the differences. 
In She's paper (\cite{She2017}), it is crucial to show that K3 surfaces admitting good reduction are sent to abelian varieties admitting good reduction via the uniform Kuga--Satake map. 
She proves this using integral canonical models of certain Shimura varieties (the argument like `$\oo$-valued points go to $\oo$-valued points'). However, in our case, we do not assume that each K3 surface admits a smooth proper model, so instead of She's method, we use the N$\e$ron--Ogg--Shafarevich criterion for abelian varieties (this approach is already known by Andr\'{e} \cite{Andre1996} see also \cite{Imai2020}). 
For this purpose, we study She's uniform Kuga--Satake construction in detail in Section $3$. 
Note that our proof does not require the theory of integral canonical models of Shimura varieties. 
To simplify the arguments, we need a section of the natural map from the $\GSpin$ Shimura variety to the $\SO$ Shimura variety (see \ref{subsectionuks}). To get such a section, we should work with a level structure that has a sufficiently small $\Z_{2}$-component (see Remark \ref{rem2adic}). 
So we should suppose the unramifiedness of $2$-adic representation to overcome that general K3 surfaces may not admit level structure (see Proposition \ref{redp}).
However, this assumption is not essential since $\ell$-independence of the unramifiedness is true in a general situation (see Lemma \ref{lindep}).
Note that Lemma \ref{lindep} is essentially known by Madapusi Pera, Matsumoto \cite{Matsumoto2016} and Imai--Mieda \cite{Imai2020} (see Remark \ref{remlindep}).

The outline of this paper is as follows. In Section $2$, we will recall the basic results on K3 surfaces, and define the moduli space of K3 surfaces introduced by Rizov and Madapusi Pera. 
In Section $3$, we will define several algebraic groups to introduce the uniform Kuga--Satake abelian varieties, and study their basic properties. 
In Section $4$, we will prove the main theorem in a little weaker form (i.e.\ only considering $2$-adic cohomology) by using the results of Section $3$ and the arguments given by Andr$\e$ and She.
In Section $5$, we will see an $\ell$-independence of the unramifiedness by using Matsumoto's result on weight filtrations \cite[Theorem 3.3]{Matsumoto2016}. We also prove a crystalline analogue of it by using Ochiai's $\ell$-independence results \cite{Ochiai1999} and the Kuga--Satake abelian varieties (as above, this result is essentially proved in \cite{Imai2020}). 
In Section $6$, we will complete the proof of the main theorem combining the results in Section $4$ and Section $5$, and prove the finiteness of twists via a finite extension of characteristic $0$ fields.

\subsection*{Acknowledgments}
The author is deeply grateful to my advisor Naoki Imai for his deep encouragement and helpful advice. He carefully read the draft version of this paper and pointed out a lot of mistakes and typos. The author is greatly indebted to Tetsushi Ito for his warm encouragement, many comments, and invaluable suggestions for the proof. Moreover, the author would like to thank Kazuhiro Ito for helpful comments about the definition of the moduli of K3 surfaces, and Yoichi Mieda for valuable comments about Section $5$. 
The author also thanks the referee for many helpful comments, including suggestions on Remark \ref{remunifks} and Proposition \ref{good}.
The author is supported by the FMSP program at the University of Tokyo.

\section{K3 surfaces and their moduli}
\subsection{Basic definitions for K3 surfaces}
In this subsection, we give definitions and basic notations about K3 surfaces. 
\begin{definition}\label{K3} \textup{ }
\begin{enumerate}
\item
For any field $k$, a \emph{K3 surface over k} is a smooth proper surface $X$ over $k$ with $\Omega^{2}_{X/k} \simeq \mathcal{O}_{X}$ and $H^{1}(X, \mathcal{O}_{X})=0.$
\item
For any scheme $S$, a \emph{K3 family over $S$} is a smooth proper algebraic space $\mathcal{X}$ over $S$ whose geometric fibers are K3 surfaces.

\end{enumerate}
\end{definition}
\begin{remark}
For any field $k$, a K3 family over $k$ is automatically a K3 surface over $k$ since smooth proper algebraic spaces of dimension $2$ over a field are schemes.
\end{remark}

\begin{definition}[{\cite[Definition 3.2.2, Definition 3.2.3]{Rizov2006}}] \textup{ }
\begin{enumerate}
\item
A \emph{polarization} on a K3 family $\pi\colon \mathcal{X}\rightarrow S$ is an element $\lambda \in \Pic_{\mathcal{X}/S}(S)$ whose pullback by any geometric point of $S$ is an ample line bundle. Here $\Pic_{\mathcal{X}/S}$ is the relative Picard functor.
\item
A polarization $\lambda$ is \emph{primitive} if its pullback by any geometric point of $S$ is primitive, i.e.\ not divisible by an integer greater than $1$.
\item
A polarization $\lambda$ is \emph{of degree $2d$} if its pullback by any geometric point of $S$ has degree $2d$, i.e.\ its self intersection number is $2d$.

\end{enumerate}
\end{definition}

\begin{remark}
Let $F$ be a subfield of $\C$. For a K3 surface $X$ over $F$, the relative Picard functor $\Pic_{X/F}$ is represented by a scheme, thus $\Pic_{X/F}(F)= \Pic(X_{\overline{F}})^{\Gal(\overline{F}/F)}$ is a primitive sublattice in $H^{2}(X(\C), \Z(1))$ via the Chern class map as in \cite[Lemma 2.2.3]{She2017}. Hence there exists a primitive polarization for each $X$ (by dividing a polarization by an integer greater than $1$ if necessary). Note that the inclusion $\Pic(X) \subset \Pic_{X/F}(F)$ may be proper in general, though it always has a finite cokernel (see \cite[Chapter 17, Section 2.2]{Huybrechts2016}).
\end{remark}

\begin{definition}\label{lattice}
\begin{enumerate}
\item 
A \emph{K3 lattice $\la_{K3}$} is a unimodular lattice\footnote{In this paper, a ($\Z$-)lattice means a finite free $\Z$-module with a symmetric bilinear pairing valued in $\Z$, and $\langle c \rangle$ means the $\Z$-lattice of rank 1 given by $(a,b)= cab$.} of signature $(19,3)$ which is defined as
\begin{align*}
\la_{K3}\coloneqq \mathbb{E}_{8}^{\oplus2} \oplus \mathbb{H}^{\oplus3},
\end{align*}
where $\mathbb{E}_{8}$ is the (positive signature) $E_{8}$-lattice as in \cite[Chapter 14, Example 0.3]{Huybrechts2016}, and $\mathbb{H}$ is the hyperbolic plane. 

\item
Consider the last component $\mathbb{H}\subset \la_{K3}$, and take $e,f \in \mathbb{H} \subset \la_{K3}$ satisfying 
\[
(e,f)=(f,e)=1,\quad (e,e)=(f,f)=0.
\]
Let $v_{d}\coloneqq e-df$. Then the \emph{degree $2d$ primitive part of $\la_{K3}$} is defined as
\[
\la_{d}\coloneqq v_{d}^{\perp} \simeq \mathbb{E}_8^{2} \oplus \mathbb{H}^{2} \oplus \langle 2d\rangle.
\] 
The lattice $\la_{d}$ is a primitive sublattice of $\la_{K3}$, and $\disc(\la_{d})=2d$.
\end{enumerate}
\end{definition}

\begin{remark}[{\cite[Remark 2.3.2]{Rizov2006}}]\label{remlattice}
For a K3 surface $X$ over $\C$ and its primitive polarization $L$ of degree $2d$,
there exists an isomorphism 
\begin{align*}
    (H^{2}(X(\C),\Z(1)),-\cup) \simeq \la_{K3}
\end{align*}
which sends $\ch_{\Z}(L)$ to $v_{d}$. 
Here $-\cup$ denotes the minus of the cup product.
Therefore, for a primitively polarized K3 surface $(X,L)$ of degree $2d$ over a field $F$ which is contained in $\C$, we sometimes identify $H^{2}(X(\C),\Z(1))$ with $\la_{K3}$, $H^{2}_{\et}(X_{\overline{F}},\widehat{\Z}(1))$ with $\la_{K3,{\widehat{\Z}}}\coloneqq \la_{K3} \otimes_{\Z}\widehat{\Z}$,
$P^{2}((X(\C),L_{\C}),\Z(1))$ with $\la_{d}$,
and $P^{2}_{\et}((X_{\overline{F}},L_{\overline{F}}),\widehat{\Z}(1))$ with $\la_{d,\widehat{\Z}}\coloneqq \la_{d}\otimes_{\Z}\widehat{\Z}$. 
Here, we denote the primitive parts of the singular cohomology group and the $\et$ale cohomology group by
\begin{align*}
P^{2}((X(\C),L_{\C}),\Z(1)) &\coloneqq \ch (L_{\C})^{\perp} \subset H^{2}(X(\C),\Z),\\
P^{2}_{\et}((X_{\overline{F}},L_{\overline{F}}),\widehat{\Z}(1)) &\coloneqq \ch_{\widehat{\Z}}(L)^{\perp}\subset H^{2}_{\et}(X_{\overline{F}},\widehat{\Z}(1)).
\end{align*}
To simplify the notation, in the following of this paper, we omit the pairing. We denote $(H^2(X(\C),\Z(1)), -\cup)$ by $H^{2}(X(\C),\Z(1))$, and same with others.

\end{remark}

\begin{definition}\label{discriminantkernel}
The \emph{discriminant kernel of $\la_{d}$} is 
\[
D_{d} \coloneqq \{g\in \SO(\la_{d, \widehat{\Z}}) \mid g \textup{ acts trivially on } \la_{d,\widehat{\Z}}^{\vee}/\la_{d,\widehat{\Z}} \} .
\]
Note that $D_{d}$ is a compact open subgroup of $\SO(\la_{d,\A_{f}}).$ For any prime number $\ell$, we denote its $\Z_{\ell}$-component by $(D_{d})_{\ell}$.
\end{definition}

\begin{prop}[{\cite[Lemma 2.6]{MadapusiPera2016}}]\label{disc} 
There is a natural identification
\[
D_{d} =
\{\tilde{g}\in \SO(\la_{K3, \widehat{\Z}}) \mid \tilde{g}(v_{d})= v_{d} \}.
\]
\end{prop}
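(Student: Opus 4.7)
My plan is to exhibit the identification as the restriction map $\tilde g \mapsto \tilde g|_{\la_{d,\widehat{\Z}}}$, with inverse given by extension by the identity on $\widehat{\Z} v_d$. The whole argument hinges on understanding the ``glue'' structure between $\la_{K3}$ and $\la_d \oplus \Z v_d$ that is forced by the unimodularity of $\la_{K3}$.

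\textbf{First step: set up the glue.} Since $v_d$ is primitive with $(v_d, v_d) = 2d$, the sublattice $\la_d \oplus \Z v_d$ has index $2d$ in $\la_{K3}$, as a comparison of discriminants ($1$ versus $(2d)\cdot(2d)$) shows. Dually, $\la_{K3}$ sits inside $(\la_d \oplus \Z v_d)^\vee = \la_d^\vee \oplus \tfrac{1}{2d}\Z v_d$. I would verify that the image of the quotient $H \coloneqq \la_{K3}/(\la_d \oplus \Z v_d)$ in the discriminant group $\la_d^\vee/\la_d \oplus \tfrac{1}{2d}\Z/\Z$ is precisely the graph of an isomorphism $\phi\colon \la_d^\vee/\la_d \xrightarrow{\sim} \tfrac{1}{2d}\Z/\Z$. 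Surjectivity of the projection $H \to \tfrac{1}{2d}\Z/\Z$ follows from the primitivity of $v_d$ (pick $w \in \la_{K3}$ with $(w,v_d)=1$), while injectivity follows from the identity $\la_{K3} \cap (\la_d\otimes\Q) = \la_d$; an order count ($|H|=2d=|\tfrac{1}{2d}\Z/\Z|=|\la_d^\vee/\la_d|$) then forces bijectivity on both factors. This entire picture base-changes to $\widehat{\Z}$.

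\textbf{Second step: verify both inclusions.} Given $\tilde g \in \SO(\la_{K3,\widehat{\Z}})$ fixing $v_d$, it preserves $v_d^\perp = \la_{d,\widehat{\Z}}$, and the restriction $g$ has $\det g = \det\tilde g = 1$ since the complementary factor $\widehat{\Z} v_d$ is fixed. Moreover $\tilde g$ preserves $\la_{K3,\widehat{\Z}}$ and hence the glue subgroup, while acting trivially on the $\tfrac{1}{2d}\Z/\Z$ factor; by compatibility with $\phi$, it must therefore act trivially on $\la_{d,\widehat{\Z}}^\vee/\la_{d,\widehat{\Z}}$, so $g \in D_d$. Conversely, given $g \in D_d$, set $\tilde g \coloneqq g \oplus \Id$ on $\la_{d,\widehat{\Z}} \oplus \widehat{\Z} v_d$ and extend $\widehat{\Q}$-linearly to $\la_{K3,\widehat{\Z}} \otimes_{\widehat{\Z}}\widehat{\Q}$. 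Triviality of $g$ on the discriminant together with triviality of $\Id$ on $\tfrac{1}{2d}\Z/\Z$ makes $\tilde g$ preserve the glue subgroup, hence the lattice $\la_{K3,\widehat{\Z}}$ itself; and $\det \tilde g = \det g = 1$. The two constructions are manifestly mutually inverse.

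\textbf{Main obstacle.} The crucial and essentially only nontrivial point is identifying $H$ as the graph of an isomorphism of discriminant groups, since this is what allows triviality of the action on one factor to transfer to the other. Once this is pinned down, both the restriction and extension maps are transparent, and the identification with $D_d$ is immediate.
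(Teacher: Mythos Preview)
Your proof is correct and follows essentially the same approach as the paper's: both construct mutually inverse maps by restriction and by extension-by-identity, and both hinge on the embedding $\la_{K3,\widehat{\Z}}\hookrightarrow \la_{d,\widehat{\Z}}^{\vee}\oplus\langle v_d\rangle^{\vee}$ together with primitivity of $\la_d$ and triviality on the discriminant. The only difference is packaging---you phrase the key step via the Nikulin-style ``glue'' (identifying $\la_{K3}/(\la_d\oplus\Z v_d)$ with the graph of an isomorphism of discriminant groups), whereas the paper carries out the same verification by a direct computation $\tilde g(u_1+u_2)=(g(u_1)-u_1)+(u_1+u_2)$; note also that $(v_d,v_d)=-2d$ rather than $2d$, though this sign does not affect your index or discriminant calculations.
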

\begin{proof}
This is proved in \cite[Lemma 2.6]{MadapusiPera2016}. We include its proof because we need to recall the identification explicitly. 
Let $\ell$ be any prime number, and we will verify this claim for each $\Z_{\ell}$-component.
First, we will define a map from the left-hand side to the right-hand side. 
For 
\[
g_{\ell} \in (D_{d})_{\ell}= \{g\in \SO(\la_{d,\Z_{\ell}})\mid g \textup{ acts trivially on }\la_{d,\Z_{\ell}}^{\vee}/\la_{d,\Z_{\ell}}\},
\]
define $\tilde{g}_{\ell}$ as the image of $g_{\ell}$ via the composition of the following
\[
\SO(\la_{d, \Z_{\ell}})\hookrightarrow \SO(\la_{d, \Q_{\ell}}) \hookrightarrow \SO(\la_{K3, \Q_{\ell}}).
\]
Then we have $\tilde{g}_{\ell} v_{d}= v_{d}$. 
We will show that $\tilde{g}_{\ell} \la_{K3,\Z_{\ell}}= \la_{K3,\Z_{\ell}}$. Consider the morphisms 
\begin{align}
\la_{K3,\Z_{\ell}} \simeq \la_{K3,\Z_{\ell}}^{\vee}
\hookrightarrow \la_{d,\Z_{\ell}}^{\vee} \oplus \langle v_{d} \rangle^{\vee} \twoheadrightarrow \la_{d, \Z_{\ell}}^{\vee}.
\end{align}
For any $v\in \la_{K3,\Z_{\ell}}$, denote its image in $\la_{d,\Z_{\ell}}^{\vee} \oplus \langle v_{d}\rangle^{\vee}$ by $u_{1}+ u_{2}$. Then we have
\[
\tilde{g}_{\ell}(u_{1}+u_{2})=g_{\ell}(u_{1})+u_{2}=(g_{\ell}(u_{1})-u_{1}) + (u_{1}+u_{2}) \in \la_{K3, \Z_{\ell}},
\]
because $g_{\ell}$ acts trivially on $\la_{d,\Z_{\ell}}^{\vee}/\la_{d,\Z_{\ell}}$. Hence $\tilde{g}_{\ell}^{\pm1}\la_{K3,\Z_{\ell}}\subset \la_{K3,\Z_{\ell}}$, thus $\tilde{g}_{\ell} \la_{K3, \Z_{\ell}}=\la_{K3, \Z_{\ell}}$, and we can define the desired map.

Next, we will define a map from the right-hand side to the left-hand side. For $\tilde{h}_{\ell}\in \SO(\la_{K3,\Z_{\ell}})$ such that $\tilde{h}_{\ell}v_{d}=v_{d}$, we can associate $h_{\ell}\in \SO(\la_{d, \Z_{\ell}})$ as the restriction of $\tilde{h}_{\ell}$. We can show that $h_{\ell}$ acts trivially on $\la_{d, \Z_{\ell}}^{\vee}/\la_{d, \Z_{\ell}}$. Indeed, because the embedding $\la_{d}\hookrightarrow \la_{K3}$ is primitive, the composition of $(1)$ is surjective, so for any $u_{1} \in \la_{d,\Z_{\ell}}^{\vee},$ there exists $u_{2} \in \langle v_{d} \rangle^{\vee}$ such that $u_{1}+u_{2}\in \la_{K3,\Z_{\ell}}.$ 
Thus we have
\[
h_{\ell}(u_{1})-u_{1}= \tilde{h}_{\ell}(u_{1}+u_{2})-(u_{1}+u_{2})\in \la_{K3,\Z_{\ell}}\cap \la_{d,\Q_{\ell}}=\la_{d,\Z_{\ell}}.
\]

Clearly, the above maps are inverses of each other, so it finishes the proof.
\end{proof}

\subsection{Moduli spaces of K3 surfaces and the Torelli theorem}
In this subsection, we recall the Torelli theorem in terms of moduli spaces.
First, we recall the definition of the moduli space of K3 surfaces with oriented level structures. See \cite[Section 6]{Rizov2006}, \cite[Section 3]{MadapusiPera2015}, \cite[Section 5]{Ito2018} for details.

We define a groupoid-valued moduli functor $M_{2d,\Q}^{\circ}$ by
\[
M_{2d,\Q}^{\circ}(S) \coloneqq
\left\{ 
(\pi\colon \mathcal{X} \rightarrow S, \lambda \in \Pic_{X/S}(S)) 
\left| 
\begin{array}{l}
\textup{$\pi \colon$K3 family over $S$,}\\
\textup{$\lambda \colon$primitive polarization of degree $2d$}
\end{array}
\right.\right\}
\]
for any $\Q$-scheme $S$.
Let $\widetilde{M}_{2d,\Q}^{\circ}$ be the twofold finite $\e$tale cover constructed by Madapusi Pera (\cite[Section 5]{MadapusiPera2015}) which parameterizes orientations.
Then, for any $S \rightarrow \widetilde{M}_{2d,\Q}^{\circ}$ we get $(\pi, \lambda, \nu)$ where $(\pi, \lambda)$ is as above, and $\nu$ is an isometry of $\widehat{\Z}$-local systems 
\[
\nu \colon
\underline{\det\la_{d, \widehat{\Z}}} \simeq \det P^{2}\pi_{\ast}\widehat{\Z}
\]
such that for any $s\in S(\C)$, the isometry $\nu$ restricts to an isometry
\begin{align*}
\nu_{s} \colon \det \la_{d} \simeq \det P^{2}(\mathcal{X}_{s}, \Z).
\end{align*}
Here, we put
\[
P^2\pi_{\ast}\widehat{\Z}(1) \coloneqq \ch_{\widehat{\Z}}(\lambda)^{\perp} \subset R^2\pi_{\ast}\widehat{\Z}(1),
\] 
where $\ch_{\widehat{\Z}}(\lambda)$ is the Chern class of $\lambda$ (\cite[3.10]{MadapusiPera2015}). 
Let $\K\subset D_{d}$ be a compact open subgroup. 
For any scheme $S$ over $\widetilde{M}_{2d,\Q}^{\circ}$, 
one can define the $\e$tale sheaf $I$ by 
\[
I(T) \coloneqq
\left\{ g\colon \underline{\la_{K3, \widehat{\Z}}} \rightarrow R^{2}\pi|_{T \ast}\widehat{\Z}(1) \left|
\begin{array}{l}
g\colon \textup{isometry, }\\
g(v_{d})= \ch_{\widehat{\Z}}(\lambda),\\
\textup{$\det g$ induces $\nu|_{T}$}
\end{array}
\right.\right\},
\]
for any $\e$tale morphism $T \rightarrow S$. 
A $\K$-level structure on $S \rightarrow \widetilde{M}_{2d,\Q}^{\circ}$ is a section $\alpha \in H^{0}(S, I/\K)$, where $\K$ acts on $I$ through $\la_{K3,\widehat{\Z}}$. Then, one can define the moduli functor $M_{2d,\K,\Q}^{\circ}$ over $\widetilde{M}_{2d,\Q}^{\circ}$ which parameterizes $\K$-level structures. 
For simplicity, we write an each element of $M_{2d,\K,\Q}^{\circ}(S)$ as $(\mathcal{X}, \lambda, \nu, \alpha)$.
Moreover, for any field $F$ of characteristic $0$, we denote the base change by $M_{2d,\K,F}^{\circ}$.

\begin{definition}\phantomsection\label{SO}
\begin{enumerate}
\item
$\SO_{\la_{d}}$ is an algebraic group over $\Q$ whose $R$-valued points are given by
\[
\SO_{\la_{d}}(R)\coloneqq \{ g\in \SL(\la_{d,R})\mid (gv, gw)=(v, w), \textup{ for any } v, w \in \la_{d,R}\}.
\]

\item
We put
\[
\Omega^{\pm}_{\SO_{\la_{d}}}\coloneqq 
\{
\textup{oriented negative definite planes in $\la_{d,\mathbb{R}}$}
\}.
\]
Then $\Omega^{\pm}_{\SO_{\la_{d}}}$ is naturally identified with $X_{\SO_{\la_{d}}}$ which gives the Shimura datum $(\SO_{\la_{d}}, X_{\SO_{\la_{d}}})$ with a reflex field $\Q$.
Actually, $X_{\SO_{\la_{d}}}$ is isomorphic to $X_{\GSpin_{\la_{d}}}$ which is defined as in Definition \ref{shimurala} via the adjoint representation.
\end{enumerate}
\end{definition}

Here, we quickly state the moduli interpretation of the Torelli theorem over $\Q$.

\begin{prop}[{The Torelli theorem,
\cite[Corollary 5.4, Theorem 5.8]{MadapusiPera2015}}]\label{Torelli}
Let $\K\subset D_{d}$ be a compact open subgroup. 
Moreover, assume that $\K$ is contained in the principal level $n$ congruence subgroup of $\SO(\la_{d,\widehat{\Z}})$ with $n \geq 3$.
Then $M_{2d,\K,\Q}^{\circ}$ is representable by a scheme,
and moreover there is the period map which is an $\e$tale morphism between $\Q$-schemes
\[
j\colon M_{2d,\K,\Q}^{\circ}  \rightarrow \Sh_{\mathbb{K}}(\SO_{\la_{d}}, X_{\SO_{\la_{d}}}).
\]
Here $\Sh_{\mathbb{K}}(\SO_{\la_{d}}, X_{\SO_{\la_{d}}})$ is the canonical model of the Shimura variety over $\Q$.
\end{prop}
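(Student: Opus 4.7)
The plan is to establish three things in the order stated: representability by a scheme, existence of the period morphism over $\Q$, and \'etaleness. Throughout, I would follow the strategy pioneered by Madapusi Pera, combining classical complex-analytic Torelli with the theory of canonical models of Shimura varieties.

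For representability, the key input is that the level-$n$ condition with $n\ge 3$ rigidifies the moduli problem. A polarized K3 surface has finite automorphism group, and any automorphism $\phi$ of $(\mathcal{X},\lambda,\nu,\alpha)$ acts on the local system $R^{2}\pi_{\ast}\widehat{\Z}(1)$; preserving $\alpha$ forces $\phi^{\ast}$ to lie in $\K$, which in turn means that via the trivialization $\phi^{\ast}$ is a torsion element of $\SO(\la_{d,\widehat{\Z}})$ congruent to the identity mod $n$. By Serre's lemma, $\phi^{\ast}$ acts trivially on $P^{2}_{\et}(X,\widehat{\Z}(1))$, hence on all of $H^{2}_{\et}$ (since $\phi^{\ast}\lambda = \lambda$), and then the strong Torelli theorem for polarized K3 surfaces gives $\phi = \Id$. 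Thus the moduli stack is equivalent to a sheaf. Combined with boundedness (a suitable multiple of $\lambda$ gives a uniform projective embedding), this realizes $M^{\circ}_{2d,\K,\Q}$ as a locally closed subscheme of a Hilbert scheme, hence as a quasi-projective $\Q$-scheme.

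For the period map I would work over $\C$ first. Given a $\C$-point $(\mathcal{X},\lambda,\nu,\alpha)$, the primitive Hodge structure on $P^{2}((X(\C),L_{\C}),\Z(1))$ of signature $(2,19)$ together with $\alpha$ produces, after lifting to $I$, an oriented negative definite plane in $\la_{d,\R}$, i.e.\ a point of $\Omega^{\pm}_{\SO_{\la_{d}}}$; coupling this Hodge datum with $\alpha$ yields a point of $\Sh_{\K}(\SO_{\la_{d}},X_{\SO_{\la_{d}}})(\C)$, and these constructions patch into an analytic morphism $j_{\C}$ on $M^{\circ}_{2d,\K,\C}$. To descend $j_{\C}$ to $\Q$, I would invoke the characterization of the canonical model by its action on CM points and reciprocity laws. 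The cleanest route, which is the one Madapusi Pera takes, is to pass to a cover on which the Kuga--Satake construction produces an absolute Hodge cycle giving an algebraic morphism to a Siegel modular variety; canonicity is classical there, and the $\SO_{\la_{d}}$-Shimura variety then inherits its canonical model structure, so $j_{\C}$ is the analytification of a morphism $j$ of $\Q$-schemes.

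Finally, \'etaleness is a local statement, for which it suffices to check $j_{\C}$ is \'etale, which is the classical local Torelli theorem for K3 surfaces: at $x$ corresponding to $(X,\lambda,\dots)$, the tangent space to the moduli is $H^{1}(X,T_{X})$, the tangent space to the period domain is $\Hom(H^{2,0}(X),P^{1,1}(X))$, and the differential of $j$ is an isomorphism via the trivialization $\Omega^{2}_{X/\C}\simeq \mathcal O_{X}$ together with the Lefschetz decomposition. The main obstacle I expect is the descent to $\Q$: showing that $j_{\C}$ is algebraic and lands in the canonical $\Q$-model. This is the real content of Madapusi Pera's theorem, and it is here that one either grinds through CM-point reciprocity or, more efficiently, bootstraps from the Siegel case via Kuga--Satake. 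Representability and \'etaleness are essentially formal given the appropriate rigidification and infinitesimal Torelli.
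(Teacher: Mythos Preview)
The paper does not give its own proof of this proposition: it is stated with the citation \cite[Corollary~5.4, Theorem~5.8]{MadapusiPera2015} and used as a black box, with the subsequent sentence simply announcing that more detailed properties of $j$ will be invoked later (in Proposition~\ref{uks}). So there is nothing in the paper to compare your argument against beyond the bare reference.

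That said, your sketch is a faithful outline of the Rizov--Madapusi Pera approach that the citation points to: rigidification via Serre's lemma for representability, the complex-analytic period map followed by descent to $\Q$ through the Kuga--Satake morphism to a Siegel variety (where canonicity is classical), and local Torelli for \'etaleness. You correctly flag the descent step as the substantive one. Two small cautions if you were to flesh this out: first, representability as an \emph{algebraic space} is what falls out of the rigidity argument, and one then needs an extra step (quasi-projectivity via the Baily--Borel or GIT picture, or the \'etaleness over the Shimura variety itself) to conclude it is a scheme; second, in this paper's sign convention $\la_{d}$ has signature $(19,2)$ rather than $(2,19)$, though this is purely cosmetic.
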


In Proposition \ref{uks}, we will use the more detailed properties of the period map $j$.

\section{The uniform Kuga--Satake construction}
In this section, we recall the definition and properties of the Kuga--Satake construction. 
In this section, we use only the uniform Kuga--Satake construction introduced by She. 
In fact, the classical Kuga--Satake construction is enough for proving the polarized case (Theorem \ref{genp}), but we need She's methods to prove the unpolarized case (Theorem \ref{genunp}). Hence we omit the classical Kuga--Satake construction for avoiding some repetitions.

\subsection{Preparation I}
In this and next subsection, we will define several algebraic groups and their adelic subgroups which play an important role in the Kuga--Satake construction. In this subsection, we discuss objects related with the lattice $\la_{d}.$

For any algebra $R$ and any quadratic space $\mathcal{N}$ over $R$, we denote the Clifford algebra (resp.\ even Clifford algebra) of $\mathcal{N}$ by $C(\mathcal{N})$ (resp.\ $C^{+}(\mathcal{N})$). 
\begin{definition}\label{shimura}
$\GSpin_{\la_{d}}$ is an algebraic group over $\Q$, whose $R$-valued points are given by
\[
\GSpin_{\la_{d}}(R) \coloneqq \{z\in C^{+}(\la_{d, R})^{\times} \mid z\la_{d, R}z^{-1}= \la_{d, R}\}.
\]

\end{definition}

\begin{remark}\phantomsection\label{morp}
\begin{enumerate}
\item
There exists the following natural homomorphism of algebraic groups over $\Q$
\[
f_{d} \colon \GSpin_{\la_{d}} \rightarrow \SO_{\la_{d}} ; g \mapsto (l \mapsto glg^{-1}).
\]

\item 
For any $\Z$-algebra $R$, we put 
\[
\GSpin(\la_{d,R})\coloneqq \{z \in C^{+}(\la_{d,R})^{\times}\mid z\la_{d,R}z^{-1}=\la_{d,R}\}.
\]
Then, for any prime number $\ell$, we can define 
\[
f_{d}\colon \GSpin(\la_{d,\Z_{\ell}})\rightarrow \SO(\la_{d,\Z_{\ell}})
\] 
by the conjugation.
Moreover, it is easy to confirm the folowing identity
\[
\GSpin(\la_{d,\Z_{\ell}})=\GSpin(\la_{d,\Q_{\ell}})\cap C^{+}(\la_{d, \Z_{\ell}})^{\times}.
\]

\item
For any $\Z$-algebra $R$, we will use the notation $\GSpin(\la_{K3,R})$ in the similar sense as in (2). Moreover, for any prime number $\ell$, we denote the conjugation map $\GSpin(\la_{K3,\Z_{\ell}})\rightarrow \SO(\la_{K3,\Z_{\ell}})$ by $f_{K3}$. As in (2), it follows that 
\[
\GSpin(\la_{K3,\Z_{\ell}})= \GSpin(\la_{K3,\Q_{\ell}})\cap C^{+}(\la_{K3,\Z_{\ell}})^{\times}.
\]
\end{enumerate}
\end{remark} 

\begin{lemma}[{\cite[(2.6.1)]{MadapusiPera2016}}]\label{cliffordlemma}
Let $\ell$ be any prime number.
Through the natural inclusion $C^{+}(\la_{d,\Z_{\ell}})\subset C^{+}(\la_{K3,\Z_{\ell}})$, we have
\[
C^{+}(\la_{d,\Z_{\ell}})=\{z\in C^{+}(\la_{K3,\Z_{\ell}})\mid v_{d}z=z v_{d} \}.
\] 
Moreover, the above inclusion induces an embedding
\[
\GSpin(\la_{d,\Z_{\ell}})\subset \GSpin(\la_{K3,\Z_{\ell}}).
\]
\end{lemma}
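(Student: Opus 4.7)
The plan is to first establish the $\Q_{\ell}$-version of the identity and then descend to $\Z_{\ell}$ using that $\la_{d}\subset\la_{K3}$ is a primitive sublattice. Over $\Q_{\ell}$ the orthogonal decomposition $\la_{K3,\Q_{\ell}}=\la_{d,\Q_{\ell}}\oplus \Q_{\ell}v_{d}$ yields the graded tensor product decomposition $C(\la_{K3,\Q_{\ell}})\simeq C(\la_{d,\Q_{\ell}})\mathbin{\widehat{\otimes}} C(\Q_{\ell}v_{d})$, so every $z\in C^{+}(\la_{K3,\Q_{\ell}})$ can be written uniquely as $z=a+bv_{d}$ with $a\in C^{+}(\la_{d,\Q_{\ell}})$ and $b\in C^{-}(\la_{d,\Q_{\ell}})$. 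Because $v_{d}$ anticommutes with every vector of $\la_{d,\Q_{\ell}}$, it commutes with $a$ and anticommutes with $b$, which gives the commutator identity $v_{d}z-zv_{d}=-2v_{d}^{2}b$. Since $v_{d}^{2}$ is a nonzero scalar in $\Q_{\ell}$, the commutator vanishes iff $b=0$, which proves $C^{+}(\la_{d,\Q_{\ell}})=\{z\in C^{+}(\la_{K3,\Q_{\ell}}):v_{d}z=zv_{d}\}$.

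For the integral statement, the key observation is that $v_{d}\in \la_{K3}$ is primitive (its coordinates are coprime), so $\la_{d}=v_{d}^{\perp}$ is a primitive sublattice, and hence $\la_{d,\Z_{\ell}}$ is a $\Z_{\ell}$-direct summand of $\la_{K3,\Z_{\ell}}$. Extending a $\Z_{\ell}$-basis of $\la_{d,\Z_{\ell}}$ to one of $\la_{K3,\Z_{\ell}}$ and using the exterior-algebra model of the underlying free $\Z_{\ell}$-module of the Clifford algebra, $C(\la_{d,\Z_{\ell}})$ becomes a $\Z_{\ell}$-direct summand of $C(\la_{K3,\Z_{\ell}})$. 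In particular $C(\la_{d,\Z_{\ell}})=C(\la_{d,\Q_{\ell}})\cap C(\la_{K3,\Z_{\ell}})$, and intersecting with $C^{+}$ gives $C^{+}(\la_{d,\Z_{\ell}})=C^{+}(\la_{d,\Q_{\ell}})\cap C^{+}(\la_{K3,\Z_{\ell}})$. Combined with the $\Q_{\ell}$ identity, this yields the first claim of the lemma.

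For the $\GSpin$ embedding, given $z\in \GSpin(\la_{d,\Z_{\ell}})$, its inverse in $C^{+}(\la_{d,\Z_{\ell}})$ is also an inverse in $C^{+}(\la_{K3,\Z_{\ell}})$, so $z\in C^{+}(\la_{K3,\Z_{\ell}})^{\times}$. For $v\in \la_{K3,\Z_{\ell}}$, decompose $v=v'+c\,v_{d}$ in $\la_{K3,\Q_{\ell}}$ with $v'\in\la_{d,\Q_{\ell}}$ and $c\in\Q_{\ell}$; since $z\la_{d,\Q_{\ell}}z^{-1}=\la_{d,\Q_{\ell}}$ and (by the first part) $zv_{d}z^{-1}=v_{d}$, conjugation gives $zvz^{-1}\in\la_{K3,\Q_{\ell}}$. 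On the other hand $zvz^{-1}\in C(\la_{K3,\Z_{\ell}})$, and the same direct-summand argument applied to $\la_{K3}\subset C(\la_{K3})$ gives $\la_{K3,\Q_{\ell}}\cap C(\la_{K3,\Z_{\ell}})=\la_{K3,\Z_{\ell}}$. Hence $z\la_{K3,\Z_{\ell}}z^{-1}\subset \la_{K3,\Z_{\ell}}$, and applying the same reasoning to $z^{-1}$ yields equality, so $z\in \GSpin(\la_{K3,\Z_{\ell}})$.

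The main obstacle is the integral descent at primes $\ell\mid 2d$, where $\la_{d,\Z_{\ell}}\oplus \Z_{\ell}v_{d}$ is strictly smaller than $\la_{K3,\Z_{\ell}}$ so one cannot use the orthogonal decomposition integrally. My approach sidesteps this by exploiting only the weaker fact that $\la_{d,\Z_{\ell}}$ is a $\Z_{\ell}$-direct summand of $\la_{K3,\Z_{\ell}}$ (which follows from the primitivity of $v_{d}$), and using this \emph{purity} to reduce every integral statement to its $\Q_{\ell}$ counterpart.
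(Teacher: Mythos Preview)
Your proof is correct and follows essentially the same strategy as the paper's: both reduce the first claim to $\Q_{\ell}$ by observing that $C^{+}(\la_{d,\Z_{\ell}})$ is a saturated $\Z_{\ell}$-submodule of $C^{+}(\la_{K3,\Z_{\ell}})$ (the paper phrases this as ``both sides are primitive''), and then verify the $\Q_{\ell}$-identity using the orthogonal splitting $\la_{K3,\Q_{\ell}}=\la_{d,\Q_{\ell}}\oplus \Q_{\ell}v_{d}$. Your explicit commutator computation $v_{d}z-zv_{d}=-2v_{d}^{2}b$ makes precise what the paper leaves as ``easily verified by using a basis.''

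For the $\GSpin$ embedding there is a small but genuine difference in execution. The paper invokes the identities $\GSpin(\la_{d,\Z_{\ell}})=\GSpin(\la_{d,\Q_{\ell}})\cap C^{+}(\la_{d,\Z_{\ell}})^{\times}$ and $\GSpin(\la_{K3,\Z_{\ell}})=\GSpin(\la_{K3,\Q_{\ell}})\cap C^{+}(\la_{K3,\Z_{\ell}})^{\times}$ (Remark~\ref{morp}), which immediately reduces the integral inclusion to the obvious rational one. You instead verify directly that conjugation by $z$ preserves $\la_{K3,\Z_{\ell}}$, using that $\la_{K3,\Z_{\ell}}$ is saturated in $C(\la_{K3,\Z_{\ell}})$. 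Both are short; the paper's route is slightly more conceptual (it isolates a reusable characterization of $\GSpin$ over $\Z_{\ell}$), while yours is self-contained and avoids the auxiliary remark.
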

\begin{proof}
The first claim is essentially proved in \cite[(2.6.1)]{MadapusiPera2016}. For the sake of completeness, we recall the proof.
For the first claim, both sides of the desired identity are primitive $\Z_{\ell}$-modules in $C^{+}(\la_{K3,\Z_{\ell}})$. Thus, it is enough to show that
\[
C^{+}(\la_{d,\Q_{\ell}})=\{z \in C^{+}(\la_{K3,\Q_{\ell}})\mid v_{d}z=z v_{d}\}.
\]
It can be easily verified by using a basis of $\la_{K3,\Q_{\ell}}$ which is given by a basis of $\la_{d,\Q_{\ell}}$ and $v_{d}$.
For the second claim, by Remark \ref{morp} (2) and (3), we can reduce the problem to the obvious inclusion
$\GSpin(\la_{d,\Q_{\ell}})\subset \GSpin(\la_{K3,\Q_{\ell}}).$ 
\end{proof}

\begin{definition}[{\cite[Section 4.4]{Andre1996}, \cite[Example 5.1.4]{Rizov2006}}] 
For any positive integer $n$, we define a compact open subgroup $\K_{d,n}^{\spp} \subset \GSpin_{\la_{d}}(\A_{f})$ 
by
\[
\K_{d,n}^{\spp} \coloneqq \{g \in \GSpin(\la_{d,\widehat{\Z}}) \mid
g=1 \textup{ in } C^{+}(\la_{d,\widehat{\Z}/n\widehat{\Z}})\}.
\]
\end{definition}

\begin{prop}[{cf.\ \cite[Section 4.4]{Andre1996}, \cite[Section 4.4]{MadapusiPera2015}}]\label{openness}
\[
D_{d}(n) \coloneqq f_{d}(\mathbb{K}_{d,n}^{\spp})\subset \SO(\la_{d,\widehat{\Z}}) 
\]
is a compact open subgroup of $D_{d}$.
\end{prop}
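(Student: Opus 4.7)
My plan is to split the claim into two assertions: (i) $D_{d}(n) \subset D_{d}$, and (ii) $D_{d}(n)$ is compact and open in $D_{d}$. For (ii) I will use that $D_{d}$ is a profinite group, so it suffices to show $D_{d}(n)$ is closed and of finite index.

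For (i), I would combine Lemma \ref{cliffordlemma} with Proposition \ref{disc}. Given $g \in \K_{d,n}^{\spp} \subset \GSpin(\la_{d,\widehat{\Z}})$, Lemma \ref{cliffordlemma} both places $g$ in $\GSpin(\la_{K3,\widehat{\Z}})$ and guarantees $g v_{d} = v_{d} g$. Consequently $f_{K3}(g) \in \SO(\la_{K3,\widehat{\Z}})$ fixes $v_{d}$, and under the identification of Proposition \ref{disc}—whose proof shows that the element of $D_{d}$ corresponding to $f_{K3}(g)$ is its restriction to $\la_{d,\widehat{\Z}}$, which is precisely $f_{d}(g)$—this yields $f_{d}(g) \in D_{d}$.

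For (ii), $\K_{d,n}^{\spp}$ is closed, hence compact, in the profinite group $\GSpin(\la_{d,\widehat{\Z}})$, so its continuous image $D_{d}(n)$ is compact and therefore closed in $D_{d}$. For finite index I would proceed in two steps. First, $\K_{d,n}^{\spp}$ has finite index in $\GSpin(\la_{d,\widehat{\Z}})$, since it is the kernel of the reduction-mod-$n$ homomorphism into the finite group $C^{+}(\la_{d,\Z/n\Z})^{\times}$; consequently $D_{d}(n)$ has finite index in $f_{d}(\GSpin(\la_{d,\widehat{\Z}}))$. Second, I would verify that $f_{d}(\GSpin(\la_{d,\widehat{\Z}}))$ has finite index in $D_{d}$ prime by prime. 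At every $p \nmid 2d$ the lattice $\la_{d,\Z_{p}}$ is self-dual, so $D_{d,p} = \SO(\la_{d,\Z_{p}})$, and the smooth reductive exact sequence $1 \to \Gm \to \GSpin_{\la_{d}} \to \SO_{\la_{d}} \to 1$ combined with the vanishing of $H^{1}(\Z_{p},\Gm)$ forces $f_{d}(\GSpin(\la_{d,\Z_{p}})) = \SO(\la_{d,\Z_{p}}) = D_{d,p}$. At the finitely many remaining primes $p \mid 2d$, the differential of $f_{d}$ over $\Q_{p}$ is surjective (its kernel being the central $\Gm$), so the implicit function theorem for $p$-adic analytic groups makes $f_{d}(\GSpin(\la_{d,\Z_{p}}))$ an open, and hence finite index, subgroup of the compact group $\SO(\la_{d,\Z_{p}}) \supset D_{d,p}$.

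The main obstacle will be handling the finitely many bad primes, where the integral group scheme $\GSpin_{\la_{d}}$ need not be smooth over $\Z_{p}$ and so a direct Hensel lifting argument is unavailable; the strategy there is to drop to the $\Q_{p}$-Lie group level and rely only on the surjectivity of the differential of the central isogeny $\GSpin \to \SO$ to ensure local openness of $f_{d}$.
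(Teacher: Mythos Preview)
Your argument is correct, and part (i) coincides with the paper's proof. For part (ii), however, the paper argues differently: rather than bounding the index, it shows directly that for every prime $\ell \nmid 2dn$ one has $(D_{d}(n))_{\ell} = \SO(\la_{d,\Z_{\ell}})$, citing \cite[Section 4.4]{Andre1996} for the surjectivity of $f_{d}$ on $\Z_{\ell}$-points in that range; openness in the restricted product is then immediate. Your route---finite index of $\K_{d,n}^{\spp}$ in $\GSpin(\la_{d,\widehat{\Z}})$, then finite index of $f_{d}(\GSpin(\la_{d,\widehat{\Z}}))$ in $D_{d}$ via fppf cohomology at good primes and the $p$-adic submersion argument at bad primes---is more self-contained and avoids invoking Andr\'e's result as a black box, at the cost of importing slightly heavier tools. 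Note that the paper's version yields the sharper statement $(D_{d}(n))_{\ell} = \SO(\la_{d,\Z_{\ell}})$ for $\ell \nmid 2dn$, which is later refined in Proposition \ref{surjtodisc} and Corollary \ref{D_{d}(n)}; your finite-index argument alone would not give this, though it suffices for the proposition as stated.
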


\begin{proof}
First, we shall show that $D_{d}(n)$ is contained in $D_{d}$. Lemma \ref{cliffordlemma} shows that
\[
f_{K3}(\GSpin(\la_{d,\Z_{\ell}}))\subset \{g\in \SO(\la_{K3,\Z_{\ell}})\mid gv_{d}=v_{d}\},
\]
thus the desired inclusion follows from Proposition \ref{disc}.

For the openness, it is enough to show that for any
$\ell$ not dividing $2dn$, the $\Z_{\ell}$\textup{-component of }$D_{d}(n)$ is equal to $\SO(\la_{d,\Z_{\ell}})$.
It follows from \cite[Section 4.4]{Andre1996}.
\end{proof}

The following proposition gives more information about $D_{d}(n).$
\begin{prop}\label{surjtodisc}
For any odd prime number $\ell \neq 2$, we have
\[
f_{d}(\GSpin(\la_{d,\Z_{\ell}}))=(D_{d})_{\ell}.
\]
If $\ell=2$, as a subset of $\SO(\la_{K3,\Z_{2}})$, we have
\[
f_{d}(\GSpin(\la_{d,\Z_{2}}))=(D_{d})_{2}\cap f_{K3}(\GSpin(\la_{K3,\Z_{2}})).
\]
\end{prop}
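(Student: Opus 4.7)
My plan is to reduce both cases to a single identity, established uniformly in $\ell$:
\[
f_{d}(\GSpin(\la_{d,\Z_{\ell}})) = (D_{d})_{\ell}\cap f_{K3}(\GSpin(\la_{K3,\Z_{\ell}})),
\]
viewed inside $\SO(\la_{K3,\Z_{\ell}})$ through the embedding of Proposition \ref{disc}. For $\ell=2$ this is exactly the statement to prove, while for $\ell$ odd it will simplify to $(D_{d})_{\ell}$ once surjectivity of $f_{K3}$ is established separately.

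For the inclusion $\subseteq$, given $z \in \GSpin(\la_{d,\Z_{\ell}})$, Lemma \ref{cliffordlemma} places $z$ in $\GSpin(\la_{K3,\Z_{\ell}})$ and guarantees $zv_{d}=v_{d}z$, so $f_{K3}(z)$ fixes $v_{d}$ and restricts on $v_{d}^{\perp}$ to $f_{d}(z)$; Proposition \ref{openness} then gives $f_{d}(z)\in(D_{d})_{\ell}$. For the inclusion $\supseteq$, I would pick $g \in (D_{d})_{\ell}\cap f_{K3}(\GSpin(\la_{K3,\Z_{\ell}}))$ and write $g = f_{K3}(z)$. By Proposition \ref{disc}, $g$ fixes $v_{d}$, so $z$ and $z^{-1}$ each commute with $v_{d}$, whence Lemma \ref{cliffordlemma} puts both in $C^{+}(\la_{d,\Z_{\ell}})$ and in particular $z \in C^{+}(\la_{d,\Z_{\ell}})^{\times}$. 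To upgrade this to $z\in\GSpin(\la_{d,\Z_{\ell}})$ I would combine two facts: $z\la_{K3,\Z_{\ell}}z^{-1}=\la_{K3,\Z_{\ell}}$ by assumption, while $f_{K3}(z)$, being an orthogonal transformation fixing $v_{d}$, preserves $v_{d}^{\perp}=\la_{d,\Q_{\ell}}$; primitivity of $\la_{d}\hookrightarrow\la_{K3}$ gives $\la_{K3,\Z_{\ell}}\cap\la_{d,\Q_{\ell}}=\la_{d,\Z_{\ell}}$, so $z\la_{d,\Z_{\ell}}z^{-1}=\la_{d,\Z_{\ell}}$.

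For $\ell$ odd, the remaining step is the surjectivity $f_{K3}(\GSpin(\la_{K3,\Z_{\ell}}))=\SO(\la_{K3,\Z_{\ell}})$, which collapses the intersection above to $(D_{d})_{\ell}$. Since $\la_{K3}$ is unimodular and $\ell$ is odd, the central extension $1 \to \Gm \to \GSpin_{\la_{K3}} \to \SO_{\la_{K3}} \to 1$ of smooth affine $\Z_{\ell}$-group schemes is exact in the \'etale topology, and Hilbert 90, in the form $H^{1}_{\et}(\Spec\Z_{\ell},\Gm)=\Pic(\Z_{\ell})=0$, delivers surjectivity on $\Z_{\ell}$-points. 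The main subtlety—and the reason the $\ell=2$ case must be left as an intersection rather than simplified to $(D_{d})_{2}$—is precisely that this Hilbert 90 argument fails at residual characteristic $2$, where the surjectivity of $\GSpin\to\SO$ on $\Z_{2}$-points breaks down in general; happily, the intersection formula is uniformly valid and is what is actually needed for the applications in the sequel.
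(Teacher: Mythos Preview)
Your proof is correct and structurally matches the paper's. Both arguments establish the uniform intersection identity $f_{d}(\GSpin(\la_{d,\Z_{\ell}})) = (D_{d})_{\ell}\cap f_{K3}(\GSpin(\la_{K3,\Z_{\ell}}))$ by the same mechanism: Lemma~\ref{cliffordlemma} and Proposition~\ref{openness} for $\subseteq$, and for $\supseteq$, lifting $g$ to $z\in\GSpin(\la_{K3,\Z_{\ell}})$, using $gv_{d}=v_{d}$ together with Lemma~\ref{cliffordlemma} to place $z$ in $C^{+}(\la_{d,\Z_{\ell}})^{\times}$, and then checking $z\la_{d,\Z_{\ell}}z^{-1}=\la_{d,\Z_{\ell}}$. (The paper phrases this last step by invoking Proposition~\ref{disc} directly---since $g\in(D_{d})_{\ell}\subset\SO(\la_{d,\Z_{\ell}})$, stability of $\la_{d,\Z_{\ell}}$ under conjugation by $z$ is immediate---whereas you rederive it from primitivity; both are fine.)

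The one genuine difference is in establishing the surjectivity $f_{K3}(\GSpin(\la_{K3,\Z_{\ell}}))=\SO(\la_{K3,\Z_{\ell}})$ for odd $\ell$. The paper simply cites Andr\'e \cite[Section~4.4]{Andre1996}, whose argument is elementary and hands-on. Your route via the short exact sequence $1\to\Gm\to\GSpin_{\la_{K3}}\to\SO_{\la_{K3}}\to 1$ over $\Z_{\ell}$ and Hilbert~90 is cleaner conceptually, but it presupposes that these are smooth group schemes over $\Z_{\ell}$ with the map a $\Gm$-torsor---true for the unimodular lattice $\la_{K3}$ with $\ell$ odd, though not set up in the paper. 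Either approach works; Andr\'e's avoids importing the integral group-scheme formalism, yours makes the obstruction at $\ell=2$ more transparent.
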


\begin{proof}
If $\ell$  does not divide $2d$, these results are essentially shown in the proof of Proposition \ref{openness}. 

First, for any prime number $\ell$, we have $f_{d}(\GSpin(\la_{d,\Z_{\ell}}))\subset (D_{d})_{\ell}$ as in the proof of Proposition \ref{openness}.
We assume $\ell \neq 2$. 
For any $g\in (D_{d})_{\ell}\subset \SO(\la_{K3,\Z_{\ell}}),$ by the same argument as in
\cite[Section 4.4]{Andre1996} (here we use $\ell\neq 2$),
there exists $z \in \GSpin(\la_{K3,\Z_{\ell}})$ such that $f_{K3}(z)=g$. 
Proposition \ref{disc} implies $zv_{d}z^{-1}=v_{d}$, and so in fact, $z\in C^{+}(\la_{d,\Z_{\ell}})^{\times}$ by Lemma \ref{cliffordlemma}.  By Proposition \ref{disc}, $z$ stabilizes $\la_{d,\Z_{\ell}}$ via conjugation,
thus $z\in \GSpin(\la_{d,\Z_{\ell}})$ and it finishes the proof of the first claim.
If $\ell=2,$ the second claim follows by the same arguments.
\end{proof}
\begin{remark}\label{rem2adic}
Unfortunately, if $\ell =2,$ we have $f_{d}(\GSpin(\la_{d,\Z_{2}})) \neq (D_{d})_{2}$. Indeed, there exists $g_{2}\in(D_{d})_{2}$ which is non-trivial in $\SO(\la_{d,\Z/2\Z})$ (for example, permutation of two components $\mathbb{H}_{\Z_{2}} \subset \la_{d,\Z_{2}}$), though any element in the image of $f_{d}$ is trivial there.
\end{remark}

\begin{corollary}\label{D_{d}(n)}
Let $(D_{d}(n))_{\ell}$ be the $\Z_{\ell}$-component of $D_{d}(n),$ and $n_{\ell}$ be the $\ell$-part of $n$. 
Then, for any prime number $\ell\neq 2$, we have 
\[
[(D_{d})_{\ell}\colon(D_{d}(n))_{\ell}]\leq n_{\ell}^{(2^{20})}.
\]

Moreover, there exists a positive integer $N$ which is independent of $d$ and $n$ 
such that 
\[
[(D_{d})_{2}\colon(D_{d}(n))_{2}] \leq N\cdot n_{2}^{(2^{20})}.
\]
\end{corollary}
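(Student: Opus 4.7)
The plan is to lift the index from $(D_{d})_{\ell}$ up to $\GSpin(\la_{d,\Z_{\ell}})$ via Proposition \ref{surjtodisc}, and then bound the resulting level quotient by the cardinality of the even Clifford algebra $C^{+}(\la_{d})$ modulo $n_{\ell}$.

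For $\ell \neq 2$, Proposition \ref{surjtodisc} gives a surjection $f_{d}\colon \GSpin(\la_{d,\Z_{\ell}}) \twoheadrightarrow (D_{d})_{\ell}$. The $\ell$-component $\K_{d,n,\ell}^{\spp}$ of $\K_{d,n}^{\spp}$ is, by definition, the kernel of the reduction map $\GSpin(\la_{d,\Z_{\ell}}) \to C^{+}(\la_{d,\Z/n_{\ell}\Z})^{\times}$, and $(D_{d}(n))_{\ell} = f_{d}(\K_{d,n,\ell}^{\spp})$. Hence
\[
[(D_{d})_{\ell} : (D_{d}(n))_{\ell}] \leq [\GSpin(\la_{d,\Z_{\ell}}) : \K_{d,n,\ell}^{\spp}],
\]
and the right-hand side is at most the cardinality of the underlying additive group $C^{+}(\la_{d,\Z/n_{\ell}\Z})$. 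Since $\la_{d}$ has $\Z$-rank $21$, the even Clifford algebra $C^{+}(\la_{d})$ is free of $\Z$-rank $2^{20}$, and this cardinality equals $n_{\ell}^{2^{20}}$, yielding the first bound.

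For $\ell = 2$, the same argument yields $[f_{d}(\GSpin(\la_{d,\Z_{2}})) : (D_{d}(n))_{2}] \leq n_{2}^{2^{20}}$. The second identity of Proposition \ref{surjtodisc} provides an injection
\[
(D_{d})_{2} / f_{d}(\GSpin(\la_{d,\Z_{2}})) \hookrightarrow \SO(\la_{K3,\Z_{2}}) / f_{K3}(\GSpin(\la_{K3,\Z_{2}})),
\]
so it suffices to set $N$ equal to the cardinality of the right-hand quotient, which depends only on the fixed lattice $\la_{K3}$. Then multiplying gives $[(D_{d})_{2} : (D_{d}(n))_{2}] \leq N \cdot n_{2}^{2^{20}}$.

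The only technical obstacle is verifying that $N$ is finite. This rests on the fact that $\GSpin_{\la_{K3}} \to \SO_{\la_{K3}}$ is a central isogeny with kernel $\Gm$: the map is smooth (a $\Gm$-torsor), so the induced map on $\Z_{2}$-points has open image, and compactness of $\SO(\la_{K3,\Z_{2}})$ then forces $N < \infty$.
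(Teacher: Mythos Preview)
Your argument is correct and matches the paper's proof essentially line for line: lift to $\GSpin(\la_{d,\Z_{\ell}})$ via Proposition~\ref{surjtodisc}, bound the congruence quotient by $\#\,C^{+}(\la_{d,\Z/n_{\ell}\Z})=n_{\ell}^{2^{20}}$, and for $\ell=2$ absorb the defect $[(D_{d})_{2}:(D_{d})_{2}\cap f_{K3}(\GSpin(\la_{K3,\Z_{2}}))]$ into $N=[\SO(\la_{K3,\Z_{2}}):f_{K3}(\GSpin(\la_{K3,\Z_{2}}))]$. One terminological slip: a map with kernel $\Gm$ is not an isogeny, but your actual reasoning (it is a $\Gm$-torsor, hence smooth, hence open image in a compact group) is fine and in fact supplies a finiteness check for $N$ that the paper leaves implicit.
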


\begin{proof}
Assume $\ell\neq 2$.
We have the following commutative diagram.
\[
\xymatrix{
\GSpin(\la_{d,\Z_{\ell}}) \ar@{->>}[r] & (D_{d})_{\ell}\\
(\mathbb{K}_{d,n}^{\spp})_{\ell} \ar@{^{(}->}[u] \ar@{->>}[r] & (D_{d}(n))_{\ell} \ar@{^{(}->}[u]
}
\]
Here we have
\[
(\mathbb{K}_{d,n}^{\spp})_{\ell}=\{g\in \GSpin(\la_{d,\Z_{\ell}})\mid g=1 \textup{ in } C^{+}(\la_{d,\Z_{\ell}/n_{\ell}\Z_{\ell}})\}.
\]
Since
\[
\#(C^{+}(\la_{d,\Z_{\ell}/n_{\ell}\Z_{\ell}})^{\times})\leq \#(C^{+}(\la_{d,\Z_{\ell}/n_{\ell}\Z_{\ell}}))= n_{\ell}^{(2^{20})},
\]
the index of $(\mathbb{K}_{d,n}^{\spp})_{\ell}$ in $\GSpin(\la_{d,\Z_{\ell}})$ is bounded by $n_{\ell}^{(2^{20})}$, and it finishes the proof of the first claim.

For the second claim, we put 
\[
N\coloneqq [\SO(\la_{K3,\Z_{2}}):f_{K3}(\GSpin(\la_{K3,\Z_{2}}))].
\]
Then, by the second claim of Proposition \ref{surjtodisc} and the above arguments, we have
\begin{align*}
[(D_{d})_{2}\colon D_{d}(n)_{2}]&\leq
[(D_{d})_{2}\colon (D_{d})_{2}\cap f_{K3}(\GSpin(\la_{K3,\Z_{2}}))]\cdot n_{2}^{(2^{20})} \\
& \leq N\cdot n_{2}^{(2^{20})}.
\end{align*}
\end{proof}

\subsection{Preparation II}
Here, we will introduce a even unimodular lattice $\la$ of signature $(26,2)$ which contains all $\la_{d}$. Then we will define related objects as in the previous subsection.

\begin{prop}[{see also \cite[Lemma 3.3.1]{She2017}}]
We put 
\[
\mathcal{L} \coloneqq \mathbb{E}_{8}^{3} \oplus \mathbb{H}^{2}.
\]
For any positive integer $d$, there exists a primitive embedding of lattices
\[
i_{d}\colon \mathcal{L}_{d}\hookrightarrow \mathcal{L}.
\]
\end{prop}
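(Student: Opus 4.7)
The plan is to construct $i_{d}$ as the direct sum of the identity on the common summand $\mathbb{E}_{8}^{2}\oplus\mathbb{H}^{2}$ with a primitive embedding $\langle 2d\rangle \hookrightarrow \langle 1\rangle^{5}$. Recall that $\la_{d}\simeq \mathbb{E}_{8}^{2}\oplus\mathbb{H}^{2}\oplus\langle 2d\rangle$ by Definition \ref{lattice} and $\la = \mathbb{E}_{8}^{2}\oplus\mathbb{H}^{2}\oplus\langle 1\rangle^{5}$ by hypothesis, so any such splitting tautologically produces a primitive embedding $\la_{d}\hookrightarrow \la$. Specifying a primitive embedding $\langle 2d\rangle \hookrightarrow \langle 1\rangle^{5}$ is equivalent to producing an integer tuple $(a_{1},\ldots,a_{5})\in\Z^{5}$ with $a_{1}^{2}+\cdots+a_{5}^{2}=2d$ and $\gcd(a_{1},\ldots,a_{5})=1$, since such a tuple is exactly the image of a generator of $\langle 2d\rangle$ under the embedding, and primitivity of the tuple is equivalent to the image being a $\Z$-direct summand of $\langle 1\rangle^{5}$.

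To produce such a tuple I would apply Lagrange's four-square theorem to the positive integer $2d-1\geq 1$, which yields nonnegative integers $c_{1},c_{2},c_{3},c_{4}$ with $c_{1}^{2}+c_{2}^{2}+c_{3}^{2}+c_{4}^{2}=2d-1$. Setting $(a_{1},a_{2},a_{3},a_{4},a_{5})\coloneqq (1,c_{1},c_{2},c_{3},c_{4})$, the squared norm equals $2d$ by construction, and the gcd is $1$ because $a_{1}=1$. This gives the primitive vector required in the previous paragraph, and hence the desired $i_{d}$.

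There is no substantive obstacle in this argument. The only mild trick worth flagging is that one should apply Lagrange's theorem to $2d-1$ rather than $2d$: doing so forces a coordinate equal to $1$, which instantly guarantees primitivity, whereas applying the theorem directly to $2d$ could yield a representation whose four summands share a common divisor (e.g.\ when $2d$ is a power of $4$ times a small number). Any explicit sum-of-four-squares decomposition of $2d-1$ works equally well, so the construction is completely elementary.
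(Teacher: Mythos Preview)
Your argument is correct. The paper does not give its own proof but simply cites \cite[Lemma 3.3.1]{She2017}, so there is no detailed comparison to make with the paper itself. Your explicit construction---identity on $\mathbb{E}_{8}^{2}\oplus\mathbb{H}^{2}$ together with the primitive vector $(1,c_{1},c_{2},c_{3},c_{4})\in\langle 1\rangle^{5}$ obtained from Lagrange's theorem applied to $2d-1$---is a perfectly valid and self-contained proof. The two small facts you use are both standard: a direct sum of primitive embeddings is primitive (since the cokernel is the direct sum of the cokernels, hence torsion-free), and a vector in $\Z^{5}$ with coprime coordinates generates a saturated rank-one sublattice. The trick of writing $2d-1$ rather than $2d$ as a sum of four squares to force a coordinate equal to $1$ is exactly the right move; without it primitivity would require a separate case analysis.
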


\begin{proof}
See \cite[Corollary 1.12.3]{Nikulin1979}.
\end{proof}

\begin{remark}\phantomsection\label{remunifks}
\begin{enumerate}
\item
Since $\la$ is unimodular, the group $\SO(\la_{\widehat{\Z}})$ is the discriminant kernel of $\la$, which is defined as in Definition \ref{discriminantkernel}.
\item
To get a primitive embedding into a self-dual lattice, the lattice $\mathbb{E}_{8}^2 \oplus \mathbb{H}^{2} \oplus \langle1\rangle^{5}$ is enough (see \cite[Lemma 3.3.1]{She2017}). 
However, we require that $\la$ comes from a quadratic space for using the usual definition of Clifford algebras.
\end{enumerate}
\end{remark}

Next, we will define related algebraic groups and Shimura data for $\la$ as in Definition \ref{SO} and Definition \ref{shimura}.

\begin{definition}\phantomsection\label{shimurala}
\begin{enumerate}
\item
$\GSpin_{\la}$ is the algebraic group over $\Q$ whose $R$-valued points are given by
\[
\GSpin_{\la}(R)\coloneqq
\{z\in C^{+}(\la_{R})^{\times} \mid z \la_{R}z^{-1}=\la_{R}\}.
\]
\item
Take a 2-dimensional negative definite subspace of $\la_{\Q}$, and let $e_{1}, e_{2}$ be its orthogonal basis. Let $e'_{1}, e'_{2}$ be an orthonormal basis over $\R$ which are given by constant multiples of $e_{1}, e_{2}$, and $J\coloneqq e'_{1}e'_{2}\in C^{+}(\la_{\R})$. 
Let $\psi$ be the following map
\[
\psi \colon \dt \rightarrow \GSpin_{\la,\R} ; \alpha+\beta i \mapsto \alpha+\beta J,
\] 
and $X_{\GSpin_{\la}}$ be a $\GSpin_{\la}(\R)$-conjugacy class containing $\psi$. 

\item
$\SO_{\la}$ is the algebraic group over $\Q$ whose $R$-valued points are given by
\[
\SO_{\la}(R)\coloneqq
\{g\in \SL(\la_{R}) \mid (gv, gw)= (v,w) \textup{ for any } v,w \in \la_{R}\}.
\]

\item
$X_{\SO_{\la}}$ is the (isomorphic) image of $X_{\GSpin_{\la}}$ via the adjoint representation $\GSpin_{\la} \rightarrow \SO_{\la}$.

\item
For $V\coloneqq C(\la)$ and a fixed $a\in V$ which is a constant multiple of $e_{1}e_{2}$, define $\phi_{a}\colon V\times V \rightarrow \Z$ as $\phi_{a}(x, y)\coloneqq \mathrm{tr}_{V/\Q}(xay^{\ast}).$ Here $\mathrm{tr}_{V/\Q}(x)$ means the trace of a left multiplication map by $x$ as in \cite[Chapter 4, Section 2.2]{Huybrechts2016}, and $\ast$ denotes the natural anti-automorphism on the Clifford algebra. 
Then $\phi_{a}$ is a non-degenerate alternative form. We denote its degree by $r$.
Let $\GSp_{V,a}$ be the algebraic group over $\Q$ whose $R$-valued points are given by 
\[
\GSp_{V, a}(R) \coloneqq 
\left\{g \in \GL(V_{R}) \left| 
\begin{array}{l}
\textup{there exists $c\in R^{\times}$ such that }\\
\phi_{a}(gx, gy) = c\phi_{a}(x, y) \textup{ for any $x, y \in V_{R}$}
\end{array}
\right.\right\}.
\]
Let $(\GSp_{V,a}, X_{\GSp_{V,a}})$ be the Shimura datum associated with $(V, \phi_{a})$. 
\end{enumerate}
\end{definition}

\begin{remark}
\begin{enumerate}
\item
As in Remark \ref{morp} (1), we can define a homomorphism
\[
f \colon \GSpin_{\la} \rightarrow \SO_{\la} ; g \mapsto (l \mapsto glg^{-1}).
\]
Moreover,  it induces a morphism of Shimura data
\[
(\GSpin_{\la},X_{\GSpin_{\la}})\rightarrow (\SO_{\la},X_{\SO_{\la}}).
\]
\item
We can define a homomorphisms
\[
h \colon \GSpin_{\la} \rightarrow \GSp_{V,a} ; g \mapsto (v \mapsto gv).
\]
Moreover, it induces an embedding of Shimura data 
\[
(\GSpin_{\la},X_{\GSpin_{\la}})\rightarrow (\GSpin_{V,a}, X_{\GSpin_{V,a}})
\]
by our definition of $a$ (see \cite[Chapter 4, Section 2.2]{Huybrechts2016}).

\item
We will use a similar notation as in Remark \ref{morp} (2), (3) for $\la$.
\end{enumerate}
\end{remark}

\begin{definition}
For any positive integer $n,$ we define compact open subgroups $\K_{n}^{\spp} \subset \GSpin_{\la}(\A_{f})$ and $\K_{n} \subset \GSp_{V,a}(\A_{f})$ by
\begin{align*}
\K_{n}^{\spp} &\coloneqq \{g \in \GSpin(\la_{\widehat{\Z}}) \mid
g=1 \textup{ in } C^{+}(\la_{\widehat{\Z}/n\widehat{\Z}})\},\\
\K_{n} &\coloneqq \{g \in \GSp_{V,a}(\A_{f}) \mid
g V_{\widehat{Z}}= V_{\widehat{\Z}},
\, g \textup{ acts trivial on } V_{\widehat{\Z}/n \widehat{\Z}}\}.
\end{align*}
\end{definition}

\begin{remark}\phantomsection\label{univab}
\begin{enumerate}
\item
One can show that $h(\K_{n}^{\spp})\subset\K_{n}$ and $h^{-1}(\K_{n})=\K_{n}^{\spp}$.
Moreover, our definition of $\K_{n}$ coincides with $\Lambda_{n}$ in Rizov's paper \cite[Section 5.5]{Rizov2010}. 
Therefore, as in \cite[Section 5.5]{Rizov2010}, we have an embedding
\[
\Sh_{\K_{n}}(\GSp_{V,a}, X_{\GSp_{V,a}}) \hookrightarrow \mathcal{A}_{g, \sqrt{r}, n, \Q}.
\]
Here, we put $g\coloneqq 2^{27}$, and $\mathcal{A}_{g, \sqrt{r}, n, \Q}$ is the moduli space of $g$-dimensional degree $r$ polarized abelian schemes with level $n$-structure. 

\item
The lattice embedding $i_{d}\colon \la_{d} \hookrightarrow \la$ indeces a morphism of algebraic groups $i_{d}\colon\SO_{\la_{d}}\rightarrow \SO_{\la}$. It induces an embedding of Shimura data
\[
(\SO_{\la_{d}},X_{\SO_{\la_{d}}})\rightarrow(\SO_{\la},X_{\SO_{\la}}).
\]
\item
One can show that $D(n)\coloneqq f(\K_{n}^{\spp})$ is a compact open subgroup of $\SO(\la_{\widehat{\Z}})$ similarly as in Proposition \ref{openness}. 
Moreover, it is clear that $i_{d}(D_{d}(n))\subset D(n)$ because we have $\GSpin(\la_{d,\Z_{\ell}})\subset \GSpin(\la_{\Z_{\ell}})$ as in Lemma \ref{cliffordlemma}.
\end{enumerate}
\end{remark}

\subsection{The uniform Kuga--Satake construction} \label{subsectionuks}
In this subsection, we assume that a positive integer $n$ is sufficiently large (in our application, $n$ would be a sufficiently large power of $2$). Previous two subsections imply that there exists the following diagram of schemes over $\Q.$
\[
\begin{xy}
(0,0) *{M_{2d,D_{d}(n),\Q}^{\circ}}="A", 
(34,0) *{\Sh_{D_{d}(n)}(\SO_{\la_{d}}, X_{\SO_{\la_{d}}})}="B",
(76,0) *{\Sh_{D(n)}(\SO_{\la},X_{\SO_{\la}})}="C",
(76,14) *{\Sh_{\K_{n}^{\spp}}(\GSpin_{\la}, X_{\GSpin_{\la}})}="D",
(120,0) *{\Sh_{\K_{n}}(\GSp_{V,a}, X_{\GSp_{V,a}})}
\ar (10,0); (13.5,0) ^-{j}
\ar (54,0); (58,0) ^-{i_{d}}
\ar (76,11); (76,3.5) ^-{f}
\ar (85.4,11); (109,3.5) ^-{h}
\end{xy}
\]
Here $\Sh_{\K}(G,X)$ means the canonical model of a Shimura variety of level $\K$ associated with $(G,X)$ over $\Q$, which is the reflex field of $(G,X)$. Then, by the arguments in \cite[Section 5.5]{Rizov2010}, we can find $\delta$ which is a section of $f$ over a certain number field $E_{n}$. Indeed, as in \cite[Section 5.5]{Rizov2010}, our definition of $D(n)$ guarantees that $f$ in the above diagram induces isomorphisms between geometric connected components of the above Shimura varieties. Hence we can find a section of $f$ over a number field on which  all geometric connected components are defined.

In the following of this subsection, we fix a field $F$ containing $E_{n}$. We consider the base change from $\Q$ to $F$ of the above diagram.
\begin{equation}
\vcenter{
\xymatrix{
&&\Sh_{\K_{n}^{\spp}}(\GSpin_{\la})\ar @<0.5ex>[d]^{f} \ar[rd]^{h}& \\
M_{2d,D_{d}(n),F}^{\circ} \ar [r]^-{j} &\Sh_{D_{d}(n)}(\SO_{\la_{d}}) \ar[r]^{i_{d}}&\Sh_{D(n)}(\SO_{\la}) \ar @<0.5ex>[u]^{\delta} &\Sh_{\K_{n}}(\GSp_{V,a})   \\
}
}
\tag{$\ast$}
\end{equation}
Here, and in the following of this paper, for simplicity, we denote $(\Sh_{\K}(G,X))_{F}$ by $\Sh_{\K}(G)$. Moreover, we denote the composition $h \circ \delta \circ i_{d} \circ j$ by $\Delta_{d}$.

Remark $\ref{univab}$ implies that there exists the universal abelian scheme $\mathcal{A}$ over $\Sh_{\K_{n}}(\GSp_{V,a})$ possessing the degree $r$ polarization and the level $n$-structure. Then, for $(X,L,\nu,\alpha) \in M_{2d,D_{d}(n),F}^{\circ}(F)$ which corresponds to a morphism $t\colon \Spec F\rightarrow M_{2d,D_{d}(n),F}^{\circ}$, we can associate an abelian variety $A^{(X,L,\alpha)}$ by pulling buck $\mathcal{A}$ via $\Delta_{d} \circ t$. We will quickly recall the properties of $A^{(X,L,\alpha)}.$

\begin{definition}\label{shf}
Let $\ell$ be any prime number.
\begin{enumerate}
\item
Let $S$ be any (schematic) connected component of $\Sh_{D(n)}(\SO_{\la}),$ and $\overline{s}\rightarrow S$ be a geometric point.
Then, as in \cite[III, Remark 6.1]{Milne1990}, we can show that
\[
\plim[\K]\Sh_{\K}(\SO_{\la})\rightarrow \Sh_{D(n)}(\SO_{\la})
\]
is a Galois covering with a Galois group $D(n)$, and so we can associate the representation 
\[
\pi_{1}(S,\overline{s})\rightarrow (D(n))_{\ell}\rightarrow \SO(\la_{\Z_{\ell}}).
\]
We define $\la_{\Z_{\ell}}^{\shf}$ as  the corresponding $\Z_{\ell}$-sheaf on $\Sh_{D(n)}(\SO_{\la})$, which has a symmetric pairing structure.
\item
Similarly, we define $\la_{d,\Z_{\ell}}^{\shf}$ as the $\Z_{\ell}$-sheaf on $\Sh_{D_{d}(n)}(\SO_{\la_{d}})$ corresponding to the representation $(D_{d}(n))_{\ell}\rightarrow \SO(\la_{d,\Z_{\ell}}).$ The sheaf $\la_{d,\Z_{\ell}}^{\shf}$ has a symmetric pairing structure.
\item
Similarly, we define $V_{\Z_{\ell}}^{\shf}$ as the $\Z_{\ell}$-sheaf on $\Sh_{\K_{n}}(\GSp_{V,a})$ corresponding to the representation $(\K_{n})_{\ell} \rightarrow \GSp(V_{\Z_{\ell}}, \phi_{a})$. The sheaf $V_{\Z_{\ell}}^{\shf}$ has a symplectic pairing structure.

\end{enumerate}
\end{definition}

\begin{lemma}\phantomsection\label{morshf}
\begin{enumerate}
\item
There exists the natural injection of \'etale sheaves $\la_{d,\Z_{\ell}}^{\shf}\rightarrow i_{d}^{\ast}\la_{\Z_{\ell}}^{\shf}$ preserving the pairing. Moreover, $(\la_{d,\Z_{\ell}}^{\shf})^{\perp}$ is trivial as a $\Z_{\ell}$-sheaf.
\item
There exists the natural injection of \'etale sheaves $f^{\ast}\la_{\Z_{\ell}}^{\shf}\rightarrow \End(h^{\ast}(V_{\Z_{\ell}}^{\shf}))$, which induces a `left multiplication' on a stalk.
\end{enumerate}
\end{lemma}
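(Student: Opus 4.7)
The plan is to prove both assertions by exhibiting the required map on the level of the $\pi_1$-representations that define the sheaves in Definition \ref{shf}, verifying equivariance, and then translating back to sheaves on the relevant Shimura variety.

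For (1), the embedding $i_d\colon\la_d\hookrightarrow\la$ is a primitive isometric embedding, and on algebraic groups $i_d\colon\SO_{\la_d}\to\SO_\la$ is obtained by extending an automorphism of $\la_d\otimes\Q$ by the identity on the orthogonal complement $\la_d^\perp\otimes\Q$. In particular $i_d\colon\la_d\hookrightarrow\la$ is $\SO_{\la_d}$-equivariant and preserves the symmetric pairing. By Remark \ref{univab}(3) we have $i_d((D_d(n))_\ell)\subset (D(n))_\ell$, so passing to the associated local systems yields the desired pairing-preserving injection $\la_{d,\Z_\ell}^{\shf}\hookrightarrow i_d^\ast\la_{\Z_\ell}^{\shf}$. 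The orthogonal complement $(\la_{d,\Z_\ell}^{\shf})^\perp$ corresponds to the $\pi_1$-representation on $\la_d^\perp\cap\la_{\Z_\ell}\subset\la_{\Z_\ell}$, on which $(D_d(n))_\ell$ acts trivially by the very construction of $i_d$; hence this subsheaf is a constant $\Z_\ell$-sheaf, as claimed.

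For (2), the sheaves $f^\ast\la_{\Z_\ell}^{\shf}$ and $h^\ast V_{\Z_\ell}^{\shf}$ correspond respectively to the conjugation action of $(\K_n^{\spp})_\ell\subset\GSpin(\la_{\Z_\ell})$ on $\la_{\Z_\ell}$ and to its left-multiplication action on $V_{\Z_\ell}=C(\la_{\Z_\ell})$. I would consider the $\Z_\ell$-linear map
\[
L\colon \la_{\Z_\ell}\longrightarrow \End_{\Z_\ell}(V_{\Z_\ell}),\qquad v\longmapsto (x\mapsto vx).
\]
Since $\la\subset C(\la)$, evaluating at $1$ gives $L(v)(1)=v$, so $L$ is injective. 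For $g\in\GSpin(\la_{\Z_\ell})$, associativity in the Clifford algebra yields $L_g\circ L(v)\circ L_g^{-1}=L(gvg^{-1})$, where $L_g$ denotes left multiplication by $g$; this says precisely that $L$ is $\GSpin$-equivariant when $\End_{\Z_\ell}(V_{\Z_\ell})$ is endowed with the conjugation action induced from left multiplication, which is the action defining $\End(h^\ast V_{\Z_\ell}^{\shf})$. Globalizing, I obtain the desired injection of \'etale sheaves, realized on each stalk by the left-multiplication map.

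The main obstacle is really just bookkeeping: one has to match the various pullbacks of $\pi_1$-representations through the morphisms in diagram $(\ast)$ and verify that the chosen integral structures at $\ell$ are compatible, which is where Lemma \ref{cliffordlemma}, Remark \ref{univab}(3), and the identity $h^{-1}(\K_n)=\K_n^{\spp}$ enter. Once these identifications are in place, both the equivariance and the injectivity of the maps are immediate from the definition of $\GSpin$ via the Clifford algebra.
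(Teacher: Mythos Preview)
Your proposal is correct and follows essentially the same approach as the paper: both proofs reduce the assertions to the level of the $\pi_1$-representations defining the sheaves in Definition \ref{shf}, identify the pullback representations via the chain of group homomorphisms in diagram $(\ast)$, and then verify equivariance of $i_d\colon\la_{d,\Z_\ell}\hookrightarrow\la_{\Z_\ell}$ (for (1)) and of the left-multiplication map $\la_{\Z_\ell}\to\End(V_{\Z_\ell})$ (for (2)) directly from the definitions. Your extra remarks (injectivity via evaluation at $1$, and the explicit associativity computation $L_g\circ L(v)\circ L_g^{-1}=L(gvg^{-1})$) simply make explicit what the paper leaves as ``clear''.
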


\begin{proof}
For $(1)$, it is enough to show that $i_{d}\colon \la_{d,\Z_{\ell}}\rightarrow \la_{\Z_{\ell}}$ is $\pi_{1}(S,\overline{s})$-equivariant, and $(\la_{d,\Z_{\ell}})^{\perp}$ is a trivial $\pi_{1}(S,\overline{s})$-module. Here $S$ is any connected component of $\Sh_{D_{d}(n)}(\SO_{\la_{d}})$, $\overline{s}$ is a geometric point of $S$, and $\pi_{1}(S,\overline{s})$-module structure on $\la_{d,\Z_{\ell}}$, $\la_{\Z_{\ell}}$ correspond to $\la_{d,\Z_{\ell}}^{\shf}$, $i_{d}^{\ast}\la_{\Z_{\ell}}^{\shf}$. In regard to a $\Z_{\ell}$-sheaf given by a representation of adelic subgroup, a pullback of a $\Z_{\ell}$-sheaf corresponds to a pullback of a representation. Thus $\pi_{1}(S,\overline{s})$-module structure on $\la_{\Z_{\ell}}$ is given by 
\[
\pi_{1}(S,\overline{s})\rightarrow (D_{d}(n))_{\ell}\hookrightarrow (D(n))_{\ell} \hookrightarrow \SO(\la_{\Z_{\ell}}).
\] 
Hence the desired claim is clear.

For $(2)$, it is enough to show that the morphism
\[
\la_{\Z_{\ell}} \rightarrow \End(V_{\Z_{\ell}}) ; v\mapsto (z\mapsto vz)
\]
is $\pi_{1}(S,\overline{s})$-equivariant, where $S$ is any connected component of $\Sh_{\K_{n}^{\textup{sp}}}(\GSpin_{\la})$, and $\pi_{1}(S,\overline{s})$-module structure on $\la_{\Z_{\ell}}$, $V_{\Z_{\ell}}$ correspond to $f^{\ast}(\la_{\Z_{\ell}}^{\shf})$, $h^{\ast}(V_{\Z_{\ell}}^{\shf})$. By the same reason as $(1)$, these structure are given by 
\begin{align*}
\pi_{1}(S,\overline{s})\rightarrow (\K_{n}^{\textup{sp}})_{\ell} \xrightarrow{f} (D(n))_{\ell} \hookrightarrow \SO(\la_{\Z_{\ell}}),\\
\pi_{1}(S,\overline{s})\rightarrow (\K_{n}^{\textup{sp}})_{\ell} \xrightarrow{h} (\K_{n})_{\ell} \rightarrow \GSp(V_{\Z_{\ell}},\phi_{a}).
\end{align*}
Hence if we denote the first arrows of the both by $\sigma$, 
these actions are described as
\[
\gamma(v)= \sigma(\gamma)v\sigma(\gamma)^{-1}, \quad \gamma(z)=\sigma(\gamma)(z),
\]
for $\gamma \in \pi_{1}(S,\overline{s}), v\in \la_{\Z_{\ell}},$ and $z\in V_{\Z_{\ell}}.$ 
Thus the desired equivariance is clear.
\end{proof}

\begin{prop}\label{uks}
Let $\ell$ be any prime number, $t \colon \Spec F \rightarrow M_{2d,D_{d}(n),F}^{\circ}$ be the point corresponding to $(X,L, \nu, \alpha)\in M_{2d,D_{d}(n),F}^{\circ}(F)$, $A^{(X,L,\alpha)}$ be the abelian variety given by $(\Delta_{d} \circ t)^{\ast}(\mathcal{A})$,  and $\la_{\Z_{\ell},(X,L,\alpha)}$ be the $\Gal(\overline{F}/F)$-lattice identified with $(i_{d}\circ j\circ t)^{\ast}(\la_{\Z_{\ell}}^{\shf})$. Then, the following hold.
\begin{enumerate}
\item
There exists a Galois equivariant lattice embedding
\[
P^{2}_{\et}((X_{\overline{F}},L_{\overline{F}}),\Z_{\ell}(1))\subset \la_{\Z_{\ell},(X,L,\alpha)}
\]
such that $\Gal(\overline{F}/F)$ acts trivially on the orthogonal complement
\[
P^{2}_{\et}((X_{\overline{F}},L_{\overline{F}}),\Z_{\ell}(1))^{\perp}\subset \la_{\Z_{\ell},(X,L,\alpha)}.
\]
\item
The abelian variety
$A^{(X,L,\alpha)}$ has a level $n$-structure defined over $F$. Thus each $n$-torsion point of $A^{(X,L,\alpha)}$ is $F$-rational. 

\item
The abelian variety
$A^{(X,L,\alpha)}$ admits a left $C(\la)$-action over $F$, and moreover there exists an isomorphism of $\Z_{\ell}$-modules 
\[
H^{1}_{\et}(A^{(X,L,\alpha)}_{\overline{F}}, \Z_{\ell})\simeq C(\la_{\Z_{\ell},(X,L,\alpha)})
\]
which identifies the algebra 
\[
C(\la_{\Z_{\ell}})^{\textup{op}} \subset \End (H^{1}_{\et}(A^{(X,L,\alpha)}_{\overline{F}}, \Z_{\ell}))
\]
with 
\[
C(\la_{\Z_{\ell},(X,L,\alpha)})^{\textup{op}} \subset \End (C(\la_{\Z_{\ell},(X,L,\alpha)})).
\]
Here, the former inclusion of algebras is induced by the above $C(\la)$-action, and the latter is induced by the right multiplication.

\item
The left multiplication by $C(\la_{\Z_{\ell},(X,L,\alpha)})$ on the right-hand side of the isomorphism in $(3)$ induces a Galois equivariant isomorphism
\[
C(\la_{\Z_{\ell},(X,L,\alpha)}) \simeq \End_{C(\la_{\Z_{\ell}})^{\textup{op}}}(H^{1}_{\et}(A^{(X,L,\alpha)}_{\overline{F}},\Z_{\ell})).
\]
Here, the (left) $C(\la_{\Z_{\ell}})^{\textup{op}}$-module structure is induced by the left $C(\la)$-action on $A^{(X,L,\alpha)}$ as in $(3)$.
\end{enumerate}
\end{prop}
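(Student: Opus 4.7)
The plan is to track each of the four claimed structures through the diagram $(\ast)$, reading off the properties of $A^{(X,L,\alpha)}$ by pulling back the universal objects and the tautological $\Z_\ell$-sheaves on the Shimura varieties along the appropriate compositions of maps.

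For part (1), I will use the period map $j$ together with the level structure $\alpha$: by the construction of $M^\circ_{2d,D_d(n),F}$ and the definition of $\la_{d,\Z_\ell}^{\shf}$, the level structure gives a Galois equivariant isometry $(j\circ t)^\ast(\la_{d,\Z_\ell}^{\shf}) \simeq P^{2}_{\et}((X_{\overline F},L_{\overline F}),\Z_\ell(1))$. Applying Lemma \ref{morshf}(1) pulled back along $i_d\circ j\circ t$ then embeds this into $\la_{\Z_\ell,(X,L,\alpha)}$ as an orthogonal direct summand whose complement is a trivial $\Z_\ell$-sheaf, hence carries trivial Galois action. For part (2), I will pull back the canonical level $n$-structure on the universal abelian scheme $\mathcal{A}$: by Remark \ref{univab}(1), $\Sh_{\K_n}(\GSp_{V,a})$ embeds in the moduli space $\mathcal{A}_{g,\sqrt{r},n,\Q}$, so $\mathcal{A}$ carries a tautological level $n$-structure, which pulls back along $\Delta_d\circ t$.

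For parts (3) and (4), the key input is Lemma \ref{morshf}(2). The morphism $h\colon\GSpin_\la\hookrightarrow\GSp_{V,a}$ is constructed from $V=C(\la)$ with the left multiplication action of $\GSpin_\la$, so the universal abelian scheme on $\Sh_{\K_n^{\spp}}(\GSpin_\la)$ carries a natural left $C(\la)$-action. Pulling this back along $\delta\circ i_d\circ j\circ t$ produces an $F$-rational left $C(\la)$-action on $A^{(X,L,\alpha)}$, which descends to $F$ because $\delta$ is defined over $E_n\subset F$. The identification $H^{1}_{\et}(A^{(X,L,\alpha)}_{\overline F},\Z_\ell)\simeq C(\la_{\Z_\ell,(X,L,\alpha)})$ and the compatibility of $C(\la_{\Z_\ell})^{\opp}\subset \End(H^1_{\et})$ with right multiplication then come directly from the stalk description: at the level of sheaves, $h^\ast V_{\Z_\ell}^{\shf}$ is $C(\la_{\Z_\ell})$ viewed as a right module over itself, twisted by the Galois representation through $\GSpin$. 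Part (4) then follows from the standard double-centralizer property of Clifford algebras: the commutant of right multiplication by $C(\la_{\Z_\ell})$ acting on $C(\la_{\Z_\ell,(X,L,\alpha)})$ is left multiplication by $C(\la_{\Z_\ell,(X,L,\alpha)})$, and this identification is Galois equivariant since left and right multiplications on the Clifford algebra commute.

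The main obstacle I expect is the bookkeeping of $F$-rationality: while all the sheaf-theoretic identifications live naturally over $\overline F$, descending the Clifford algebra action and the isomorphism in (3) to $F$ requires both the assumption $F\supset E_n$ and the $E_n$-rationality of the section $\delta$ of $f$, as established in the discussion preceding $(\ast)$. A secondary subtlety is translating the description of the sheaves $\la_{d,\Z_\ell}^{\shf}$, $\la_{\Z_\ell}^{\shf}$, $V_{\Z_\ell}^{\shf}$ via fundamental group representations into Galois equivariant statements at $F$-rational points, which requires a careful invocation of the canonical model property of the relevant Shimura varieties.
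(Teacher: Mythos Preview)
Your proposal is essentially correct and follows the same route as the paper: parts (1) and (2) are exactly as in the paper, and for (3)--(4) you correctly identify Lemma \ref{morshf}(2) and the Clifford-algebra centralizer description as the core inputs.

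One point in (3) deserves sharpening. You write that the universal abelian scheme on $\Sh_{\K_n^{\spp}}(\GSpin_\la)$ ``carries a natural left $C(\la)$-action'' and that this is $F$-rational ``because $\delta$ is defined over $E_n\subset F$.'' The $E_n$-rationality of $\delta$ is not the crux here. The $C(\la)$-action on $h^\ast\mathcal{A}$ is first constructed over $\C$, via the observation that right multiplication by $C(\la)$ on $V=C(\la)$ commutes with the left $\GSpin_\la$-action and therefore preserves the Hodge structures parametrized by $X_{\GSpin_\la}$; this is a transcendental construction. Descending this action to the canonical model of $\Sh_{\K_n^{\spp}}(\GSpin_\la)$ over $\Q$ is a separate, nontrivial step (in the paper it is done by invoking \cite[Proposition 3.11]{MadapusiPera2016}). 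Once that descent is in hand, the pullback along $\delta\circ i_d\circ j\circ t$ is automatically defined over $F$, and the rationality of $\delta$ plays no further role beyond making the map itself $F$-rational. Your ``main obstacle'' paragraph anticipates a descent issue but locates it in the wrong place; the section $\delta$ is not what needs care, the $C(\la)$-action does.
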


\begin{proof}
These results are essentially proved in \cite[Proposition 3.5.8]{She2017}. 

$(1)$ follows from Lemma \ref{morshf} (1) and the fact  
\[
(j\circ t)^{\ast}(\la_{d,\Z_{\ell}}^{\shf})\simeq P^{2}_{\et}((X_{\overline{F}},L_{\overline{F}}),\Z_{\ell}(1))
\]
(see \cite[Proposition 5.6 (1)]{MadapusiPera2015}).

$(2)$ is clear because the universal family  $\mathcal{A}$ admits a level $n$-structure.

Before proving $(3)$ and $(4)$, we note that for the universal abelian scheme $u\colon \mathcal{A}\rightarrow \Sh_{\K_{n}}(\GSp_{V,a})$, we have $R^{1}u_{\ast}\Z_{\ell} \simeq V_{\Z_{\ell}}^{\shf}$. 

For $(3)$, as in \cite[Section 3.10]{MadapusiPera2016}, $h^{\ast}(\mathcal{A})_{\Sh_{\K_{n}^{\textup{sp}}}(\GSpin_{\la})_{\C}}$ admits a $C(\la)$-action which corresponds to a right multiplication on the cohomology, since our definition of $h$ guarantees that the right multiplication preserves the Hodge structure. 
This action descends to $F$ by \cite[Proposition 3.11]{MadapusiPera2016} and induces a $C(\la)$-action on $A^{(X,L,\alpha)}$ with desired properties.

For $(4)$, the statement $(3)$ of this proposition and Lemma \ref{morshf} (2) implies the well-definedness and the Galois equivariance of our morphism, and it is clearly bijective.
\end{proof}

\section{Proof of the Main Theorems}
\subsection{Statements}

\begin{lemma}\label{prep}
Let $F$ be a finitely generated field over $\Q$, $R$ be a smooth algebra over $\Z$ which is an integral domain with the fraction field $F$, and $\overline{s}$ be a geometric point corresponding to an algebraic closure $\overline{F}$ over $F$. 
For any $\pi_{1}(\Spec F, \overline{s})$-module $M$ such that $\Ker (\pi_{1}(\Spec F, \overline{s}) \rightarrow \Aut(M))$ is closed, the following are equivalent.
\begin{enumerate}
\item
The $\pi_{1}(\Spec F, \overline{s})$-action on $M$ arises from a $\pi_{1}(\Spec R, \overline{s})$-action on $M$.
\item
For any height $1$ prime ideal $\mathfrak{p} \in \Spec R$,
the $\pi_{1}(\Spec F ,\overline{s})$-action on $M$ arises from a $\pi_{1}(\Spec R_{\mathfrak{p}}, \overline{s})$-action on $M$. 
\item
For any height $1$ prime ideal $\mathfrak{p} \in \Spec R$,
M is unramified at $\p$, i.e.\ if we take $\overline{v}$ which is an extension of valuation $\p$ to $\overline{F}$, the inertia group $I_{\overline{v}}$ acts trivially on $M$.
\end{enumerate}
When $M$ satisfies the above equivalent conditions, we say $M$ is unramified over $\Spec R$. 
\end{lemma}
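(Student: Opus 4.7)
The plan is to establish the chain of implications $(1) \Rightarrow (2) \Rightarrow (3) \Rightarrow (1)$. The first two implications follow formally from the functoriality of $\pi_{1}$ and the standard description of inertia subgroups at discrete valuations, while $(3) \Rightarrow (1)$ is the substantive step, relying on Zariski--Nagata purity of the branch locus, which applies because $R$ is smooth over $\Z$ and in particular regular.

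For $(1) \Rightarrow (2)$, the morphism $\Spec F \to \Spec R$ factors through $\Spec R_{\p}$ for each height $1$ prime $\p$, so the induced map $\pi_{1}(\Spec F, \overline{s}) \to \pi_{1}(\Spec R, \overline{s})$ factors through $\pi_{1}(\Spec R_{\p}, \overline{s})$, and any descent through $\pi_{1}(\Spec R, \overline{s})$ automatically restricts to a descent through $\pi_{1}(\Spec R_{\p}, \overline{s})$. For $(2) \Rightarrow (3)$, since $R_{\p}$ is a DVR with fraction field $F$, finite \'etale $R_{\p}$-algebras correspond to finite subextensions of $\overline{F}/F$ unramified at the chosen extension $\overline{v}$ of $\p$. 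Thus the natural map $\pi_{1}(\Spec F, \overline{s}) \twoheadrightarrow \pi_{1}(\Spec R_{\p}, \overline{s})$ is surjective with kernel equal to the normal closure of $I_{\overline{v}}$, so if the action of $\pi_{1}(\Spec F, \overline{s})$ on $M$ factors through this quotient, then in particular $I_{\overline{v}}$ acts trivially on $M$.

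For $(3) \Rightarrow (1)$, the closedness hypothesis guarantees that $G \coloneqq \pi_{1}(\Spec F, \overline{s})/\Ker(\pi_{1}(\Spec F, \overline{s}) \to \Aut(M))$ is a profinite group; write $G = \varprojlim_{i} G_{i}$ as an inverse limit of finite quotients. Each surjection $\pi_{1}(\Spec F, \overline{s}) \twoheadrightarrow G_{i}$ corresponds to a connected Galois finite \'etale cover $\Spec F_{i} \to \Spec F$, and condition $(3)$ ensures that $F_{i}/F$ is unramified at every height $1$ prime of $R$ (since a conjugate of a trivially-acting inertia group still acts trivially). Zariski--Nagata purity of the branch locus, applied to the regular scheme $\Spec R$, then produces a unique extension of $\Spec F_{i} \to \Spec F$ to a finite \'etale cover of $\Spec R$, so $\pi_{1}(\Spec F, \overline{s}) \twoheadrightarrow G_{i}$ factors through $\pi_{1}(\Spec R, \overline{s})$. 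Taking the inverse limit over $i$ yields the required factorization for $G$, hence the descent of the action on $M$ to $\pi_{1}(\Spec R, \overline{s})$.

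The main obstacle is the $(3) \Rightarrow (1)$ step: one must apply Zariski--Nagata purity (legitimate because $R$ is regular) and then pass cleanly to the inverse limit in order to reduce from the general profinite action on $M$ to the case of finite Galois \'etale covers, with the closedness of $\Ker(\pi_{1}(\Spec F, \overline{s}) \to \Aut(M))$ doing exactly the work needed to make this reduction valid.
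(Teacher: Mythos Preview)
Your proof is correct and follows essentially the same approach as the paper: both arguments hinge on Zariski--Nagata purity (valid since $R$ is regular) to handle the passage from ``unramified at all height $1$ primes'' to ``extends over $\Spec R$'', and both identify the kernel of $\pi_{1}(\Spec F,\overline{s})\to\pi_{1}(\Spec R_{\mathfrak{p}},\overline{s})$ with the closed normal subgroup generated by the inertia groups. The only cosmetic difference is that the paper organizes the argument as two biconditionals $(1)\Leftrightarrow(2)$ and $(2)\Leftrightarrow(3)$ via the description $F_{R}^{\mathrm{ur}}=\bigcap_{\mathrm{ht}(\mathfrak{p})=1}F_{R_{\mathfrak{p}}}^{\mathrm{ur}}$, whereas you run the cyclic chain $(1)\Rightarrow(2)\Rightarrow(3)\Rightarrow(1)$ and invoke purity directly on the finite Galois covers arising from the profinite quotient $G$.
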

\begin{proof}
First, we recall that $\pi_{1}(\Spec F, \overline{s}) \rightarrow \pi_{1}(\Spec R, \overline{s})$ is surjective and its kernel is identified with $\Gal(\overline{F}/F_{R}^{\textup{ur}})$, where $F_{R}^{\textup{ur}}$ is the composite of finite extensions $E/F$ which are unramified over $\Spec R$ (\cite[Proposition 3.3.6]{Fu2015}). Here, we say $E/F$ is unramified over $\Spec R$ if the normalization of $\Spec R$ in $E$ is unramified over $\Spec R$. Same results hold for $R_{\p}$.  

$(1)\Leftrightarrow (2)$
By the assumption on $M$, it suffices to show that
\[
\Ker(\pi_{1}(\Spec F, \overline{s})\rightarrow \pi_{1}(\Spec R, \overline{s}))
\]
is generated by 
\[
(\Ker(\pi_{1}(\Spec F, \overline{s})\rightarrow \pi_{1}(\Spec R_{\p}, \overline{s})))_{\p}
\]
as a topological group.
By the above remark, it is enough to show that $F^{\textup{ur}}_{R}$ = $\bigcap_{\textup{ht}(\p)=1}F^{\textup{ur}}_{R_{\p}}$. 
The inclusion
$F^{\textup{ur}}_{R} \subset \bigcap_{\textup{ht}(\p)=1}F^{\textup{ur}}_{R_{\p}}$ is obvious, and another direction follows from the Zariski--Nagata purity.

$(2)\Leftrightarrow (3)$
By the assumption on $M$, it suffices to show that $\Ker(\pi_{1}(\Spec F, \overline{s})\rightarrow \pi_{1}(\Spec R_{\p}, \overline{s}))$ is generated by $(I_{\overline{v}})_{\overline{v} \textup{ over } \p}$ as a topological group, but it follows from the above remark.
\end{proof}
\begin{remark}
The condition `$M$ is unramified at $\p$' does not depend on a choice of $\overline{v}$. Indeed, for each $\p$, the inertia group $I_{\overline{v}}$ is determined by $\p$ up to conjugation in $\Gal(\overline{F}/F).$
\end{remark}

The following are the statements of results of this section (for more generalized statements, see Theorem \ref{main}).
\begin{theorem}\label{genp}
Let $F$ be a finitely generated field over $\Q$, $R$ be a smooth algebra over $\Z$ which is an integral domain with the fraction field $F$, and $d$ be a positive integer. Then, the set
\[
\Shaf(F,R,d) \coloneqq
\left\{
(X, L)
\left|
\begin{array}{l}
X \colon\textup{K3 surface over }F, \\
L \in \Pic_{X/F}(F)\colon \textup{primitive ample},\\
H^{2}_{\et}(X_{\overline{F}}, \Q_{2}) \colon \textup{unramified over } \Spec R,\\
\deg L=2d
\end{array}
\right.
\right\} /F\textup{-isom}.
\]
is finite.
\end{theorem}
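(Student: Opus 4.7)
I follow the strategy of Andr\'e and She, using the uniform Kuga--Satake correspondence of Section~3 to transfer the problem to the Shafarevich theorem for abelian varieties; the key bridge is Proposition~\ref{uks}, which converts unramifiedness of $H^{2}$ of the K3 surface into unramifiedness of $H^{1}$ of the associated Kuga--Satake abelian variety.

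Fix $n = 2^{N}$ a sufficiently large power of $2$ so that $D_{d}(n)$ satisfies the hypothesis of Proposition~\ref{Torelli}. Given $(X, L) \in \Shaf(F, R, d)$, the finite set of orientations and $D_{d}(n)$-level structures on $(X, L)$ carries a $\Gal(\overline{F}/F)$-action of bounded size (Corollary~\ref{D_{d}(n)}) that factors through the Galois representation on $H^{2}_{\et}(X_{\overline{F}}, \Z_{2}(1))$. Since this representation is unramified over $\Spec R$ at every height $1$ prime by hypothesis, the field of definition $F_{(X, L)} \supset E_{n}$ of a Galois-stable tuple $(\nu, \alpha)$ is the function field of a finite \'etale cover of $\Spec R$ of bounded degree (using Zariski--Nagata purity, as $R$ is smooth over $\Z$). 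An analog of Hermite--Minkowski for the regular finite type $\Z$-scheme $\Spec R$ then leaves only finitely many possibilities for $F_{(X, L)}$, so it suffices to treat each $F' \coloneqq F_{(X, L)}$ separately, with $R'$ the normalization of $R$ in $F'$.

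\medskip

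The tuple $(X, L, \nu, \alpha) \in M_{2d, D_{d}(n), F'}^{\circ}(F')$ yields via $\Delta_{d}$ a polarized abelian variety $A \coloneqq A^{(X, L, \alpha)}$ over $F'$ of dimension $2^{24}$ with polarization of degree $r$, level $n$ structure, and left $C(\la)$-action (Proposition~\ref{uks}(2)--(3)). By Proposition~\ref{uks}(1), the lattice $P^{2}_{\et}((X, L), \Z_{2}(1))$ sits Galois-equivariantly inside $\la_{\Z_{2}, (X, L, \alpha)}$ as an orthogonal summand whose complement carries the trivial Galois action, so unramifiedness of $H^{2}_{\et}(X_{\overline{F}}, \Q_{2})$ over $\Spec R$ yields unramifiedness of $\la_{\Z_{2}, (X, L, \alpha)} \otimes \Q_{2}$ over $\Spec R'$; the Clifford algebra identification in Proposition~\ref{uks}(3) upgrades this to unramifiedness of $H^{1}_{\et}(A_{\overline{F}}, \Q_{2})$. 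The N\'eron--Ogg--Shafarevich criterion, combined with Lemma~\ref{prep}, then gives that $A$ has good reduction at every height $1$ prime of $R'$.

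\medskip

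For each of the finitely many $F'$, Faltings' Shafarevich theorem for abelian varieties over finitely generated fields of characteristic $0$ bounds the number of such polarized $A/F'$ up to $F'$-isomorphism. The map $\Delta_{d}$ is quasi-finite, being the \'etale period map $j$ of Proposition~\ref{Torelli} composed with the (quasi-)finite Shimura morphisms of diagram~$(\ast)$; consequently each $A$ has only finitely many preimages $(X, L, \nu, \alpha)$. Forgetting $(\nu, \alpha)$ and descending from $F'$-isomorphism to $F$-isomorphism via finiteness of polarized $F'/F$-twists (ensured by the finiteness of the polarized automorphism group of a K3 surface) finishes the proof.

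\medskip

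\noindent\textbf{Main obstacle.}
The essential input is Proposition~\ref{uks}(1), (3), which carries unramifiedness across the Kuga--Satake correspondence: unramifiedness of K3 $H^{2}$ at $\ell = 2$ (the prime controlling the Clifford algebra and the $2$-power level structure) must translate into unramifiedness of AV $H^{1}$ at $\ell = 2$. The main technical subtlety is the control of the variable base extensions $F_{(X, L)}/F$, which is made tractable by the observation that the Galois action on level structures factors through the Galois representation on $H^{2}_{\et}$: this forces $F_{(X, L)}/F$ to inherit unramifiedness over $\Spec R$, leaving only finitely many possibilities by a Hermite--Minkowski-type finiteness for $\pi_{1}^{\et}(\Spec R)$.
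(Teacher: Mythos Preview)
Your overall strategy matches the paper's, and the reduction steps (uniform finite extension via Hermite--Minkowski for $\pi_1(\Spec R)$, finiteness of polarized twists, Faltings for abelian varieties, quasi-finiteness of $\Delta_d$) are essentially the same as in the paper. However, there is a genuine gap at the step where you claim that ``the Clifford algebra identification in Proposition~\ref{uks}(3) upgrades this to unramifiedness of $H^1_{\et}(A_{\overline{F}},\Q_2)$.''

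The isomorphism $H^1_{\et}(A_{\overline{F}},\Z_2)\simeq C(\la_{\Z_2,(X,L,\alpha)})$ of Proposition~\ref{uks}(3) is only an isomorphism of $\Z_2$-modules (compatible with the right $C(\la_{\Z_2})$-action); it is \emph{not} $\Gal(\overline{F}/F')$-equivariant, and cannot be, since the Galois action on $H^1_{\et}$ is by left multiplication through $\GSpin$ while the functorial action on $C(\la_{\Z_2,(X,L,\alpha)})$ coming from the action on $\la$ is through $\SO$, and these differ by the central $\Gm$. What one actually obtains, using Proposition~\ref{uks}(4) (which you do not invoke), is that for $\gamma$ in inertia the operator $\varphi(\gamma)$ lies in the \emph{center} of $C(\la_{\Q_2,(X,L,\alpha)})$. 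This does not yet give $\varphi(\gamma)=1$. The paper closes this gap in Proposition~\ref{good} with two further inputs you omit: the level-$n$ structure on $A$ (with $n\geq 3$ a power of $2$ and $1/2\in R$) forces semi-abelian reduction at each height~$1$ prime by Raynaud's criterion, so $\varphi(\gamma)$ is unipotent; and the center of $C(\la_{\Q_2})$ is reduced, so a unipotent central element is the identity.
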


\begin{theorem}\label{genunp}
Let $F$ be a finitely generated field over $\Q$, and $R$ be a smooth algebra over $\Z$ which is an integral domain with the fraction field $F$. Then, the set
\[
\Shaf(F,R)\coloneqq
\left\{
X\left|
\begin{array}{l}
X \colon\textup{K3 surface over }F,\\
H^{2}_{\et}(X_{\overline{F}},\Q_{2})\colon \textup{unramified over } \Spec R
\end{array}
\right.
\right\}/F\textup{-isom}
\]
is finite.
\end{theorem}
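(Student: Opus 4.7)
The plan is to reduce Theorem \ref{genunp} to its polarized counterpart Theorem \ref{genp} by bounding the degrees of primitive polarizations that can appear on K3 surfaces $X \in \Shaf(F,R)$. For each such $X$, pick a primitive ample line bundle $L_X \in \Pic_{X/F}(F)$ of some degree $2d(X)$; then $(X, L_X) \in \Shaf(F, R, d(X))$, so it suffices to bound the set $\{d(X) : X \in \Shaf(F, R)\}$, after which Theorem \ref{genp} applied to each $d$ up to the bound yields the desired finiteness. The key tool is the uniform Kuga--Satake construction of Section $3$, which handles all degrees simultaneously by landing in a single moduli space of abelian varieties.

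To set up the Kuga--Satake picture uniformly in $d$, I would fix a large power of $2$, call it $n$, so that diagram $(\ast)$ and the section $\delta$ are defined over a finite extension $E_n/\Q$, and replace $F$ by $F \cdot E_n$. By Corollary \ref{D_{d}(n)}, the index $[D_d : D_d(n)]$ is bounded by $N \cdot n^{2^{20}}$ independently of $d$, so each $X$ can be pulled back to a finite extension $F'/F$ of uniformly bounded degree over which $(X, L_X)$ admits an orientation $\nu$ and a $D_{d(X)}(n)$-level structure $\alpha$. Let $R'$ be the normalization of $R$ in $F'$; unramifiedness of $H^2_{\et}(X_{\overline F}, \Q_2)$ over $\Spec R$ pulls back to unramifiedness over $\Spec R'$. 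Applying the uniform Kuga--Satake construction yields an abelian variety $A_X \coloneqq A^{(X, L_X, \alpha)}$ over $F'$ of fixed dimension $2^{24}$, equipped with a polarization of fixed degree $r$, a level-$n$ structure, and a left $C(\la)$-action.

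Next, I would transfer unramifiedness to the Kuga--Satake side and invoke Faltings. The inclusion $P^2_{\et}((X_{\overline F}, L_{\overline F}), \Q_2(1)) \subset H^2_{\et}(X_{\overline F}, \Q_2(1))$ shows the primitive part is unramified over $\Spec R'$; by Proposition \ref{uks}(1), the enveloping Galois representation $\la_{\Q_2, (X, L_X, \alpha)}$ differs from it only by a trivial summand and is therefore also unramified. Proposition \ref{uks}(3) identifies $H^1_{\et}((A_X)_{\overline{F'}}, \Q_2)$ with $C(\la_{\Q_2, (X, L_X, \alpha)})$ as Galois modules, so $H^1_{\et}((A_X)_{\overline{F'}}, \Q_2)$ is unramified at every height-$1$ prime of $R'$. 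The N\'eron--Ogg--Shafarevich criterion then gives good reduction of $A_X$ over $\Spec R'$, and the Faltings--Zarhin finiteness theorem for polarized abelian varieties, available over finitely generated fields of characteristic $0$, bounds the set of such $A_X$ (with polarization and level-$n$ structure) to finitely many isomorphism classes.

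The main obstacle is to pass from this finiteness of the $A_X$ to a bound on the degrees $d(X)$. Given $A_X$ together with its $C(\la)$-action, Proposition \ref{uks}(4) recovers the Galois lattice $\la_{\Z_2, (X, L_X, \alpha)}$; Proposition \ref{uks}(1) distinguishes $P^2_{\et}((X_{\overline F}, L_{\overline F}), \Z_2(1))$ inside it as the non-trivial-Galois summand, and under the primitive embedding of this into $\la_{d(X)} \hookrightarrow \la$ the integer $d(X)$ appears as the discriminant of the orthogonal complement. Making this precise amounts to showing that the forgetful map $(X, L_X, \nu, \alpha) \mapsto A_X$ with all extra structure has finite fibres, uniformly in $d$. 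For this I would adapt She's unpolarized argument in \cite[Section 3]{She2017}, using the good reduction of $A_X$ supplied by the N\'eron--Ogg--Shafarevich step above in place of her good-reduction hypothesis on $X$ itself. Once $d(X)$ is bounded, Theorem \ref{genp} completes the proof.
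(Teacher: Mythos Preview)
Your overall architecture matches the paper's: pass to level structures uniformly in $d$, apply the uniform Kuga--Satake map, invoke Faltings--Zarhin, then bound the polarization data. Two intermediate claims, however, are wrong as stated. First, Proposition \ref{uks}(3) does \emph{not} identify $H^1_{\et}((A_X)_{\overline{F'}}, \Q_2)$ with $C(\la_{\Q_2,(X,L_X,\alpha)})$ as Galois modules; the isomorphism there is only of $\Z_\ell$-modules compatible with the $C(\la)^{\opp}$-action. Unramifiedness of $\la_{\Q_2,(X,L_X,\alpha)}$ forces the inertia image only to be \emph{central} in $C(\la_{\Q_2})$, and one then needs Raynaud's semi-abelian reduction criterion together with a unipotence argument to conclude triviality. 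This is precisely Proposition \ref{good}; cite it rather than the incorrect shortcut. (Separately, your $F'$ must be a \emph{single} extension working for all $X$, arranged via the Hermite--Minkowski theorem for $\pi_1(\Spec R)$ as in Proposition \ref{redunp}; otherwise the $A_X$ live over varying base fields and Faltings--Zarhin does not apply.)

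The more serious gap is your recovery of $d(X)$. You assert that Proposition \ref{uks}(1) singles out $P^2_{\et}$ inside $\la_{\Z_2,(X,L_X,\alpha)}$ as ``the non-trivial-Galois summand''; this is false, because the Galois-invariant part contains not only the trivial complement $(P^2_{\et})^\perp$ but also the image of $\Pic_{X/F}(F)$ sitting inside $P^2_{\et}$ via the Chern class. Neither $P^2_{\et}$ nor $d(X)$ can be read off from the Galois lattice. What \emph{can} be recovered, and this is where the Tate conjecture enters (Lemma \ref{tate}), is the transcendental lattice $T(X)_{\Z_\ell} = (\la_{\Z_\ell,(X,L_X,\alpha)}^{\Gal(\overline F/F)})^\perp$. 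The paper then uses $\disc(T(X)) = \disc(\Pic_{X/F}(F))$ (Remark \ref{samedisc}), Cassels's finiteness of lattice classes of bounded rank and discriminant, and the fact that the isometry class of the Picard lattice bounds the minimal primitive polarization degree (Lemma \ref{Pic}, via \cite[Proposition 4.1.2]{She2017}) to finish. Your deferral to She at the end would eventually lead you to this chain, but the specific mechanism you wrote down does not work.
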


\begin{remark}\label{avoidl}
For a non-empty open subscheme $\Spec(R')\subset \Spec(R)$, the finiteness of $\Shaf(F, R', d)$ (resp.\ $\Shaf(F, R')$) clearly implies the finiteness of $\Shaf(F, R, d) $ (resp.\ $\Shaf(F, R)$). 
Thus, to prove Theorem \ref{genp} and Theorem \ref{genunp}, we may assume $1/2\in R$. 
Note that it is equivalent to say that the residual characteristic at any point of $\Spec R$ is different from $2$.
\end{remark}

\subsection{Proof of Theorem \ref{genp}}
In this subsection, we use the same notation as Theorem \ref{genp}, unless otherwise noted. First, for using the Kuga--Satake construction, we will replace $F$ by an appropriate finite extension of it to provide a level structure on $(X,L)\in \Shaf(F, R, d)$. The following lemma is essential for justifying this replacement.

\begin{lemma}\label{twists}
Let $E/F$ be a finite extension, $X_{0}$ be a K3 surface over $F$, and
$L_{0}\in \Pic_{X_{0}/F}(F)$ be a polarization. Then, the set
\[
\left\{
(X,L)
\left|
\begin{array}{l}
X \textup{ is a K3 surface over }F,\\
L \in \Pic_{X/F}(F)\colon \textup{ample},\\
(X_{E}, L_{E}) \simeq_{E} (X_{0,E},L_{0,E})
\end{array}
\right.
\right\}/ F\textup{-isom}
\]
is finite.
\end{lemma}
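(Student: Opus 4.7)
The plan is to recognize the set in question as a set of twists, classified by a non-abelian Galois cohomology pointed set, and then to reduce its finiteness to the (classical) finiteness of the automorphism group of a polarized K3 surface.

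First, I would replace $E$ by its Galois closure $\widetilde{E}/F$: since $(X_E,L_E)\simeq_E(X_{0,E},L_{0,E})$ implies $(X_{\widetilde E},L_{\widetilde E})\simeq_{\widetilde E}(X_{0,\widetilde E},L_{0,\widetilde E})$, and $\widetilde E/F$ is still a finite extension, it is enough to prove the statement assuming $E/F$ is finite Galois with group $\Gamma\coloneqq\Gal(E/F)$. Since $L$ and $L_0$ are ample, both $X$ and $X_0$ are projective $F$-schemes (so the algebraic-space subtlety does not intervene). Then by the standard descent/twisting formalism, assigning to $(X,L)$ the class of the $\Gamma$-cocycle $\sigma\mapsto \varphi^{-1}\circ\sigma(\varphi)$, where $\varphi\colon (X_E,L_E)\xrightarrow{\sim}(X_{0,E},L_{0,E})$ is any choice of $E$-isomorphism, induces a bijection
\[
\Tw_{E/F}(X_0,L_0)\;\xrightarrow{\sim}\; H^{1}\bigl(\Gamma,\,\Aut_{E}(X_{0,E},L_{0,E})\bigr),
\]
where on the left we take $F$-isomorphism classes of polarized K3 surfaces $(X,L)$ becoming isomorphic to $(X_{0,E},L_{0,E})$ over $E$.

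The key input now is that the automorphism group $G\coloneqq \Aut_{E}(X_{0,E},L_{0,E})$ of a polarized K3 surface is \emph{finite}; this is classical (it follows, for example, from the fact that $G$ injects into the orthogonal group of the N\'eron--Severi lattice and preserves the ample cone of the polarization class, giving a discrete subgroup of a compact group). With $G$ a finite $\Gamma$-group and $\Gamma$ itself finite, the pointed cohomology set $H^{1}(\Gamma,G)$ is automatically finite: it is a quotient of the finite set of continuous $1$-cocycles $Z^{1}(\Gamma,G)\subset G^{\Gamma}$ by the action of the finite group $G$. This gives the finiteness of $\Tw_{E/F}(X_{0},L_{0})$, which is exactly the claim of the lemma.

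I do not expect a genuine obstacle here: the argument is essentially formal once one invokes the finiteness of $\Aut$ of a polarized K3 surface. The only point to watch is ensuring that the cocycle construction is well defined for polarized K3 surfaces as algebraic spaces, but ampleness of $L$ makes $X$ projective, reducing everything to standard Galois descent for projective varieties with extra structure. Note, in particular, that we do not need to prescribe the degree of $L$ since this is automatically equal to $\deg L_0$ under the hypothesis.
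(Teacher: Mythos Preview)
Your proof is correct and follows essentially the same route as the paper: reduce to $E/F$ Galois, identify the set of twists with $H^{1}(\Gal(E/F),\Aut_{E}(X_{0,E},L_{0,E}))$, and conclude by the finiteness of the automorphism group of a polarized K3 surface (the paper cites \cite[Chapter~5, Proposition~3.3]{Huybrechts2016} for this last input).
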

\begin{proof}
Taking the Galois closure of $E$ in $\overline{F}$ , we may assume that $E/F$ is a Galois extension. Then we can identify this set with the Galois cohomology group $H^{1}(\Gal(E/F), \Aut_{E}(X_{0}, L_{0}))$.
The finiteness of this set follows from \cite[Chapter 5, Proposition 3.3]{Huybrechts2016}.
\end{proof}

\begin{lemma}[{cf.\ \cite[Lemma 8.4.1]{Andre1996}}]\label{andre}
Let $X$ be a K3 surface over $F$, $L\in \Pic_{X/F}(F)$ be a primitive polrization of degree $2d$ on $X$ over $F$, and $n$ be a positive integer. 
We put 
\[
W_{\widehat{\Z}} \coloneqq P^{2}_{\et}((X_{\overline{F}},L_{\overline{F}}),\widehat{\Z}(1)).
\] 
Let 
\[
\rho \colon \Gal(\overline{F}/F)\rightarrow \algO(W_{\widehat{\Z}})
\]
be the natural Galois representation. 
Fix an isometry 
\[
i_{(X,L)}\colon \la_{K3,\widehat{\Z}}\simeq H^{2}_{\et}(X_{\overline{F}},\widehat{\Z}(1)),
\]
which restricts to an isometry
$\la_{K3} \simeq H^{2}(X(\C),\Z(1))$,
and which sends $v_{d}$ to $\ch_{\widehat{\Z}}(L)$ $($see Remark \ref{remlattice}$)$. Using $i_{(X,L)}$, we identify $D_{d}(n)$ with a compact open subgroup of $\SO(W_{\widehat{\Z}})$.  Then, for any finite extension $E/F$, we have
\[
\rho(\Gal(\overline{F}/E))\subset D_{d}(n)
\Leftrightarrow  \rho_{\ell}(\Gal(\overline{F}/E))\subset (D_{d}(n))_{\ell} \textup{ for every } \ell \mid 2n.
\]
\end{lemma}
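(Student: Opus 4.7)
The plan is to prove the nontrivial implication ($\Leftarrow$) by showing that at any prime $\ell \nmid 2n$ the inclusion $\rho_\ell(\Gal(\overline{F}/E)) \subset (D_d(n))_\ell$ is automatic from the geometry of $(X,L)$, with no further hypothesis required. The forward direction ($\Rightarrow$) is immediate, since $D_d(n) \subset \SO(\la_{d,\widehat{\Z}})$ decomposes as the product of its $\ell$-components.

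For the converse, I would first identify $(D_d(n))_\ell$ at primes $\ell \nmid 2n$. Writing $n_\ell$ for the $\ell$-part of $n$, the hypothesis $\ell \nmid n$ makes the defining congruence for $\K_{d,n}^{\spp}$ vacuous at $\ell$, so $(\K_{d,n}^{\spp})_\ell = \GSpin(\la_{d,\Z_\ell})$. Applying Proposition~\ref{surjtodisc} (whose odd case applies since $\ell \neq 2$) then gives the equality $(D_d(n))_\ell = f_d(\GSpin(\la_{d,\Z_\ell})) = (D_d)_\ell$. Consequently, it suffices to prove the inclusion $\rho_\ell(\Gal(\overline{F}/F)) \subset (D_d)_\ell$ for every prime $\ell$.

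Via $i_{(X,L)}$, the Galois representation on $H^{2}_{\et}(X_{\overline{F}}, \Z_\ell(1)) \simeq \la_{K3,\Z_\ell}$ preserves the integral lattice and its bilinear form and fixes the algebraic class $\ch_{\Z_\ell}(L) = v_d$. Provided one knows that this action has trivial determinant character, Proposition~\ref{disc} then places $\rho_\ell(\Gal(\overline{F}/F))$ inside $(D_d)_\ell$ for every $\ell$. Combined with the hypothesis at primes $\ell \mid 2n$ and with the equality $(D_d(n))_\ell = (D_d)_\ell$ just established for $\ell \nmid 2n$, this yields $\rho(\Gal(\overline{F}/E)) \subset \prod_\ell (D_d(n))_\ell = D_d(n)$, as required.

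The main technical point I expect to require care is the orientability assertion, i.e., the passage from $\algO(W_{\widehat{\Z}})$ to $\SO(W_{\widehat{\Z}})$. I would handle this via the standard fact that for a K3 surface the one-dimensional representation $\det H^{2}_{\et}(X_{\overline{F}}, \Z_\ell(1))$ is trivial. One can argue by spreading out to reduce to a subfield of $\C$, where the comparison with Betti cohomology (under which the integral isometry $i_{(X,L)}$ preserves the natural orientation) gives triviality of the determinant character. With this in hand, the lemma reduces to the $\ell$-component bookkeeping above.
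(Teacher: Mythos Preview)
Your overall strategy matches the paper's: show that for $\ell \nmid 2n$ one has $(D_d(n))_\ell = (D_d)_\ell$, and then verify $\rho_\ell(\Gal(\overline F/E)) \subset (D_d)_\ell$ at those primes. The identification $(D_d(n))_\ell = (D_d)_\ell$ via Proposition~\ref{surjtodisc} and the use of Proposition~\ref{disc} to land in $(D_d)_\ell$ once you are inside $\SO$ are both exactly what the paper does.

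The gap is at the orientability step. Your proposed justification --- that triviality of $\det\rho_\ell$ follows from the Betti--\'etale comparison and the fact that $i_{(X,L)}$ restricts to an integral isometry $\la_{K3}\simeq H^2(X(\C),\Z(1))$ --- does not work. The comparison isomorphism carries no Galois action on the Betti side, so it cannot by itself determine the sign character $\det\rho_\ell\colon \Gal(\overline F/F)\to\{\pm 1\}$; knowing that $i_{(X,L)}$ is integrally compatible with Betti only fixes a basis, not the Galois determinant in that basis. The question of whether a $(X,L)$ over $F$ lifts to the oriented double cover $\widetilde{M}_{2d,\Q}^\circ$ is precisely the question of whether this character is trivial, and this is not a formal consequence of comparison.

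The paper handles this point differently and in a way that genuinely uses the hypothesis. Andr\'e's Claim (proved by spreading out over a finite-type base, specializing to closed points with finite residue field, and invoking the Weil conjectures together with Chebotarev density) shows that if $\rho_\ell(\Gal(\overline F/E))\subset\SO(W_{\Z_\ell})$ for \emph{one} prime $\ell$, then it holds for all $\ell$. Since your hypothesis gives $\rho_\ell(\Gal(\overline F/E))\subset (D_d(n))_\ell\subset\SO(W_{\Z_\ell})$ for each $\ell\mid 2n$, this furnishes the needed input; the conclusion $\rho_\ell(\Gal(\overline F/E))\subset\SO(W_{\Z_\ell})$ for all $\ell$ then combines with Proposition~\ref{disc} exactly as you outline. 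So the fix is to replace your Betti argument by Andr\'e's $\ell$-independence of the determinant (equivalently, of landing in $\SO$), which is where the actual arithmetic content lies.
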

\begin{proof}
In the following, we identify $\SO(\la_{d,\widehat{\Z}})$ with $\SO(W_{\widehat{\Z}})$ vie $i_{(X,L)}$.  This lemma is essentially shown in \cite[Lemma 8.4.1]{Andre1996}. Andr$\e$ shows the following claim in the proof of \cite[Lemma 8.4.1]{Andre1996}, using specialization arguments and the Weil conjecture.

\begin{claim*}
If there exists a prime number $\ell$ such that $\rho_{\ell}(\Gal(\overline{F}/E))\subset \SO(W_{\Z_{\ell}})$,
then $\rho(\Gal(\overline{F}/E))\subset \SO(W_{\widehat{\Z}})$.
\end{claim*}
Andr$\e$ states that the above claim implies the following result.
\[
\rho(\Gal(\overline{F}/E))\subset D_{d}(n)
\Leftrightarrow  \rho_{\ell}(\Gal(\overline{F}/E))\subset (D_{d}(n))_{\ell} \textup{ for every } \ell \mid 2dn.
\]
Indeed, for $\ell\nmid 2dn$, we have $(D_{d}(n))_{\ell}=\SO(\la_{d,\Z_{\ell}})$.

More generally, for $\ell \nmid 2n$, we have $(D_{d}(n))_{\ell}=(D_{d})_{\ell}$. (See Corollary \ref{D_{d}(n)}). Therefore, to generalize Andr$\e$'s result to our lemma, it is enough to show that if $\rho_{\ell}(\Gal(\overline{F}/E))\subset \SO(W_{\Z_{\ell}}),$ then $\rho_{\ell}(\Gal(\overline{F}/E))\subset (D_{d})_{\ell}$. However, since $\Gal(\overline{F}/F)$ stabilizes $\ch_{\Z_{\ell}}(L)$, it follows from our description of the discriminant kernel
\[
(D_{d})_{\ell}=\{\tilde{g}_{\ell}\in\SO(H^{2}_{\et}(X_{\overline{F}}, \Z_{\ell}))\mid\tilde{g}_{\ell}(\ch_{\Z_{\ell}}(L))=\ch_{\Z_{\ell}}(L)\},
\] 
which follows from Proposition \ref{disc}.
\end{proof}

In the rest of this section, fix a positive integer $n$ which is a sufficiently large power of $2$.

\begin{prop}\label{redp}
To prove Theorem \ref{genp}, it is enough to show that
\[
\Shaf'(F, R, d)\coloneqq
\left\{
(X, L)
\left|
\begin{array}{l}
X \colon \textup{K3 surface over }F, \\
L \in \Pic_{X/F}(F)\colon \textup{ primitive ample},\\
H^{2}_{\et}(X_{\overline{F}}, \Q_{2}) \colon \textup{unramified over } \Spec R,\\
\deg L=2d,\\
(X, L) \textup{ admits a } D_{d}(n) \textup{-level structure}
\end{array}
\right.
\right\} /F\textup{-isom}
\]
is a finite set for any $F, R, d$ as in Theorem \ref{genp}. Moreover, if we fix a number field $F'$, it suffices to show only in the case where $F\supset F'$ and $1/2\in R$.

Here, `$(X, L)$ admits a $D_{d}(n)$-level structure' means that
there exists an element $(X, L, \nu_{(X, L)}, \alpha_{(X, L)})$ in $M_{2d,D_{d}(n),F}^{\circ}(F)$.

\end{prop}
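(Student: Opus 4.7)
The plan is to reduce Theorem \ref{genp} to the finiteness of $\Shaf'(F'', R'', d)$ over a suitable finite extension $F''/F$ unramified over $\Spec R$, by exhibiting a \emph{single} finite base change that simultaneously equips every $(X, L) \in \Shaf(F, R, d)$ with a $D_d(n)$-level structure; then Lemma \ref{twists} controls the fibers of the resulting base-change map. The key input is the uniform index bound of Corollary \ref{D_{d}(n)}, which is exactly what permits such a simultaneous trivialization and which crucially exploits that $n$ is a power of $2$.

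First, by Remark \ref{avoidl}, assume $1/2 \in R$. Fix $(X, L) \in \Shaf(F, R, d)$ and an isometry $i_{(X, L)}$ as in Lemma \ref{andre}. Since $L$ is $F$-rational, the Galois representation $\rho$ on $H^2_\et(X_{\overline{F}}, \widehat{\Z}(1))$ fixes $\ch_{\widehat{\Z}}(L) = v_d$, so by Proposition \ref{disc} its image sits inside a subgroup of $\algO(\la_{K3, \widehat{\Z}})$ that contains $D_d$ with index at most $2$. Because $n$ is a power of $2$, Lemma \ref{andre} says that $\rho(\Gal(\overline{F}/E)) \subset D_d(n)$ is equivalent to $\rho_2(\Gal(\overline{F}/E)) \subset (D_d(n))_2$, and Corollary \ref{D_{d}(n)} provides the $(X, L)$-independent bound $[(D_d)_2 : (D_d(n))_2] \leq N \cdot n^{2^{20}}$. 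Combining these, the open subgroup $\rho^{-1}(D_d(n)) \subset \Gal(\overline{F}/F)$ has index uniformly bounded in $(X, L)$.

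Because $H^2_\et(X_{\overline{F}}, \Q_2)$ is unramified over $\Spec R$, Lemma \ref{prep} ensures that $\rho_2$ factors through $\pi_1(\Spec R, \overline{s})$, so the finite extension $E_{(X,L)}/F$ cut out by $\rho^{-1}(D_d(n))$ is unramified over $\Spec R$ of uniformly bounded degree. I then invoke the Hermite-type finiteness for normal finite-type $\Z$-schemes: only finitely many finite \'etale covers of $\Spec R$ have bounded degree, so the compositum of all $E_{(X,L)}$ is a single finite extension of $F$. Enlarging to contain the prescribed number field $F'$ yields a finite extension $F''/F$ unramified over $\Spec R$. Let $R''$ be the normalization of $R$ in $F''$, shrunk if necessary to be smooth and to contain $1/2$.

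For every $(X, L) \in \Shaf(F, R, d)$, the base change $(X_{F''}, L_{F''})$ now has Galois image in $D_d(n) \subset \SO$, so both an $F''$-rational orientation $\nu$ (automatic from $\det \rho = 1$) and an $F''$-rational $D_d(n)$-level structure $\alpha$ (the $D_d(n)$-orbit of $i_{(X, L)}$, now Galois-stable) exist, placing $(X_{F''}, L_{F''}) \in \Shaf'(F'', R'', d)$. Granting finiteness of the latter, Lemma \ref{twists} applied over a Galois closure of $F''/F$ gives finite fibers for $\Shaf(F, R, d) \to \Shaf'(F'', R'', d)$, whence finiteness of $\Shaf(F, R, d)$. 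The main obstacle is the uniform index bound, supplied by Corollary \ref{D_{d}(n)}; the rest is standard Hermite-type and twist bookkeeping.
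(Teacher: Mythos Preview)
Your proposal is correct and follows essentially the same route as the paper's proof: bound the index of $\rho_2^{-1}((D_d(n))_2)$ uniformly in $(X,L)$, use the Hermite--Minkowski-type finiteness for $\pi_1(\Spec R)$ to pass to a single finite \'etale cover, and finish with Lemma \ref{twists}. The only cosmetic differences are that the paper uses the $d$-dependent constant $C_d = [\algO(\la_{d,\Z_2}):(D_d(n))_2]$ (sufficient here since $d$ is fixed) whereas you invoke the $d$-uniform bound of Corollary \ref{D_{d}(n)} (overkill for Proposition \ref{redp}, but exactly what is reused in Proposition \ref{redunp}), and the paper phrases the passage to a common cover as an intersection of open subgroups rather than a compositum of fields.
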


\begin{proof}
We should prove the finiteness of $\Shaf(F,R,d).$ By Remark \ref{avoidl}, we may assume $1/2 \in R$ (so the Tate twist $\otimes \Z_{2}(1)$ does not affect the unramifiedness over $\Spec R$, see Lemma \ref{prep}).

First, we will show that there exists a finite extension $E/F$ such that for any $(X,L)\in \Shaf(F,R,d),$ the pair $(X_{E}, L_{E})$ admits a $D_{d}(n)$-level structure. We fix $(X,L) \in \Shaf(F,R,d)$ and $i_{(X,L)}$, moreover we use the same identification as in Lemma \ref{andre}.
Let 
\[
\overline{\rho}_{2} \coloneqq \overline{\rho}_{(X,L),2}\colon \pi_{1}(\Spec R, \overline{s})\rightarrow \algO(P^{2}_{\e t}((X_{\overline{F}},L_{\overline{F}}), \Z_{2}(1)))
\]
be the representation induced by 
\[
\rho \coloneqq \rho_{(X,L)}\colon \Gal(\overline{F}/F)\rightarrow \algO(P^{2}_{\e t}((X_{\overline{F}},L_{\overline{F}}),\widehat{\Z}(1))).
\]
The inverse image $\overline{\rho}^{-1}_{2}((D_{d}(n))_{2})$ is a finite index subgroup, so we can associate a pointed finite $\e$tale cover $\Spec \tilde{R} \rightarrow \Spec R.$ 
Then we have $\overline{\rho}_{2}(\pi_{1}(\Spec \tilde{R},\overline{s}))\subset (D_{d}(n))_{2}.$ 
The former is equal to $\rho_{2}(\Gal(\overline{F}/\Frac(\tilde{R})))$, and by Lemma \ref{andre}, we can get the $D_{d}(n)$-level structure on 
$(X_{\Frac(\tilde{R})},L_{\Frac(\tilde{R})})$ by $i_{(X,L)}$.

Here, note that 
\begin{align*}
[\pi_{1}(\Spec R, \overline{s})\colon\overline{\rho}^{-1}_{2}((D_{d}(n))_{2})]\leq C_{d}\coloneqq [\algO(\la_{d,\Z_{2}})\colon (D_{d}(n))_{2}],
\end{align*}
where $C_{d}$ is independent of $(X,L)$ and $i_{(X,L)}$.
By the analogue of the Hermite--Minkowski theorem \cite[Proposition 2.3, Theorem 2.9]{Harada2009}, the family of subsets
\[
\mathcal{C}\coloneqq
\{H\subset \pi_{1}(\Spec R, \overline{s}) \colon \textup{open subgroup} \mid [\pi_{1}(\Spec R, \overline{s})\colon H]\leq C_{d}
\}
\]
is finite, therefore 
\[
H_{0} \coloneqq \bigcap_{H\in \mathcal{C}}H
\]
is an open subgroup.
Let $\Spec \tilde{R}_{0}\rightarrow \Spec R$ be the corresponding pointed finite $\e$tale covering, then by the above argument, we can get a $D_{d}(n)$-level structure on $(X_{\Frac(\tilde{R}_{0})},L_{\Frac(\tilde{R}_{0})})$. Hence we now get a desired finite extension $E\coloneqq \Frac(\tilde{R}_{0})$. 

Thus, by using the assumption for $\Shaf'(E, \tilde{R}_{0}, d)$ and Lemma \ref{twists}, we can show the finiteness of $\Shaf(F,R,d)$. Note that the latter statement is clear by Lemma \ref{twists}.
\end{proof}

The following proposition is essentially known by Andr\'{e} (\cite{Andre1996}), and one can prove it as a corollary of the theory of potentially good loci of Shimura variety (\cite{Imai2020}). 

\begin{prop}\label{good}
Assume $F\supset E_{n}$, where $E_{n}$ is as in $3.3$.
For $(X,L,\nu,\alpha)\in M_{2d,D_{d}(n),F}^{\circ}(F)$, let $A^{(X,L,\alpha)}$ be the Kuga--Satake abelian variety as in Proposition \ref{uks}. Let $R$ be a smooth algebra over $\Z$ which is an integral domain with the fraction field $F$, and assume $1/2 \in R$.
Assume that $H^{2}_{\et}(X_{\overline{F}},\Q_{2})$ is unramified over $\Spec R$ $($its Tate twists are unramified too, because $1/2 \in R)$.  Then, for any height 1 prime ideal $\p \in \Spec R$, the abelian variety $A^{(X,L,\alpha)}$ has good reduction at $\p$.
\footnote{If $n$ is a sufficiently large power of $\ell$, then the same argument work with $\ell$ in place of $2$.}
\end{prop}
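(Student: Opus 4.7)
The plan is to apply the N\'eron--Ogg--Shafarevich criterion to $A^{(X,L,\alpha)}$. Since $1/2 \in R$, the residue characteristic $p$ at any height $1$ prime $\p \subset R$ is different from $2$, so good reduction at $\p$ is equivalent to the $\Z_{2}$-Tate module $H^{1}_{\et}(A^{(X,L,\alpha)}_{\overline{F}},\Z_{2})$ being unramified at $\p$. By the uniform Kuga--Satake construction of Section~3.3, the classifying map $\delta \circ i_{d} \circ j \circ t \colon \Spec F \to \Sh_{\K_{n}^{\spp}}(\GSpin_{\la})$ gives a Galois representation
\[
\rho \colon \Gal(\overline{F}/F) \to (\K_{n}^{\spp})_{2} \subset \GSpin(\la_{\Z_{2}})
\]
with $f \circ \rho$ equal to the Galois action on $\la_{\Z_{2},(X,L,\alpha)}$ and $h \circ \rho$ equal to the action on $H^{1}_{\et}(A^{(X,L,\alpha)}_{\overline{F}},\Z_{2}) \simeq C(\la_{\Z_{2},(X,L,\alpha)})$ by Proposition~\ref{uks}(3). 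Since $h \colon \GSpin_{\la} \hookrightarrow \GSp_{V,a}$ is injective (a $\GSpin$-element acts faithfully by left multiplication on $C(\la)$), proving good reduction reduces to showing $\rho(I_{\p}) = \{1\}$ for an inertia subgroup $I_{\p}$ at $\p$.

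The first two constraints confine $\rho(I_{\p})$ to a torsion-free subgroup of $\Gm \subset \GSpin_{\la}$. Indeed, the hypothesis that $H^{2}_{\et}(X_{\overline{F}},\Q_{2})$ is unramified at $\p$, together with $1/2 \in R$ (which also makes $\Z_{2}(1)$ unramified), implies unramifiedness of $P^{2}_{\et}((X_{\overline{F}},L_{\overline{F}}),\Z_{2}(1))$. Proposition~\ref{uks}(1) then exhibits $\la_{\Z_{2},(X,L,\alpha)}$ as an extension of a trivial inertia module by this primitive cohomology, so $\la_{\Z_{2},(X,L,\alpha)}$ itself is unramified at $\p$. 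Therefore $\rho(I_{\p}) \subset \Ker(f) = \Gm \subset \GSpin_{\la}$, identified with $\Z_{2}^{\times}$. Moreover, as $\rho$ factors through $(\K_{n}^{\spp})_{2}$ and $(\K_{n}^{\spp})_{2} \cap \Gm = 1 + n\Z_{2}$ by the defining congruence condition of $\K_{n}^{\spp}$, we further have $\rho(I_{\p}) \subset 1 + n\Z_{2}$; since $n$ is a sufficiently large power of $2$ (in particular $n \geq 4$), this subgroup is torsion-free.

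The third constraint comes from the similitude character. The composition $\tilde{\nu} \coloneqq \nu \circ h \colon \GSpin_{\la} \to \Gm$ restricts on $\Gm \subset \GSpin_{\la}$ to the squaring map, because a scalar $\alpha$ acts on $V = C(\la)$ by multiplication by $\alpha$ and so scales $\phi_{a}$ by $\alpha^{2}$. On the other hand, $\tilde{\nu} \circ \rho$ is the similitude character of the Galois action on $H^{1}_{\et}(A^{(X,L,\alpha)}_{\overline{F}},\Z_{2})$, which via the polarization-induced Weil pairing equals (up to sign) the $2$-adic cyclotomic character. Since $p \neq 2$, this character is unramified at $\p$, so $\rho(I_{\p})^{2} = \{1\}$ and hence $\rho(I_{\p}) \subset \{\pm 1\}$. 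Intersecting with the torsion-free group $1 + n\Z_{2}$ forces $\rho(I_{\p}) = \{1\}$, completing the argument. The most delicate point will be the identification of $\tilde{\nu} \circ \rho$ with (a power of) the cyclotomic character, which rests on the compatibility between the symplectic form $\phi_{a}$ and the Weil pairing that is built into the embedding of Shimura data $(\GSpin_{\la},X_{\GSpin_{\la}}) \hookrightarrow (\GSp_{V,a},X_{\GSp_{V,a}})$.
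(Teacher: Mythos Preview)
Your argument is correct and takes a genuinely different route from the paper's proof. Both proofs begin with the N\'eron--Ogg--Shafarevich criterion and use Proposition~\ref{uks}(1) plus the hypothesis to see that inertia acts trivially on $\la_{\Z_2,(X,L,\alpha)}$. From there the two diverge. The paper works directly on $H^1_{\et}$: each $\varphi(\gamma)$ lies in $\End_{C(\la_{\Z_2})^{\opp}}(H^1)\simeq C(\la_{\Z_2,(X,L,\alpha)})$ and, because inertia is trivial on $\la$, commutes with all left multiplications, hence lands in the \emph{center} of $C(\la_{\Q_2})$, which is reduced. The paper then invokes Raynaud's criterion \cite[IX, Proposition 4.7]{SGA7-I}: the $F$-rational level-$n$ structure (Proposition~\ref{uks}(2)), with $n\geq 3$ a power of $2$ prime to the residue characteristic, forces semi-abelian reduction, so $\varphi(\gamma)$ is unipotent; a unipotent element of a reduced commutative algebra is the identity.

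Your approach instead lifts the inertia action to $(\K_n^{\spp})_2\subset\GSpin(\la_{\Z_2})$ via the pro-\'etale tower over $\Sh_{\K_n^{\spp}}(\GSpin_\la)$, lands it in the central $\Gm$, then in the torsion-free group $1+n\Z_2$, and kills it with a cyclotomic-character constraint. This completely avoids the appeal to semi-abelian reduction, trading SGA7 for the group structure of $\GSpin$. One remark on your ``most delicate point'': the exact identification $\tilde\nu\circ\rho=\chi_2^{\pm 1}$ via the Weil pairing rests on the compatibility of $\phi_a$ with the polarization under the Siegel moduli interpretation, which is standard but not spelled out in this paper. You can sidestep this entirely and sharpen the argument: once $\rho(\gamma)=\alpha\in 1+n\Z_2$ acts on $H^1$ as the scalar $\alpha$, it acts on $\wedge^{2g}H^1=H^{2g}_{\et}(A_{\overline{F}},\Z_2)\simeq\Z_2(-g)$ as $\alpha^{2g}$; but inertia is trivial on $\Z_2(-g)$ since $p\neq 2$, so $\alpha^{2g}=1$ and torsion-freeness gives $\alpha=1$. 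This is equivalent to your similitude computation (since $\det=\nu^{\,g}$ on $\GSp_{2g}$) but requires no comparison of pairings.
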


\begin{proof}
We will follow the proof by Andr\'{e} (\cite[Lemma 9.3.1]{Andre1996}).
\footnote{The referee taught me another quick way of seeing this proposition. By the construction, the Galois representation on $H^{1}_{\et}(A^{(X,L,\alpha)}_{\overline{F}},\Q_{2})$ has to factor through $\GSpin(PH^{2}_{\et}(X_{\overline{F}},\Q_{2}))$ and hence the inertia representation has to factor through the center $\Q_{2}^{\times}$ and in fact through $\Z_{2}^{\times}$ by the compactness. Moreover, by the level assumption, it factors through $1+4\Z_{2}$, so it finishes proof since $1+4\Z_{2}$ has no non-trivial quasi-unipotent elements.}
By the N$\e$ron--Ogg--Shafarevich criterion for abelian varieties (it is true whether a residue field is perfect or not), it is enough to show that $H^{1}_{\et}(A^{(X,L,\alpha)}_{\overline{F}},\Z_{2})$ is unramified at $\p$ (here we use $1/2\in R$).
Let $\overline{v}$ be an extension on $\overline{F}$ of the valuation $\p$, and $\varphi \colon I_{\overline{v}}\rightarrow \Aut(H^{1}_{\et}(A^{(X,L,\alpha)}_{\overline{F}},\Z_{2}))$ be a restriction of the Galois representation. Since $C(\la)$-action on $A^{(X,L,\alpha)}$ is defined over $F$, \textup{ for any } $\gamma \in I_{\overline{v}},$ we have
\[
\varphi (\gamma)\in \End_{C(\la_{\Z_{2}})^{\textup{op}}}(H^{1}_{\et}(A^{(X,L,\alpha)}_{\overline{F}},\Z_{2}))\simeq C(\la_{\Z_{2},(X,L,\alpha)})
\]
(see Proposition \ref{uks} (3), (4)). Thus we also denote its image by $\varphi(\gamma)\in C(\la_{\Z_{2},(X,L,\alpha)})$.

On the other hand, $I_{\overline{v}}$ acts trivially on $P^{2}_{\et}((X_{\overline{F}},L_{\overline{F}}),\Z_{2}(1))$ by our assumptions (see Lemma \ref{prep}), and moreover acts trivially on $P^{2}_{\et}((X_{\overline{F}},L_{\overline{F}}),\Z_{2}(1))^{\perp}\subset \la_{\Z_{2},(X,L,\alpha)}$ by Proposition \ref{uks} (1), thus 
$\gamma(c)=c$ for any $\gamma \in I_{\overline{v}}$ and $c \in C(\la_{\Z_{2},(X,L,\alpha)})$.
By Proposition \ref{uks} (4), we have $\gamma(z\mapsto cz)=(z\mapsto cz)$ in $\End_{C(\la_{\Z_{2}})^{\textup{op}}}(H^{1}_{\et}(A^{(X,L,\alpha)}_{\overline{F}},\Z_{2}))$, where the left-hand side is $(z\mapsto \varphi(\gamma)c\varphi(\gamma)^{-1}z)$. This implies $\varphi(\gamma)$ is contained in the center of $C(\la_{\Q_{2},(X,L,\alpha)})$, which is a reduced algebra.

The Raynaud semi-abelian reduction criterion \cite[Expos\'e IX, Proposition 4.7]{SGA7-I} and Proposition \ref{uks} (2) imply that $A^{(X,L,\alpha)}$ has semi-abelian reduction at $\p$ (i.e.\ $A^{(X,L,\alpha)}$ extends to a semi-abelian scheme over $\Spec R_{\p}$). Here, we use that $n\geq 3$ is a power of $2$, and the residual characteristic of $\p$ is not $2$. Thus for any $\gamma \in I_{\overline{v}}$, $\varphi(\gamma)$ is a unipotent element of a reduced algebra, it is identity. Hence it finishes the proof.
\end{proof}

We now complete the proof of Theorem \ref{genp}. By Proposition \ref{redp}, it is enough to show the finiteness of $\Shaf'(F,R,d)$ when $F\supset E_{n}$ and $1/2\in R$. Here, we take $E_{n}$ as in $3.3$. In the following, we identify $(X,L)\in \Shaf'(F,R,d)$ with $(X, L, \nu, \alpha)\in M_{2d,D_{d}(n),F}^{\circ}(F)$ by choosing a level structure. 
Hence for $(X,L,\nu,\alpha) \in \Shaf'(F,R,d),$ we can associate $A^{(X,L,\alpha)}$, and since in the diagram $(\ast)$ of 3.3, each fiber of $i_{d}$ is finite and $h$ is injective (because they are induced by an embedding of Shimura data), it suffices to show the finiteness of $\Delta_{d} (\Shaf'(F, R, d)).$ 
The image $\Delta_{d}(X,L,\nu,\alpha)$ corresponds to $A^{(X,L,\alpha)}$ with their degree $r$ polarization and level $n$-structure. 
However, by Proposition \ref{good}, the abelian variety $A^{(X,L,\alpha)}$ has good reduction at any height $1$ prime of $\Spec R$, so this set is finite by \cite[Satz 6]{Faltings1983} (for finitely generated fields of characteristic $0$, see \cite[VI, \S 1, Theorem 2]{Faltings1992}).

\subsection{Proof of Theorem \ref{genunp}}
In this subsection, we use the same notation as in Theorem \ref{genunp}, unless otherwise noted. The strategy is the same as \cite{She2017}, i.e.\ we use Theorem \ref{genp} for reducing the problem to the finiteness of Picard lattices, and use the uniform Kuga--Satake maps for associating $\Shaf(F,R)$ with a finite set of abelian varieties.

\begin{lemma}[{cf.\ \cite[Corollary 4.1.3]{She2017}}]\label{Pic}
For any $X_{0}\in \Shaf(F, R)$, 
there exist only finitely many $X\in \Shaf(F,R)$ whose Picard lattice $\Pic_{X/F}(F)$ is isometric to the Picard lattice $\Pic_{X_{0}/F}(F)$.
\end{lemma}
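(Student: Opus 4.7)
The plan is to reduce the lemma to the polarized Shafarevich finiteness already established as Theorem \ref{genp}. Once and for all I fix a primitive ample line bundle $L_0\in\Pic_{X_0/F}(F)$ of some degree $2d_0$, and write $\Pi:=\Pic_{X_0/F}(F)$. For each K3 surface $X$ in the set of the lemma, I choose some abstract lattice isometry $\phi\colon \Pi\xrightarrow{\sim}\Pic_{X/F}(F)$; then $\phi(L_0)$ is a primitive element of $\Pic_{X/F}(F)$ of self-intersection $2d_0$.

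The key step is to modify $\phi$ by post-composition with a suitable $g\in\algO(\Pic_{X/F}(F))$ so that the new image is an \emph{ample} class $L\in\Pic_{X/F}(F)$ of the same degree $2d_0$. This relies on the standard chamber description of the ample cone of a K3 surface: the ample cone is a fundamental chamber, inside the positive cone of $\Pic_{X/F}(F)\otimes\R$, for the action of the Weyl group generated by reflections in $(-2)$-classes of $\Pic_{X/F}(F)$, together with the involution $-\Id$. Since $\phi(L_0)$ has positive self-intersection $2d_0>0$, after possibly applying $\pm\Id$ it lies in the positive cone, and a further product of $(-2)$-reflections then moves it into the ample chamber. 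One may additionally need to pre-compose with an element of $\Aut_F(X)$ to pick up chambers related by $F$-rational automorphisms, but in any case the construction produces a primitive ample $L\in\Pic_{X/F}(F)$ of degree $2d_0$.

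By construction, $(X,L)$ lies in $\Shaf(F,R,d_0)$: the unramifiedness hypothesis on $H^2_{\et}(X_{\overline F},\Q_2)$ is inherited directly from $X\in\Shaf(F,R)$. The assignment $[X]\mapsto[(X,L)]$ is therefore a well-defined map from the set of the lemma into $\Shaf(F,R,d_0)$, and it is injective since $[X]$ is recovered from $[(X,L)]$ by simply forgetting the polarization. Theorem \ref{genp} gives finiteness of the target $\Shaf(F,R,d_0)$, hence of the source.

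The main obstacle is the ampleness step above: ensuring that the abstract isometry $\phi$ can indeed be modified so that $L_0$ lands inside the $F$-rational ample cone of $X$. Over $\overline F$ this is classical (the Weyl group acts simply transitively on chambers of the positive cone, with the K\"ahler cone as one chamber); the version required here is the Galois-equivariant one, which follows by working throughout with the Galois-invariant N\'eron--Severi lattice $\Pic_{X/F}(F)$ and the subgroup of the full Weyl group that preserves it. No arithmetic input beyond Theorem \ref{genp} is needed.
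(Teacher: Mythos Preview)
Your overall strategy—reducing to the polarized case via Theorem~\ref{genp}—matches the paper's. However, your key claim that every $X$ with $\Pic_{X/F}(F)\simeq\Pi$ carries a primitive polarization of the \emph{same} degree $2d_0$ as $L_0$ is false in general, and the Weyl-group argument you sketch does not establish it.

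The gap is in the assertion that the $F$-rational ample cone is a fundamental chamber for the group generated by reflections in $(-2)$-classes of $\Pi=\Pic_{X/F}(F)$ together with $\pm\Id$. This is correct over an algebraically closed field, but over $F$ the ample cone is cut out by the \emph{geometric} $(-2)$-curves on $X_{\overline F}$. A Galois-orbit sum of such curves lies in $\Pi$ but typically has self-intersection $<-2$, so it contributes a wall of the $F$-ample cone that is invisible to $W_\Pi$. Your Galois-equivariant remark in the last paragraph does not close this: when $\phi(L_0)$ is orthogonal to a geometric $(-2)$-curve not arising from a $(-2)$-class of $\Pi$, the Galois-invariant element of the geometric Weyl group only carries it to the nef boundary, not into the ample interior. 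For a concrete obstruction, take $\Pi=\Z e\oplus\Z f$ with $e^2=4$, $f^2=-4$, $e\cdot f=0$. This lattice contains no $(-2)$-classes, so $W_\Pi$ is trivial, and $\pm e$ are the only primitive classes of square $4$. If $X_{\overline F}$ has N\'eron--Severi lattice $\langle e\rangle\oplus\langle C_1\rangle\oplus\langle C_2\rangle$ with $C_i^2=-2$, $e\cdot C_i=C_1\cdot C_2=0$, and Galois swaps $C_1\leftrightarrow C_2$, then $\Pic_{X/F}(F)=\langle e,\,C_1+C_2\rangle\simeq\Pi$; yet $e\cdot C_i=0$ shows $e$ is only nef on $X$, so $X$ has no primitive polarization of degree $4$ whatsoever. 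Adjoining $\Aut_F(X)$ does not help, since here $\algO(\Pi)\simeq(\Z/2)^2$ already exhausts the available isometries and none of them moves $e$ off the wall.

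The paper sidesteps this by invoking \cite[Proposition~4.1.2]{She2017}: the isometry class of $\Pi$ determines only a \emph{bound} on the minimal degree of a primitive polarization, and one then applies Theorem~\ref{genp} for each of the finitely many degrees below that bound. Your argument is easily repaired along these lines, but not with a single fixed degree $2d_0$.
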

\begin{proof}
As in \cite[Proposition 4.1.2]{She2017}, a K3 surface $X$ over $F$ admits a primitive polarization whose degree bounded by a constant depending only on the isometry class of $\Pic_{X/F}(F)$. Hence this lemma follows from Theorem \ref{genp}.
\end{proof}

\begin{lemma}[{cf.\ \cite[Lemma 4.1.4]{She2017}}]\label{twistunp}
Let $E /F$ be a finite extension. For any $X_{0}\in \Shaf(F, R)$, the set
\[
\{X\in \Shaf(F,R) \mid X_{E}\simeq_{E} X_{0,E}
\}
\]
is finite.
\end{lemma}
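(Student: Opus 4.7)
The plan is to reduce to Lemma \ref{Pic} by showing that the Picard lattice $\Pic_{X/F}(F)$ of surfaces $X$ in the set in question takes only finitely many isometry classes. Replacing $E$ by its Galois closure in $\overline{F}$ only enlarges the set, so we may assume $E/F$ is Galois. Set $G:=\Gal(E/F)$, $n:=|G|$, and $L:=\Pic(X_{0,\overline{F}})^{\Gal(\overline{F}/E)}$, a fixed lattice of bounded rank.

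For each $X$ in the set, pick an $E$-isomorphism $\psi\colon X_{E}\xrightarrow{\sim}X_{0,E}$; the induced pullback $\psi_{*}$ is a $\Gal(\overline{F}/E)$-equivariant isometry of lattices $\Pic(X_{\overline{F}})\xrightarrow{\sim}\Pic(X_{0,\overline{F}})$. Transporting the $\Gal(\overline{F}/F)$-action through $\psi_{*}$ yields a new action on $\Pic(X_{0,\overline{F}})$ that agrees with the natural one on $\Gal(\overline{F}/E)$ and is given on $G$ by $\tilde{\sigma}:=c_{\sigma}\circ\sigma^{X_{0}}$, where $c\colon G\to\Aut_{E}(X_{0,E})$ is the standard twisting $1$-cocycle expressing $X$ as a twist of $X_{0}$. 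Both $\Aut_{E}(X_{0,E})$ and $\Gal(E/F)$ act on $L$ by isometries and $\sigma\mapsto\tilde{\sigma}$ is a group homomorphism (a direct check using the cocycle relation), so $\tilde{G}\subset\algO(L)$ is a finite subgroup of order dividing $n$. Using the identification $\Pic_{X/F}(F)=\Pic(X_{\overline{F}})^{\Gal(\overline{F}/F)}$ for K3 surfaces (recalled in Section 2.1), $\psi_{*}$ identifies $\Pic_{X/F}(F)$ isometrically with $L^{\tilde{G}}$.

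The key estimate is the averaging identity
\[
n v=\sum_{\sigma\in G}\tilde{\sigma}(v)+\sum_{\sigma\in G}\bigl(v-\tilde{\sigma}(v)\bigr),\qquad v\in L,
\]
whose two summands lie in $L^{\tilde{G}}$ and $(L^{\tilde{G}})^{\perp}$ respectively (the latter because each $\tilde{\sigma}$ is an isometry that fixes $L^{\tilde{G}}$ pointwise). Hence $nL\subset L^{\tilde{G}}\oplus (L^{\tilde{G}})^{\perp}$, so $[L:L^{\tilde{G}}\oplus(L^{\tilde{G}})^{\perp}]\leq n^{\mathrm{rk}\,L}$, and therefore
\[
|\disc(L^{\tilde{G}})|\cdot|\disc((L^{\tilde{G}})^{\perp})|=[L:L^{\tilde{G}}\oplus(L^{\tilde{G}})^{\perp}]^{2}\cdot|\disc L|\leq n^{2\,\mathrm{rk}\,L}|\disc L|.
\]
In particular $|\disc(L^{\tilde{G}})|$ is bounded uniformly in $X$ by a constant depending only on $X_{0}$ and $[E:F]$, so by Hermite's finiteness theorem for integral lattices of bounded rank and bounded discriminant, $L^{\tilde{G}}\cong\Pic_{X/F}(F)$ takes only finitely many isometry classes as $X$ varies.

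Finally, for each such isometry class that is realised by some $X_{1}\in\Shaf(F,R)$, Lemma \ref{Pic} applied to $X_{1}$ gives only finitely many $X\in\Shaf(F,R)$ whose Picard lattice is isometric to that class; summing over the finitely many classes yields the claimed finiteness. The main technical hurdle will be the cocycle bookkeeping needed to transport the $\Gal(\overline{F}/F)$-action through $\psi_{*}$ and to verify the identification $\psi_{*}(\Pic_{X/F}(F))=L^{\tilde{G}}$; once this is in place, both the discriminant bound and the final appeal to Lemma \ref{Pic} are essentially formal.
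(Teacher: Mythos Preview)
Your proof is correct and follows the same overall strategy as the paper: reduce to Lemma~\ref{Pic} by showing that the isometry classes of $\Pic_{X/F}(F)$ are finite, after identifying this lattice with the fixed sublattice $L^{\tilde G}$ for a finite subgroup $\tilde G\subset\algO(L)$ of order dividing $[E:F]$. The difference lies in how the finiteness of such fixed sublattices is established. The paper invokes the result of Borel \cite[Section 5, (a)]{Borel1963} that $\algO(L)$ has only finitely many conjugacy classes of finite subgroups of a given order, whence the $L^{\tilde G}$ fall into finitely many isometry classes. You instead give a direct, elementary argument: the averaging trick bounds $[L:L^{\tilde G}\oplus(L^{\tilde G})^{\perp}]$ by $n^{\mathrm{rk}\,L}$, hence bounds $|\disc(L^{\tilde G})|$, and Hermite's theorem finishes. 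Your route avoids the appeal to arithmetic-group theory and makes the bound explicit, at the cost of a short extra computation (and the tacit use of non-degeneracy of the form on $L^{\tilde G}$, which holds since over $\Q$ the averaging projector splits $L_{\Q}$ orthogonally); the paper's route is terser but imports a heavier reference.
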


\begin{proof}
Taking a Galois closure, we may assume $E/F$ is a Galois extension.
By Lemma \ref{Pic}, 
it suffices to show the finiteness of isometry classes of Picard lattices $\Pic_{X/F}(F)$ associated with the considering set. 
Note that $\Pic_{X/F}(E)^{\Gal(E/F)}= \Pic_{X/F}(F)$ and $\Pic_{X/F}(E)$ is isometric to $\Pic_{X_{0}/F}(E)$.
Since the set of conjugacy classes of subgroups of $\algO(\Pic_{X_{0}/F}(E))$ with the order $[E\colon F]$ is finite by \cite[Section 5, (a)]{Borel1963}, the desired finiteness follows.
\end{proof}

\begin{prop}\label{redunp}
Recall that we fixed a positive integer $n$ which is a power of $2$.
To show Theorem \ref{genunp}, it is enough to show that
\[
\Shaf'(F,R)
\coloneqq
\left\{X
\left|
\begin{array}{l}
X\colon \textup{K3 surface over } F, \\
H^{2}_{\et}(X_{\overline{F}},\Q_{2})\colon \textup{unramified over }\Spec R,\\
\textup{there exists } d_{X},  L_{X}, \nu_{X}, \alpha_{X} \textup{ such that}  \\
(X, L_{X}, \nu_{X}, \alpha_{X}) \in M_{2d_{X},D_{d_{X}}(n),\Q}^{\circ}(F)
\end{array}
\right.
\right\}
/F\textup{-isom}
\]
is a finite set for any $(F, R)$ as in Theorem \ref{genunp}. Moreover, if we fix a number field $F'$, it suffices to show only in the case where $F\supset F'$ and $1/2 \in R$.
\end{prop}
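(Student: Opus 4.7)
The plan is to mirror the argument of Proposition \ref{redp}, but now I must produce a \emph{single} finite extension $E_{0}/F$ that simultaneously provides a suitable level structure on every $X\in \Shaf(F,R)$, even though the degrees $2d_{X}$ of the auxiliary primitive ample bundles $L_{X}$ chosen on each $X$ are a priori unbounded. The key new input is to transfer the $2$-adic Galois representation on the primitive cohomology into the fixed ambient lattice $\la$: the constant $[\algO(\la_{\Z_{2}}):(D(n))_{2}]$ is independent of $d_{X}$, so the Hermite--Minkowski step applies uniformly in $X$, whereas the analogous constant $[\algO(\la_{d,\Z_{2}}):(D_{d}(n))_{2}]$ used in the polarized case depends on $d$.

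By Remark \ref{avoidl} and Lemma \ref{twistunp} I may assume $1/2\in R$ and $F\supset F'$, and may freely replace $F$ by any finite extension. For each $X\in \Shaf(F,R)$ I choose a primitive ample $L_{X}\in \Pic_{X/F}(F)$ of some degree $2d_{X}$ together with an isometry $i_{(X,L_{X})}$ as in Lemma \ref{andre}. Composing the restriction of $i_{(X,L_{X})}$ to the primitive part with the fixed embedding $i_{d_{X}}\colon \la_{d_{X},\Z_{2}}\hookrightarrow \la_{\Z_{2}}$, and extending by the identity on the orthogonal complement, I obtain a single Galois representation
\[
\tilde{\rho}_{X,2}\colon \pi_{1}(\Spec R,\overline{s})\longrightarrow \algO(\la_{\Z_{2}}),
\]
well-defined since $H^{2}_{\et}(X_{\overline{F}},\Q_{2})$ is unramified over $\Spec R$ (and $1/2\in R$ kills the effect of the Tate twist). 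Put $C\coloneqq [\algO(\la_{\Z_{2}}):(D(n))_{2}]$, which is finite and independent of $X$. The preimage $\tilde{\rho}_{X,2}^{-1}((D(n))_{2})$ is then an open subgroup of $\pi_{1}(\Spec R,\overline{s})$ of index $\leq C$, so by the analogue of the Hermite--Minkowski theorem \cite[Proposition 2.3, Theorem 2.9]{Harada2009} the intersection $H_{0}$ of all open subgroups of $\pi_{1}(\Spec R,\overline{s})$ of index $\leq C$ is itself open. Let $\Spec \tilde{R}_{0}\to \Spec R$ be the corresponding pointed finite $\e$tale cover and $E_{0}\coloneqq \Frac(\tilde{R}_{0})$; then $\tilde{\rho}_{X,2}(\Gal(\overline{F}/E_{0}))\subset (D(n))_{2}$ for every $X$ simultaneously.

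To conclude, I must upgrade this uniform containment to $\rho_{X,2}(\Gal(\overline{F}/E_{0}))\subset (D_{d_{X}}(n))_{2}$ for each individual $X$. Any element of $(D(n))_{2}$ has the form $f(z)$ with $z\in \GSpin(\la_{\Z_{2}})$ satisfying $z\equiv 1$ in $C^{+}(\la_{\Z_{2}/n\Z_{2}})$, and by construction $f(z)$ fixes $i_{d_{X}}(\la_{d_{X},\Z_{2}})^{\perp}$ pointwise, so $z$ commutes with every vector there. Using the $\Z/2$-graded isomorphism $C(\la_{\Q_{2}})\simeq C(\la_{d_{X},\Q_{2}})\mathbin{\widehat{\otimes}} C(\la_{d_{X},\Q_{2}}^{\perp})$, a direct Clifford-algebra computation generalizing Lemma \ref{cliffordlemma} to the primitive embedding $\la_{d_{X}}\hookrightarrow \la$ identifies the centralizer of $\la_{d_{X},\Q_{2}}^{\perp}$ inside $C^{+}(\la_{\Q_{2}})$ with $C^{+}(\la_{d_{X},\Q_{2}})$; combined with primitivity of $\la_{d_{X},\Z_{2}}\subset \la_{\Z_{2}}$, this forces $z\in (\K_{d_{X},n}^{\spp})_{2}$, whence $f(z)|_{\la_{d_{X}}}\in (D_{d_{X}}(n))_{2}$. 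Applying Lemma \ref{andre} (which, since $n$ is a power of $2$, reduces to checking only at $\ell=2$) produces a $D_{d_{X}}(n)$-level structure on $(X_{E_{0}},L_{X,E_{0}})$, so $X_{E_{0}}\in \Shaf'(E_{0},\tilde{R}_{0})$. By the assumed finiteness of $\Shaf'(E_{0},\tilde{R}_{0})$ and Lemma \ref{twistunp}, $\Shaf(F,R)$ is finite.

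The main obstacle I expect is the Clifford-centralizer identity: one must verify that both the center of $C(\la_{d_{X}}^{\perp})$ intersected with $C^{+}$ and the anti-commutant of $\la_{d_{X}}^{\perp}$ in $C^{-}$ contribute only scalars (respectively zero) on the even-Clifford factor, which ultimately relies on $\la_{d_{X}}^{\perp}$ being a non-degenerate sublattice of rank $4$. Once this computation is in hand, the remainder of the argument is a fairly direct adaptation of the template established in Proposition \ref{redp}.
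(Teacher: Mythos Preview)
Your approach works and reaches the goal, but by a genuinely different route than the paper. The paper never passes to the ambient lattice $\la$ at this stage: it bounds $[\Gamma:\Gamma\cap(D_{d_X}(n))_2]$ directly inside $\algO(\la_{d_X,\Z_2})$ via the factorization $[\Gamma:\Gamma\cap\SO]\cdot[\Gamma\cap\SO:\Gamma\cap(D_{d_X}(n))_2]\le 2\cdot N\, n^{(2^{20})}$, the second factor coming straight from Corollary~\ref{D_{d}(n)} together with the inclusion $\Gamma\cap\SO\subset\Gamma\cap(D_{d_X})_2$ supplied by the Chern-class argument of Lemma~\ref{andre}. You instead push everything into $\algO(\la_{\Z_2})$, use the single constant $[\algO(\la_{\Z_2}):(D(n))_2]$, and then pull back via a Clifford-centralizer computation. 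The paper's version is shorter because Corollary~\ref{D_{d}(n)} is already on the shelf; yours bypasses that corollary entirely at the cost of the centralizer step, which is correct (and in fact holds for any non-degenerate orthogonal complement, not just rank~$4$: diagonalize over $\Q_2$ and check on monomials that an even element commuting with each basis vector of the complement lies in $C^{+}(\la_{d_X,\Q_2})$).

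One point you glossed over: the phrase ``extending by the identity on the orthogonal complement'' only lands in $\algO(\la_{\Z_2})$, rather than merely $\algO(\la_{\Q_2})$, provided each $\rho_{X,2}(\gamma)$ already acts trivially on $\la_{d_X,\Z_2}^{\vee}/\la_{d_X,\Z_2}$, since $\la_{d_X,\Z_2}\oplus\la_{d_X,\Z_2}^{\perp}$ sits with finite index (not index~$1$) in $\la_{\Z_2}$. This does hold, by the argument of Proposition~\ref{disc} applied to the full Galois action on $H^{2}_{\et}(X_{\overline{F}},\Z_2(1))\simeq\la_{K3,\Z_2}$ fixing $\ch_{\Z_2}(L_X)$, but you should insert that sentence.
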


\begin{proof}
The proof is similar to Proposition \ref{redp}, but we need more precise evaluation since we should discuss all degrees simultaneously.

As in the proof of Proposition \ref{redp}, it is enough to show the finiteness of $\Shaf(F,R)$ with $1/2 \in R$. 
First, remark that every K3 surface over $F$ admits some primitive polarization over $F$.
Therefore, for any $X \in \Shaf(F,R)$, we can associate a primitive polarization $L_{X}$. Let $2d_{X}$ be the degree of $L_{X}$.
We will show that there exists a finite extension $E/F$ such that for any $X \in \Shaf(F,R)$, the pair $(X_{E},L_{X,E})$ admits a $D_{d_{X}}(n)$-level structure. For each $X \in \Shaf(F,R),$ we fix $i_{(X,L_{X})}$ as in Lemma \ref{andre}, and we use the notation $\overline{\rho}_{2}\coloneqq \overline{\rho}_{(X,L_{X}),2}$ in the same sense as in Proposition \ref{redp}. To get a desired extension, we should replace the bound $C_{d_{X}}$ in the proof of Proposition \ref{redp} by a bound which is independent of $X$.
For $\Gamma\coloneqq \overline{\rho}_{2}(\pi_{1}(\Spec R, \overline{s}))$, we have
\begin{align*}
[\Gamma \colon \Gamma \cap (D_{d_{X}}(n))_{2}]
&=[\Gamma \colon \Gamma \cap \SO(\la_{d,\Z_{2}})]\cdot [\Gamma \cap \SO(\la_{d_{X},\Z_{2}}) \colon \Gamma \cap(D_{d_{X}}(n))_{2}]\\
&\leq 2N \cdot n^{(2^{20})}.
\end{align*}

Here, we use $\Gamma\cap \SO(\la_{d,\Z_{2}}) \subset \Gamma\cap (D_{d})_{2}$ (follows from Proposition \ref{disc}, see the proof of Lemma \ref{andre}), and Corollary \ref{D_{d}(n)}. We note that this bound is independent of $X, L_{X},$ and $i_{(X,L_{X})}$. Hence replacing $C_{d}$ by $2N \cdot n^{(2^{20})}$ in the arguments in the proof of Proposition \ref{redp}, we get a pointed finite $\e$tale covering $\Spec \tilde{R}_{0}\rightarrow \Spec R$, whose fraction field $E$ satisfies the desired property.
Thus, by using the assumption for $\Shaf'(E,\tilde{R}_{0})$ and Lemma \ref{twistunp}, we get the finiteness of $\Shaf(F,R).$ The latter statement is clear by Lemma \ref{twistunp}.
\end{proof}

\begin{definition}[{\cite[Definition 4.1.10]{She2017}}]
Let $F$ be a subfield of $\C$, $X$ be a K3 surface over $F$, and $\ell$ be any prime number.
We define (relative) \emph{transcendental lattices} by 
\[
\begin{aligned}
T(X)&\coloneqq \Pic_{X/F}(F)^{\perp}\subset H^{2}(X(\C),\Z(1)),\\
T(X)_{\Z_{\ell}}&\coloneqq \Pic_{X/F}(F)^{\perp}\subset H_{\et}^{2}(X_{\overline{F}},\Z_{\ell}(1)),\\
T(X)_{\widehat{\Z}}&\coloneqq \Pic_{X/F}(F)^{\perp} \subset H_{\et}^{2}(X_{\overline{F}}, \widehat{\Z}(1)).
\end{aligned}
\]
Here we omit the Chern class map. 
\end{definition}

\begin{remark}[{cf.\ \cite[Corollary 4.1.13]{She2017}}]\label{samedisc}
Recall that $M\coloneqq H^{2}(X(\C),\Z(1))\simeq \la_{K3}$ is unimodular, and $N\coloneqq \Pic_{X/F}(F)$ is a primitive sublattice. In this situation, one can verify a canonical isomorphisms 
\[
N^{\vee}/N\simeq M/(N+N^{\perp}) \simeq (N^{\perp})^{\vee}/N^{\perp}.
\]
Thus we get $\disc(\Pic_{X/F}(F)) = \disc(T(X))$.
\end{remark}

\begin{lemma}[{cf.\ \cite[Proposition 4.1.11]{She2017}}]\label{tate}
For $(X,L,\nu,\alpha)\in M_{2d,D_{d}(n),F}^{\circ}(F)$ and any prime number $\ell$, we have
\[
(\la_{\Z_{\ell},(X,L,\alpha)}^{\Gal(\overline{F}/F)})^{\perp}= T(X)_{\Z_{\ell}}.
\]
Here, the orthogonal complement of the left-hand side is taken in $\la_{\Z_{\ell},(X,L,\alpha)}$, and the above equality is as a sublattice of $P^{2}_{\et}((X_{\overline{F}},L_{\overline{F}}), \Z_{\ell})$. 
\end{lemma}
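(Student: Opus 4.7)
Write $\Lambda := \la_{\Z_\ell, (X,L,\alpha)}$ and $P := P^2_{\et}((X_{\overline F}, L_{\overline F}), \Z_\ell(1))$. Proposition \ref{uks} (1) gives a Galois-equivariant embedding $P \hookrightarrow \Lambda$ whose orthogonal complement $Q := P^{\perp} \subset \Lambda$ carries the trivial Galois action. First I would record that this embedding is primitive: it arises by pulling back the sheaf inclusion of Lemma \ref{morshf} (1), whose stalks realize the primitive lattice embedding $\la_d \hookrightarrow \la$, and primitivity is preserved under such a pullback. Moreover the form on $\Lambda$ is non-degenerate over $\Z_\ell$ because $\la$ is unimodular. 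Consequently $(P^{\perp})^{\perp} = P$, and since $P^{\perp} \subset \Lambda^{\Gal}$ also $(\Lambda^{\Gal})^{\perp} \subset P$.

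Both $T(X)_{\Z_\ell}$ and $(\Lambda^{\Gal})^{\perp}$ are then saturated $\Z_\ell$-sublattices of $P$: orthogonal complements in non-degenerate lattices are automatically saturated, and the containment $T(X)_{\Z_\ell} \subset P$ follows from $L \in \Pic_{X/F}(F)$ together with $P = L^{\perp}$. Hence it suffices to verify the equality after $\otimes_{\Z_\ell} \Q_\ell$. Over $\Q_\ell$ the decomposition $\Lambda_{\Q_\ell} = P_{\Q_\ell} \oplus Q_{\Q_\ell}$ is orthogonal and Galois-equivariant, so $\Lambda^{\Gal}_{\Q_\ell} = (P_{\Q_\ell})^{\Gal} \oplus Q_{\Q_\ell}$, and therefore $(\Lambda^{\Gal})^{\perp} \otimes \Q_\ell = ((P_{\Q_\ell})^{\Gal})^{\perp}$, where this last orthogonal is taken in $P_{\Q_\ell}$. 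Because $L \in \Pic_{X/F}(F)$ and $P = L^{\perp}$, the orthogonal of $\Pic_{X/F}(F) \otimes \Q_\ell$ inside $H^2_{\et}(X_{\overline F}, \Q_\ell(1))$ equals the orthogonal of its projection onto $P_{\Q_\ell}$; thus the problem reduces to the Tate-type identity
\[
(P_{\Q_\ell})^{\Gal} = \bigl(\Pic_{X/F}(F) \otimes \Q_\ell\bigr) \cap P_{\Q_\ell}.
\]

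Establishing this identity is the main obstacle. It is a form of the Tate conjecture for K3 surfaces over finitely generated fields of characteristic $0$, which I would derive from the Kuga--Satake apparatus of Section 3 combined with Faltings' Tate conjecture for abelian varieties over such fields (\cite{Faltings1983}, \cite{Faltings1992}). Concretely, Proposition \ref{uks} (3),(4) identifies $C(\la_{\Z_\ell,(X,L,\alpha)})^{\Gal}$ with the $C(\la)^{\mathrm{op}}$-equivariant Galois-invariant endomorphisms of $H^1_{\et}(A^{(X,L,\alpha)}_{\overline F}, \Z_\ell)$, which by Faltings coincide with $\End_{\overline F, C(\la)^{\mathrm{op}}}(A^{(X,L,\alpha)}) \otimes \Z_\ell$; extracting the $\la$-part of this identification under the Clifford grading and translating back to the K3 side via the Kuga--Satake correspondence yields the desired Tate identity. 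The subtlety that $\Pic_{X/F}(F)$ may differ from $\Pic(X_{\overline F})^{\Gal(\overline F/F)}$ is harmless after $\otimes \Q_\ell$ since the cokernel is finite. The whole argument follows the proof of \cite[Proposition 4.1.11]{She2017} closely.
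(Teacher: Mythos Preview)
Your proposal is correct and follows the same two-step argument as the paper: reduce to a $\Q_\ell$-statement using that both sides are saturated (primitive) sublattices, then invoke the Tate conjecture for K3 surfaces over finitely generated fields of characteristic~$0$. The only difference is that the paper simply cites this last input as a known result \cite[Theorem~5.6~(a)]{Tate1994}, whereas you propose to rederive it from Faltings' theorem via the Kuga--Satake apparatus of Section~3; your sketch of that rederivation is vague at the step of passing from Galois-invariant endomorphisms of $A^{(X,L,\alpha)}$ back to divisor classes on $X$ (this ultimately requires a Hodge-theoretic input such as Lefschetz $(1,1)$), but since the result is standard you may simply cite it as the paper does.
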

\begin{proof}
First, we can show that  
\[
T(X)_{\Z_{\ell}}=(P^{2}_{\et}((X_{\overline{F}},L_{\overline{F}}),\Z_{\ell}(1))^{\Gal(\overline{F}/F)})^{\perp}
\]
(the orthogonal complement of the right-hand side is taken in $P^{2}_{\et}((X_{\overline{F}},L_{\overline{F}}),\Z_{\ell}(1))$). 
Indeed, since the both sides of this equality are primitive in $P^{2}_{\et}((X_{\overline{F}},L_{\overline{F}}),\Z_{\ell}(1))$, it suffices to show this equality after inverting $\ell$, which follows directly from the Tate conjecture over $F$ (\cite[Theorem 5.6 (a)]{Tate1994}).

Hence we have to show that 
\[
(P^{2}_{\et}((X_{\overline{F}},L_{\overline{F}}),\Z_{\ell}(1))^{\Gal(\overline{F}/F)})^{\perp} = (\la_{\Z_{\ell},(X,L,\alpha)}^{\Gal(\overline{F}/F)})^{\perp}
\]
(remark that the $\perp$ in both sides have different meaning). However, since the both sides are primitive in $\la_{\Z_{\ell},(X,L,\alpha)}$, we may invert $\ell$ for showing this equality, so it follows obviously from Proposition \ref{uks} (1). 
\end{proof}

Let us complete the proof of Theorem \ref{genunp}.
As in the previous subsection, by Proposition \ref{genunp}, it suffices to show the finiteness of $\Shaf'(F, R)$ when $F\supset E_{n}$ and $1/2\in R$.
By Lemma \ref{Pic} and the fact \cite[ch. 9, Theorem 1.1]{Cassels1982} which asserts the finiteness of isometry classes of lattices with bounded rank and discriminant, it is enough to show that $\disc(\Pic_{X/F}(F))$ $(X\in \Shaf'(F, R))$ is bounded.
Using Remark \ref{samedisc}, we can reduce the problem to the finiteness of $\{T(X)_{\widehat{\Z}}\mid X\in \Shaf'(F,R) \}/$isometry.

For $X\in \Shaf'(F, R)$, we choose an element 
\[
(X,L_{X},\nu_{X}, \alpha_{X}) \in M_{2d_{X},D_{d_{X}}(n),F}^{\circ}(F).
\]
Then, by Proposition \ref{good} and \cite[Theorem 1]{Zarhin1985} (for finitely generated fields of characteristic $0$, see \cite[VI, \S 1, Theorem 2]{Faltings1992}), the subset
\[
\{\Delta_{d_{X}}(X, L_{X},\nu_{X},\alpha_{X}) \mid X \in \Shaf'(F, R) \} \subset \Sh_{\K_{n}}(\GSp_{V,a})(F)
\]
is finite. We denote them by $t_{1}, \ldots t_{m}$, and we put
\[
\Shaf'(F,R)_{i}\coloneqq \{X\in \Shaf'(F,R)\mid \Delta_{d_{X}}(X,L_{X},\nu_{X},\alpha_{X})=t_{i}\}.
\]
Thus, the desired finiteness follows from the following lemma.

\begin{lemma}
The $\widehat{\Z}$-lattices
$T(X)_{\widehat{\Z}}$ $(X\in \Shaf'(F, R)_{i})$ are isometric to each other.
\end{lemma}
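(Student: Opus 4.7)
My plan is to exploit the uniform Kuga--Satake diagram $(\ast)$ to show that the ambient Galois lattice $\la_{\Z_\ell,(X,L_X,\alpha_X)}$ depends only on the common image point $t_i$, not on the individual element $X \in \Shaf'(F,R)_i$. Once this is in place, Lemma \ref{tate} immediately produces abstract isometries $T(X)_{\Z_\ell} \simeq T(X')_{\Z_\ell}$ for every prime $\ell$, which then assemble into an isometry over $\widehat{\Z}$.

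First, I would verify that the composition $h\circ\delta\colon \Sh_{D(n)}(\SO_{\la}) \to \Sh_{\K_n}(\GSp_{V,a})$ is injective on $F$-points. The map $\delta$ is a section of $f$, hence injective; the map $h$ is induced by the embedding of Shimura data $(\GSpin_{\la}, X_{\GSpin_{\la}}) \hookrightarrow (\GSp_{V,a}, X_{\GSp_{V,a}})$, so it is a closed embedding of the canonical models. Consequently, given $X, X' \in \Shaf'(F,R)_i$ with $t_X, t_{X'}$ their corresponding points in the moduli spaces, the equalities
\[
(h\circ \delta\circ i_{d_X}\circ j)(t_X) \;=\; t_i \;=\; (h\circ\delta\circ i_{d_{X'}}\circ j)(t_{X'})
\]
together with injectivity of $h\circ\delta$ force $(i_{d_X}\circ j)(t_X) = (i_{d_{X'}}\circ j)(t_{X'})$ as $F$-points of $\Sh_{D(n)}(\SO_{\la})$.

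Next, at this common $F$-point of $\Sh_{D(n)}(\SO_{\la})$, the pullback of $\la_{\Z_\ell}^{\shf}$ is, by definition, a single $\Gal(\overline{F}/F)$-equivariant $\Z_\ell$-lattice with symmetric bilinear form that is canonically identified with both $\la_{\Z_\ell,(X,L_X,\alpha_X)}$ and $\la_{\Z_\ell,(X',L_{X'},\alpha_{X'})}$. Any Galois-equivariant isometry of lattices identifies the respective Galois-fixed sublattices and, therefore, their orthogonal complements. Applying Lemma \ref{tate} to both $X$ and $X'$, I conclude $T(X)_{\Z_\ell} \simeq T(X')_{\Z_\ell}$ as $\Z_\ell$-lattices for every prime $\ell$.

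Finally, the canonical decomposition $\widehat{\Z} = \prod_\ell \Z_\ell$ yields a product decomposition $T(X)_{\widehat{\Z}} = \prod_\ell T(X)_{\Z_\ell}$ (and similarly for $X'$) that is respected by the bilinear form, so the product of the $\ell$-adic isometries is a $\widehat{\Z}$-isometry $T(X)_{\widehat{\Z}} \simeq T(X')_{\widehat{\Z}}$. The only genuinely substantive point is the injectivity of $h\circ\delta$; once that is secured, the remainder is formal and follows from Lemma \ref{tate} and the compatibility of the pullbacks of $\la_{\Z_\ell}^{\shf}$ with the $F$-points already constructed.
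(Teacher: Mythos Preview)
Your argument is correct and follows essentially the same route as the paper: both use the injectivity of $h\circ\delta$ to produce a single common $F$-point $\tilde t_i$ on $\Sh_{D(n)}(\SO_{\la})$, identify each $\la_{\Z_\ell,(X,L_X,\alpha_X)}$ with the pullback $\tilde t_i^{\ast}(\la_{\Z_\ell}^{\shf})$, and then invoke Lemma \ref{tate}. Your explicit remark about assembling the $\ell$-adic isometries into a $\widehat{\Z}$-isometry is a detail the paper leaves implicit, but otherwise the two proofs coincide.
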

\begin{proof}
By Lemma \ref{tate}, it suffices to show that $\la_{\Z_{\ell},(X,L_{X},\alpha_{X})}$ $(X\in \Shaf'(F, R)_{i})$ is unique up to a $\Gal(\overline{F}/F)$-equivariant isometry, for any $\ell$. We denote the lift of $t_{i}$ on $\Sh_{D(n)}(\SO_{\la})$ via $h \circ \delta$ (it exists by the definition of $t_{i}$, and it is unique because $h \circ \delta$ is injective) by $\tilde{t}_{i}.$ 
Recall that we have the $\et$ale sheaf $\la_{\Z_{\ell}}^{\shf}$, which has a symmetric pairing structure, so we get the $\Gal(\overline{F}/F)$-lattice $\tilde{t}_{i}^{\ast}(\la_{\Z_{\ell}}^{\shf})$, which depends only on $t_{i}$.
By our construction of $\la_{\Z_{\ell},(X,L,\alpha)}$ in Proposition \ref{uks}, for any $X \in \Shaf'(F, R)_{i},$ the $\Gal(\overline{F}/F)$-lattice $\la_{\Z_{\ell},(X,L_{X},\alpha_{X})}$ is none other than $\tilde{t}_{i}^{\ast}(\la_{\Z_{\ell}}^{\shf})$ and it finishes the proof.
\end{proof}

\section{\texorpdfstring{$\ell$}{l}-independence}
In this section, we prove $\ell$-independence of the unramifiedness for completing the proof of the main theorem. 
\begin{lemma}\label{lindep}
Let $K$ be a Henselian discrete valuation field, $k$ be the residue field of $K$, $p$ be the characteristic of $k$, and $X$ be a smooth proper surface $X$ over $K$. Then, the following are equivalent.
\begin{itemize}
\item[$(a)$]
The $\Gal(\overline{K}/K)$-representation on $H^{2}_{\et}(X_{\overline{K}},\Q_{\ell})$ is unramified for some $\ell\neq p$.
\item[$(b)$]
The $\Gal(\overline{K}/K)$-representation on $H^{2}_{\et}(X_{\overline{K}},\Q_{\ell})$ is unramified for all $\ell\neq p$.
\end{itemize}
Moreover, if $K$ is a complete discrete valuation field of mixed characteristic $(0,p)$ with the perfect residue field $k$ and $X$ is a K3 surface over $K$, then $(a)$ $(\Leftrightarrow (b))$ is equivalent to the following.
\begin{itemize}
\item[$(c)$]
The $\Gal(\overline{K}/K)$-representation on $H^{2}_{\et}(X_{\overline{K}},\Q_{p})$ is crystalline.
\end{itemize}
\end{lemma}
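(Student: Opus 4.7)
My plan is to handle the two equivalences separately: $(a)\Leftrightarrow(b)$ for general smooth proper surfaces via weight filtrations, and $(a)\Leftrightarrow(c)$ for K3 surfaces via the Kuga--Satake abelian variety together with Ochiai's $\ell$-independence theorem.

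For $(a)\Leftrightarrow(b)$, the direction $(b)\Rightarrow(a)$ is immediate, and for the converse I would invoke Matsumoto's Theorem~3.3 in \cite{Matsumoto2016}: for every $\ell\neq p$, it describes the weight filtration on $H^{2}_{\et}(X_{\overline{K}},\Q_{\ell})$ and the action of inertia on its graded pieces in terms of the geometry of a suitable model of $X$. The weight--monodromy conjecture being known for surfaces, this description is manifestly $\ell$-independent. Unramifiedness is equivalent to the representation being pure of weight~$2$ with trivial action of the tame inertia, and both conditions then transfer from one $\ell\neq p$ to all $\ell\neq p$.

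For $(a)\Leftrightarrow(c)$ in the K3 case with $K$ complete of mixed characteristic $(0,p)$ and perfect residue field, both conditions are stable under finite unramified extensions of $K$, so after such an extension I may equip $X$ with a primitive polarization $L$ of some degree $2d$ and a $D_{d}(n)$-level structure $\alpha$. Proposition~\ref{uks} then produces a Kuga--Satake abelian variety $A/K$ together with a Galois-equivariant lattice embedding
\[
P^{2}_{\et}((X_{\overline{K}},L_{\overline{K}}),\Z_{\ell}(1)) \hookrightarrow \la_{\Z_{\ell},(X,L,\alpha)}
\]
with trivial Galois action on the orthogonal complement, and an identification $H^{1}_{\et}(A_{\overline{K}},\Z_{\ell}) \simeq C(\la_{\Z_{\ell},(X,L,\alpha)})$ of $\Gal(\overline{K}/K)$-modules. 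For $(a)\Rightarrow(c)$, the argument of Proposition~\ref{good}, run with an arbitrary $\ell\neq p$ in place of $\ell=2$, shows that $A$ has good reduction; hence $H^{1}_{\et}(A_{\overline{K}},\Q_{p})$ is crystalline by Fontaine's theorem, and the Clifford identification propagates crystallineness to $\la_{\Q_{p},(X,L,\alpha)}$ and then to its direct summand $H^{2}_{\et}(X_{\overline{K}},\Q_{p}(1))$. For $(c)\Rightarrow(a)$, the unramified complement lets crystallineness of $H^{2}_{\et}(X_{\overline{K}},\Q_{p})$ propagate upward to $H^{1}_{\et}(A_{\overline{K}},\Q_{p})=C(\la_{\Q_{p},(X,L,\alpha)})$, whence $A$ has good reduction (by Coleman--Iovita), and Ochiai's $\ell$-independence theorem \cite{Ochiai1999} delivers unramifiedness of $H^{1}_{\et}(A_{\overline{K}},\Q_{\ell})$, and thereby of $H^{2}_{\et}(X_{\overline{K}},\Q_{\ell})$, for every $\ell\neq p$.

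The main obstacle I expect is the careful analysis of Matsumoto's weight-filtration description needed for the first equivalence: the $\ell$-independence must control not only the unipotent part of the inertia action (captured by the monodromy operator $N_{\ell}$) but also the residual tame-inertia action on the graded pieces, since triviality of inertia is strictly stronger than vanishing of $N_{\ell}$. In the Kuga--Satake part, the delicate point is to verify that the Clifford identification of Proposition~\ref{uks}(3)--(4), originally a statement about $\ell$-adic Galois modules, respects the $p$-adic comparison isomorphisms so that crystallineness can be transferred between the K3 side and the abelian side in both directions.
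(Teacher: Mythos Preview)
Your overall decomposition matches the paper's, and your treatment of $(a)\Leftrightarrow(c)$ via the Kuga--Satake abelian variety is in line with what the paper itself defers to \cite{Matsumoto2015a} and \cite{Imai2016}. However, there is a genuine gap in your argument for $(a)\Leftrightarrow(b)$ that the paper fills with an ingredient you have not invoked there. Matsumoto's result \cite[Theorem~3.3(2)]{Matsumoto2016}, as the paper uses it, gives only the $\ell$-independence of the \emph{dimensions} of the graded pieces of the weight filtration; combined with the weight--monodromy conjecture for surfaces this shows that the vanishing of $N_\ell$ is $\ell$-independent, but---exactly as you flag in your final paragraph---this leaves the finite-order part of the inertia action unhandled, and Matsumoto's theorem does not supply it. The paper closes this with Ochiai's trace $\ell$-independence \cite[Corollary~2.5]{Ochiai1999} (extended to imperfect residue fields by Vidal \cite[Proposition~4.2]{Vidal2004}): one has $\tr(\rho_\ell(\sigma))=\tr(\rho_{\ell'}(\sigma))$ for every $\sigma\in I_K$, and the elementary Lemma~\ref{tr and monodromy} then says that $\rho_\ell$ is unramified if and only if $N_\ell=0$ and $\tr(\rho_\ell(\sigma))=\dim H^2$ for all $\sigma\in I_K$. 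Both conditions are $\ell$-independent, the first by Matsumoto plus weight--monodromy, the second by Ochiai--Vidal. So Ochiai must already enter in the $\ell$-versus-$\ell'$ direction, not only in $(c)\Rightarrow(a)$.

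A smaller caution on your $(c)\Rightarrow(a)$ sketch: acquiring a $D_d(n)$-level structure is governed (via Lemma~\ref{andre}, with $n$ a power of $2$) by the $2$-adic image of Galois, and under hypothesis $(c)$ alone you do not yet know that $H^2_{\et}(X_{\overline K},\Q_2)$ is unramified. The extension needed to produce the level structure may therefore be ramified, so your argument yields only $(a)$ over that extension, which does not descend to $K$. The references the paper cites address this point; your sketch as written does not.
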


\begin{remark}\phantomsection\label{remlindep}
\begin{enumerate}  
\item
In fact, for K3 surfaces, Lemma \ref{lindep} is already mentioned by Madapusi Pera in \cite[Remark 4.3]{Matsumoto2015a} (using the Kuga--Satake construction, we can reduce the problem to the case of abelian varieties). 
We note that such arguments also appeared in \cite{Imai2020}. 
So we will prove only the $\ell$ versus $\ell'$ part for general smooth proper surfaces as a corollary of Matsumoto's $\ell$-independence result (\cite[Theorem 3.3 (2)]{Matsumoto2016}).
\item
If we assume that $X$ admits a Kulikov model after a finite extension of $K$, then Lemma \ref{lindep} is known as a corollary of a good reduction criterion for K3 surfaces
(see \cite[Theorem 1.1]{Chiarellotto2019} for example). 
\item 
For any smooth proper surface $X$ over $K$, one can easily prove the similar assertion for $H^{i}_{\et}$ $(i\neq2)$. Indeed, the case of $i=0, 4$ is trivial. Moreover, for $i=1,3$, by using the Picard variety, we can reduce the problem to the case of abelian varieties. Therefore, it follows from the N{\e}ron--Ogg--Shafarevich criterion for abelian varieties (and its crystalline analogue \cite[Theorem 1]{Coleman1999}).
\end{enumerate}
\end{remark}

\subsection{Proof of Lemma \ref{lindep}}
In this subsection, we prove the Lemma \ref{lindep}.
As in Remark \ref{remlindep}, it is enough to show the equivalence $(a) \Leftrightarrow (b)$ in Lemma \ref{lindep}.
Let $K$ be a Henselian discrete valuation field, $k$ be the residue field of $K$, $p$ be the characteristic of $k$, and $X$ be a smooth proper surface over $K$.

First, we recall the definition of the monodromy operator.
\begin{definition}\label{defmonodromyl}
Let $\ell$ be a prime number different from $p$.
Consider the representation
\[
\rho_{\ell}\colon \Gal (\overline{K}/K)\rightarrow \GL(H^{2}_{\et}(X_{\overline{K}},\Q_{\ell})).
\]
By Grothendieck's monodromy theorem, there exists an open subgroup of the inertia subgroup $J\subset I_{K}$ and the nilpotent operator 
\[
N_{\ell}\colon H^{2}_{\et}(X_{\overline{K}},\Q_{\ell})(1)\rightarrow H^{2}_{\et}(X_{\overline{K}},\Q_{\ell})
\]
such that for all $\sigma \in J$,
we have
$\rho_{\ell}(\sigma)=\exp(t_{\ell}(\sigma)N_{\ell})$, where $t_{\ell}\colon I_{K}\rightarrow \Z_{\ell}(1)$ is a natural projection. By fixing an isomorphism $\Q_{\ell}(1)\simeq \Q_{\ell}$, we regard $N_{\ell}$ as a linear endomorphism of $H^{2}_{\et}(X_{\overline{K}},\Q_{\ell})$, which is called the
\textit{monodromy operator}.
\end{definition}
\begin{remark}\label{remmonodromy}
By the definition, $N_{\ell}$ does not change if we replace $K$ by a finite extension of it.
\end{remark}

The following lemma is an elementary fact about $\ell$-adic representations.
\begin{lemma}\label{tr and monodromy}
The following are equivalent.
\begin{enumerate}
\item
The $\ell$-adic representation
$\rho_{\ell}$ is unramified.
\item
$N_{\ell}=0$ and $\tr(\rho_{\ell}(\sigma))= \dim (H^{2}_{\et}(X_{\overline{K}},\Q_{\ell}))$ for any $\sigma \in I_{K}$.
\end{enumerate}
\end{lemma}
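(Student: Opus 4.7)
The plan is to handle the two implications separately, with the bulk of the argument concentrated in $(2) \Rightarrow (1)$.

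For $(1) \Rightarrow (2)$, I would simply unwind the definitions. If $\rho_\ell|_{I_K}$ is trivial then $\tr(\rho_\ell(\sigma)) = \dim H^2_{\et}(X_{\overline K}, \Q_\ell)$ for every $\sigma \in I_K$ is immediate. For any $\sigma$ in the open subgroup $J \subset I_K$ from Definition \ref{defmonodromyl}, the identity $1 = \rho_\ell(\sigma) = \exp\bigl(t_\ell(\sigma)N_\ell\bigr)$ combined with the fact that $\exp$ is injective on nilpotent endomorphisms forces $t_\ell(\sigma) N_\ell = 0$. Since $J$ is an open subgroup of $I_K$ and $t_\ell\colon I_K \to \Z_\ell(1)$ has open image (in fact is surjective onto the pro-$\ell$ part), there exists $\sigma \in J$ with $t_\ell(\sigma) \neq 0$, and therefore $N_\ell = 0$.

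For $(2) \Rightarrow (1)$, I would first use $N_\ell = 0$ to reduce ramification to a finite quotient: the formula $\rho_\ell(\sigma) = \exp(t_\ell(\sigma) N_\ell)$ gives $\rho_\ell(\sigma) = 1$ for all $\sigma \in J$, so $\rho_\ell|_{I_K}$ factors through the finite quotient $I_K/J$ (finite because $J$ is open in the profinite group $I_K$). In particular, every element in the image $\rho_\ell(I_K)$ has finite order, so for each $\sigma \in I_K$ the operator $\rho_\ell(\sigma)$ is semisimple and all its eigenvalues are roots of unity. The trace hypothesis then says that these $d := \dim H^2_{\et}(X_{\overline K}, \Q_\ell)$ eigenvalues $\lambda_1, \ldots, \lambda_d$ on the unit circle satisfy $\sum \lambda_i = d$.

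The main (and essentially only) substantive step is the eigenvalue argument: by the triangle inequality, $d = \bigl|\sum \lambda_i\bigr| \leq \sum |\lambda_i| = d$, with equality forcing all the $\lambda_i$ to be positive real numbers, hence equal to $1$. Combined with the semisimplicity of $\rho_\ell(\sigma)$, this yields $\rho_\ell(\sigma) = \mathrm{id}$ for every $\sigma \in I_K$, i.e.\ $\rho_\ell$ is unramified. I do not anticipate any real obstacle here; the lemma is a formal fact about $\ell$-adic representations once Grothendieck's monodromy theorem has produced $N_\ell$ and $J$.
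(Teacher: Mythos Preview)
Your proof is correct and follows essentially the same route as the paper's: both use $N_\ell=0$ to make $\rho_\ell$ trivial on the open subgroup $J$, deduce that each $\rho_\ell(\sigma)$ has finite order, and then invoke the trace condition to force $\rho_\ell(\sigma)=\mathrm{id}$. The paper is simply terser, compressing your semisimplicity and triangle-inequality eigenvalue argument into the single phrase ``the trace condition implies that $\rho_\ell(\sigma)=1$,'' and declaring $(1)\Rightarrow(2)$ trivial where you spell it out.
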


\begin{proof}
$(1)\Rightarrow (2)$ is trivial. Therefore we prove the opposite direction.
By the definition of the monodromy operator, we have $\rho_{\ell}(g)=1$ for any $g \in J$, where $J$ is an open subgroup of $I_{K}$. Hence for any $\sigma \in I_{K}$, we get $\rho_{\ell}(\sigma)$ is of finite order, and the trace condition implies that $\rho_{\ell}(\sigma)=1$. 
\end{proof}

\begin{definition}
\begin{enumerate}
\item
There exists a unique increasing filtration $\M_{r}(H^{2}_{\et}(X_{\overline{K}},\Q_{\ell}))$ on $H^{2}_{\et}(X_{\overline{K}},\Q_{\ell})$ such that $\M_{r}=0$ for $r \ll 0$, $\M_{r}= H^{2}_{\et}(X_{\overline{K}},\Q_{\ell})$ for $r \gg 0$, $N(\M_{r})\subset \M_{r-2}$ and $N^{r}$ induces an isomorphism $\gr_{r}^{\M}\simeq \gr_{-r}^{\M}$ for any positive integer $r$. We call $\M_{r}$ the \emph{monodromy filtration} on $H^{2}_{\et}(X_{\overline{K}},\Q_{\ell})$.
\item
If $X$ admits a strictly semi-stable model over $\oo_{K}$, we get the \emph{weight filtration} $\W_{r}(H^{2}_{\et}(X_{\overline{K}},\Q_{\ell}))$ on $H^{2}_{\et}(X_{\overline{K}},\Q_{\ell})$ by the weight spectral sequence (see \cite[Corollary 2.8]{Saito2003}). For general $X$, one can also define the weight filtration $\W_{r}$ by using de Jong's alteration. 
\end{enumerate}
\end{definition}

\begin{lemma}\phantomsection\label{matsumoto lindep}
\begin{enumerate}
\item
For any integer $r$, we have
\[
\M_{r}(H^{2}_{\et}(X_{\overline{K}},\Q_{\ell}))= \W_{r+2} (H^{2}_{\et}(X_{\overline{K}},\Q_{\ell})).
\]
\item
For any integer $r$, the dimension of $\gr^{\W}_{r}(H^{2}_{\et}(X_{\overline{K}},\Q_{\ell}))$ is independent of $\ell$. 
\end{enumerate}
\end{lemma}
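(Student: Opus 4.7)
The plan is to reduce both statements to the strictly semi-stable case via de Jong's alterations, and then invoke the monodromy-weight conjecture for surfaces together with Matsumoto's $\ell$-independence theorem. By Remark \ref{remmonodromy}, the monodromy operator $N_\ell$, and hence the monodromy filtration $\M_\bullet$, does not change under a finite extension of $K$; the weight filtration $\W_\bullet$, being defined via alterations, behaves compatibly under base change as well. Thus, after replacing $K$ by a finite extension, I may assume that there exists an alteration $\pi\colon \tilde{X}\to X$ such that $\tilde{X}$ has a strictly semi-stable proper model over $\oo_K$.

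For part (1), on such a $\tilde{X}$ the weight filtration on $H^2_{\et}(\tilde{X}_{\overline{K}},\Q_\ell)$ arises from the Rapoport--Zink weight spectral sequence, and the identification of the monodromy filtration with the shifted weight filtration in the middle degree (degree $2$ for a surface) is the monodromy-weight conjecture, known in the strictly semi-stable setting by Rapoport--Zink; this is precisely the input of \cite[Theorem 3.3]{Matsumoto2016}. Since $\pi^{\ast}\colon H^2_{\et}(X_{\overline{K}},\Q_\ell)\hookrightarrow H^2_{\et}(\tilde{X}_{\overline{K}},\Q_\ell)$ realizes the former as a Galois-equivariant direct summand of the latter, and is compatible with both the monodromy operator $N_\ell$ and (by definition) the weight filtration, the equality $\M_r = \W_{r+2}$ descends from $\tilde{X}$ to $X$.

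For part (2), the $\ell$-independence of $\dim\gr^{\W}_r$ is essentially the content of \cite[Theorem 3.3]{Matsumoto2016}. Conceptually, on a strictly semi-stable model, the $E_1$-page of the Rapoport--Zink spectral sequence expresses $\gr^{\W}_r$ in terms of étale cohomology groups of intersections of irreducible components of the special fiber. Since these components are smooth and proper over the residue field, their Betti numbers are $\ell$-independent by smooth and proper base change, and the $E_2$-degeneration of the spectral sequence (already used in part (1)) promotes this to $\ell$-independence of the graded pieces. Transporting via the direct summand $\pi^{\ast}$ yields the statement for $X$.

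The main obstacle is bookkeeping: verifying that the monodromy operator, the monodromy filtration, and the weight filtration are all compatible with the direct summand inclusion $\pi^{\ast}$ induced by the alteration, and that all three filtrations are stable under the finite base change used to acquire a strictly semi-stable alteration. These checks are standard consequences of the functoriality of étale cohomology and of the Rapoport--Zink formalism, but they must be handled carefully because we work over a Henselian rather than complete base.
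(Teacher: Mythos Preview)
Your proposal is correct and takes essentially the same approach as the paper. The paper's own proof is extremely terse: it simply cites \cite[Satz~2.13]{Rapoport1982} and \cite[Lemma~3.9]{Saito2003} for part~(1) (the weight--monodromy conjecture for surfaces) and \cite[Theorem~3.3~(2)]{Matsumoto2016} for part~(2), leaving the alteration reduction implicit in the definition of $\W_\bullet$; you have unpacked this mechanism explicitly. One minor point of attribution: in your part~(1) the phrase ``this is precisely the input of \cite[Theorem~3.3]{Matsumoto2016}'' is misleading---the weight--monodromy statement for surfaces is Rapoport--Zink (and Saito), whereas Matsumoto's contribution is the $\ell$-independence used in part~(2).
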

\begin{proof}
The assertion $(1)$ is well-known as the weight monodromy conjecture for surfaces (see \cite[Satz 2.13]{Rapoport1982}, \cite[Lemma 3.9]{Saito2003}). The assertion $(2)$ follows from \cite[Theorem 3.3 (2)]{Matsumoto2016}.
\end{proof}

\subsubsection*{The proof of $(a)\Leftrightarrow (b)$ in Lemma \ref{lindep}}
Taking a completion, we may assume $K$ is complete.
We shall prove $(a) \Rightarrow (b)$.
Take prime numbers $\ell,\ell' \neq p$.
By \cite[Corollary 2.5]{Ochiai1999} (for imperfect residue fields, see \cite[Proposition 4.2]{Vidal2004}), we have $\tr(\rho_{\ell}(\sigma))=\tr(\rho_{\ell'}(\sigma))$ for any $\sigma\in I_{K}$. By the definition of the monodromy filtration, we have
\[
N_{\ell}= 0 \Leftrightarrow \dim (\gr^{M}_{0}(H^{2}_{\et}(X_{\overline{K}},\Q_{\ell}))) = \dim (H^{2}_{\et}(X_{\overline{K}},\Q_{\ell})).
\]

Therefore, by Lemma \ref{tr and monodromy} and Lemma \ref{matsumoto lindep}, we get the desired implication.

\section{Corollaries}
\subsection{Some remarks}
First, combining Theorem \ref{genunp} with Lemma \ref{lindep}, we obtain the main theorem in more generalized form.
\begin{theorem}\label{main}
Let $F$ be a finitely generated field over $\Q$, $R$ be a finite type algebra over $\Z$ which is a normal domain with the fraction field $F$, and $d$ be a positive integer. Then, the set
\begin{align*}
    S(F,R)\coloneqq
    \{ X 
    \mid 
    X \colon \textup{K3 surface over }F \textup{ satisfying the condition $(C)$}
    \}/F\textup{-isom}
\end{align*}
is finite.
Here, the condition \textup{$(C)$} is the following.

\begin{itemize}
\item[$(C)$]
For any height $1$ prime $\p \in \Spec R$, take a discrete valuation field $E_{\p}$ such that $E_{\p}$ is an algebraic extension of discrete valuation fields over $F=\Frac(R_{\p})$, the residue field of $E_{\p}$ is the perfection of the residue field of $R_{\p}$, and a uniformizer of $R_{\p}$ is also a uniformizer of $E_{\p}$. Then, there exists a prime number $\ell$ different from the residual characteristic of $\p$ such that 
$H^{2}_{\et}(X_{\overline{E_{\p}}},\Q_{\ell})$ is an unramified $\Gal(\overline{F}/E_{\p})$-representation. 
\end{itemize}
\end{theorem}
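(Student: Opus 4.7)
The plan is to combine Theorem~\ref{genunp} with the $\ell$-independence result Lemma~\ref{lindep}. Concretely, I would show that condition~$(C)$ forces every $X \in S(F,R)$ to lie in $\Shaf(F,R)$ after a mild shrinking of $R$, so that finiteness of $S(F,R)$ reduces to Theorem~\ref{genunp}. The input of Lemma~\ref{lindep} is needed because condition~$(C)$ allows the unramifying prime $\ell$ to vary with $\p$, whereas Theorem~\ref{genunp} requires a single uniform $\ell$ (namely $\ell=2$).

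First, I would replace $R$ by a non-empty open subscheme of $\Spec R$, using the analogue of Remark~\ref{avoidl} for $S$: a non-empty open subscheme of $\Spec R$ contains all but finitely many height-$1$ primes, and finiteness for a smaller ring implies finiteness for the original. Specifically, I would shrink to the intersection of the smooth locus of $\Spec R$ over $\Z$ with $\Spec R[1/2]$; both loci are open and dense since $R$ is a normal finite type $\Z$-algebra. Now fix $X \in S(F, R)$ and a height-$1$ prime $\p$ of the shrunk $R$. Condition~$(C)$ gives some $\ell \neq \mathrm{char}(\kappa(\p))$ for which $H^{2}_{\et}(X_{\overline{E_\p}}, \Q_\ell)$ is unramified as a $\Gal(\overline{F}/E_\p)$-representation. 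The field $E_\p$, or its completion (to which unramifiedness is insensitive), is a Henselian discrete valuation field of residual characteristic $\neq 2$, so Lemma~\ref{lindep} applied to the smooth proper surface $X_{E_\p}$ over $E_\p$ upgrades this to unramifiedness for $\ell=2$. The design of $E_\p$---perfect residue field equal to the perfection of $\kappa(\p)$, shared uniformizer with $R_\p$, algebraic over $F$---then lets me transfer this back to unramifiedness of $H^{2}_{\et}(X_{\overline{F}}, \Q_2)$ at $\p$ in the sense of Lemma~\ref{prep}. Varying $\p$ and applying Lemma~\ref{prep}, I obtain $X \in \Shaf(F, R)$, and Theorem~\ref{genunp} then yields the desired finiteness.

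The main subtlety is the transfer of unramifiedness from $E_\p$ back to $F$ at $\p$. A priori the absolute inertia of $E_\p$, viewed as a subgroup of $\Gal(\overline{F}/F)$, is strictly smaller than the inertia subgroup $I_{\overline{v}}$ attached to a place $\overline{v}$ of $\overline{F}$ above $\p$, because $E_\p/F_\p^{\mathrm{h}}$ realises the purely inseparable residue extension $\kappa(\p)^{\mathrm{perf}}/\kappa(\p)$. Making the descent step precise---leveraging the shared-uniformizer property of $E_\p$ together with the fact that Lemma~\ref{lindep} is applied over a Henselian DVF whose perfect residue field produces the strongest available form of $\ell$-independence---is where the careful choice of $E_\p$ in the statement of the theorem pays off, and it is the only step where I expect anything more than routine bookkeeping.
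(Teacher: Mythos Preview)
Your overall strategy coincides with the paper's: shrink $R$, upgrade condition $(C)$ to unramifiedness at $\ell=2$ via Lemma~\ref{lindep}, and then invoke Theorem~\ref{genunp}. You have also correctly isolated the one genuine issue, namely passing from triviality of the inertia $I'_{\overline{v}}\subset\Gal(\overline{F}/E_{\p})$ on $H^{2}_{\et}$ to triviality of the full inertia $I_{\overline{v}}\subset\Gal(\overline{F}/F)$ at $\p$.

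However, your proposed shrinking (to the smooth locus with $1/2\in R$) is not enough to close that gap, and the step is not just bookkeeping. The shared-uniformizer hypothesis on $E_{\p}$ ensures that $I'_{\overline{v}}$ already surjects onto the tame quotient of $I_{\overline{v}}$; hence, once you know $\rho(I'_{\overline{v}})=1$ for $\rho$ the action on $H^{2}_{\et}(X_{\overline{F}},\Z_{2})$, you get $\rho(I_{\overline{v}})=\rho(P_{\overline{v}})$, a quotient of the wild inertia and therefore a pro-$p$ group (with $p$ the residue characteristic of $\p$). But a pro-$p$ subgroup of $\GL_{22}(\Z_{2})$ need not be trivial for odd $p$: the kernel of reduction mod $2$ is pro-$2$, so such a group injects into $\GL_{22}(\F_{2})$, which has plenty of nontrivial $p$-subgroups for small odd $p$. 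The paper's fix is to shrink $R$ further so that $1/M\in R$ with $M=\lvert\GL_{22}(\F_{2})\rvert$; then every height-$1$ prime has residue characteristic $p\nmid M$, and the pro-$p$ image in $\GL_{22}(\F_{2})$ is forced to be trivial, whence $\rho(I_{\overline{v}})=1$. So to complete your argument you must (i) work with the integral lattice $H^{2}_{\et}(X_{\overline{F}},\Z_{2})$ rather than $\Q_{2}$-coefficients, and (ii) invert $M$, not just $2$, before running the descent.
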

\begin{remark}\phantomsection\label{remmain}
\begin{enumerate}
\item
The field extension $E_{\p}$ in the condition $(C)$ always exists by \cite[Theorem 29.1]{Matsumura1989}.
\item
By Lemma \ref{lindep}, the unramifiedness assumption in the condition $(C)$ is independent of $\ell$. If the residual characteristic of $\p$ is positive, replacing $E_{\p}$ by the completion of it, we can replace this condition in terms of crystalline representations. 
\end{enumerate}
\end{remark}

\begin{proof}
Shrinking $\Spec R$ if necessary, we may assume that $R$ is smooth over $\Z$ since the generic fiber $R\otimes_{\Z}\Q$ is generically smooth over $\Q$.
Let $M$ be the order of $\GL_{22}(\F_{2})$. 
Shrinking $\Spec R$ again, we may assume that $1/M \in R$. 
Consider a height $1$ prime $\p \in \Spec R$, and we denote its residual characteristic by $p\geq 0$. 
Take an extension of valuation $\p$ to $\overline{F}$, and we denote it by $\overline{v}$. 
We denote the inertia subgroups by 
$I_{\overline{v}} \subset \Gal(\overline{F}/F), I'_{\overline{v}} \subset \Gal(\overline{F}/E_{\p}).$ 
We denote the $\Gal(\overline{F}/F)$-representation $H^{2}_{\et}(X_{\overline{K}},\Z_{2})$ 
by $\rho$. 
Then, by Remark \ref{remmain} (2), we get $\rho(I'_{\overline{v}})=1$. 
If $p=0$, we have $\rho(I_{\overline{v}}) = \rho(I'_{\overline{v}})=1$. 
If $p>0,$ for any finite index open normal subgroup $H$ of $\rho(I_{\overline{v}})$, we get $[\rho(I_{\overline{v}})\colon H]$ is a $p$-group. 
(Here, we use that for any finite extension of discrete valuation fields of characteristic $0$, the extension degree is equal to the product of the ramification index and the inertia degree).
Therefore we get 
\[
\rho(I_{\overline{v}})\cap (1+2\cdot \Mat_{22}(\Z_{2}))=1
\]
since the former is pro-$p$ and the latter is pro-$2$. 
Moreover, the image of $\rho(I_{\overline{v}})$ in $\GL_{22}(\F_{2})$ via the reduction map is trivial because $p$ does not divide $M$. 
Therefore, we get $\rho(I_{\overline{v}})=1$ even if $p>0$. 
Thus we have $S(F,R) \subset \Shaf(F,R)$, so $S(F,R)$ is a finite set.
\end{proof}

Next, as an immediate consequence of Theorem \ref{main}, we obtain the unpolarized Shafarevich conjecture for K3 surfaces over finitely generated fields of characteristic $0$.

\begin{definition}
Let $R_{\p}$ be a discrete valuation ring with maximal ideal $\p$, and $F$ be the fraction field of $R_{\p}$.
For a K3 surface $X$ over $F$,
we say $X$ has good reduction at $\p$ if there exists a smooth proper algebraic space over $R_{\p}$ whose generic fiber is isomorphic to $X$. 
Note that such model would be automatically a K3 family over $\Spec R_{\p}$ (see Definition \ref{K3} (2)). 
\end{definition}

\begin{corollary}\label{shafunp}
Let $F$ be a finitely generated field over $\Q$, and $R$ be a finite type algebra over $\Z$ which is a normal domain with the fraction field $F$.
Then, the set
\[
\left\{X \left|
\begin{array}{l}
X\colon \textup{K3 surface over }F,\\
X \textup{ has good reduction at any height }1 \textup{ prime ideal }\p \in \Spec R
\end{array}
\right.\right\}/F\textup{-isom}
\]
is finite.
\end{corollary}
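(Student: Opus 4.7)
The plan is to deduce Corollary \ref{shafunp} directly from Theorem \ref{main} by showing that good reduction at every height $1$ prime of $\Spec R$ implies condition $(C)$ of that theorem. Once this implication is established, the set in the corollary injects into $S(F,R)$, whose finiteness is already known.

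To verify condition $(C)$, fix a height $1$ prime $\p \in \Spec R$, let $p$ denote its residue characteristic, and suppose $X$ admits a smooth proper algebraic space model $\mathcal{X} \to \Spec R_{\p}$. Choose $E_{\p}$ as in condition $(C)$. Since a uniformizer of $R_{\p}$ remains a uniformizer of $E_{\p}$ and the residue field extension is the perfection (hence purely inseparable), the natural map $R_{\p} \hookrightarrow \oo_{E_{\p}}$ is a faithfully flat local homomorphism of discrete valuation rings. Base change yields a smooth proper algebraic space $\mathcal{X}_{\oo_{E_{\p}}} \to \Spec \oo_{E_{\p}}$ whose generic fiber is $X_{E_{\p}}$, and whose special fiber $\mathcal{X}_{0}$ is a K3 surface over the perfect residue field of $E_{\p}$.

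Next I would pick any prime $\ell \neq p$ and invoke the smooth proper base change theorem for \'etale cohomology of algebraic spaces: pulling back along the strict henselization $\oo_{E_{\p}}^{\mathrm{sh}} \to \oo_{E_{\p}}$ and passing to the geometric generic and geometric special fibers yields a canonical specialization isomorphism
\[
H^{2}_{\et}(\mathcal{X}_{0,\overline{k}}, \Q_{\ell}) \xrightarrow{\ \sim\ } H^{2}_{\et}(X_{\overline{E_{\p}}}, \Q_{\ell})
\]
which is equivariant for the natural action of the decomposition group $\Gal(\overline{E_{\p}}/E_{\p}^{\mathrm{sh}}) = I'_{\overline{v}}$. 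Since the left-hand side is pulled back from a field and carries no inertia action, $I'_{\overline{v}}$ acts trivially on the right-hand side as well, which is precisely the content of condition $(C)$ for the prime $\p$.

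The only technical point is the appeal to smooth proper base change in the category of algebraic spaces rather than schemes, but this is standard (see for instance Artin's work or the Stacks Project), so I do not anticipate a genuine obstacle; the corollary is essentially an immediate consequence of Theorem \ref{main}.
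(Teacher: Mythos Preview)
Your proposal is correct and matches the paper's approach: the paper states Corollary \ref{shafunp} as an immediate consequence of Theorem \ref{main} without further proof, and your argument via smooth proper base change (for algebraic spaces) is exactly the standard way to see that good reduction forces condition $(C)$, hence the set in the corollary injects into $S(F,R)$.
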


\subsection{The finiteness of twists}
Here, we give the finiteness result of twists of K3 surfaces via a finite extension of characteristic $0$ fields.

\begin{corollary}\label{fintwists}
Let $F$ be a field of characteristic $0$, $E/F$ be a finite extension, and $X$ be a K3 surface over $F$. Then, the set
\[
\Tw_{E/F}(X)\coloneqq
\{Y\colon \textup{K3 surface over } F\mid Y_{E}\simeq_{E}X_{E}
\}/F\textup{-isom}
\]
is finite.
\end{corollary}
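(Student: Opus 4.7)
The plan is to deduce the corollary from Theorem~\ref{genunp} together with Lemma~\ref{twistunp}. First, replacing $E$ by its Galois closure over $F$ inside a fixed algebraic closure $\overline{F}$ only enlarges $\Tw_{E/F}(X)$, so I may assume that $E/F$ is a finite Galois extension.

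Next, I reduce to the case where $F$ is finitely generated over $\Q$. Since $X$ is a K3 surface, $H^{0}(X,T_{X/F})=0$, so the automorphism group scheme $\underline{\Aut}_{X/F}$ is \'etale and locally of finite type; in particular, $\Aut_{E}(X_{E})$ depends only on $E\cap \overline{F_{0}}$ for any subfield $F_{0}\subset F$ over which $X$ is defined. A descent argument then yields a finitely generated subfield $F_{0}\subset F$, a K3 surface $X_{0}/F_{0}$, and a finite Galois extension $E_{0}/F_{0}$ with $\Gal(E_{0}/F_{0})\simeq \Gal(E/F)$, satisfying $X=X_{0}\otimes_{F_{0}}F$, $E=E_{0}\otimes_{F_{0}}F$, and such that the base change map $\Tw_{E_{0}/F_{0}}(X_{0})\to \Tw_{E/F}(X)$ is surjective. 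I therefore assume $F$ is finitely generated over $\Q$ and $(X,E/F)=(X_{0},E_{0}/F_{0})$.

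Now I choose a smooth finite type $\Z$-algebra $R$ with $\Frac(R)=F$ such that \textup{(i)} $X\in \Shaf(F,R)$, which I arrange by shrinking $R$ so that the $\Gal(\overline{F}/F)$-representation on $H^{2}_{\et}(X_{\overline{F}},\Q_{2})$ factors through $\pi_{1}(\Spec R,\overline{s})$; and \textup{(ii)} the integral closure $S$ of $R$ in $E$ is finite \'etale over $\Spec R$, which I achieve by further shrinking $R$ to remove the ramification divisor of $\Spec S\to \Spec R$, a closed subset of pure codimension one on the regular scheme $\Spec R$ by Zariski--Nagata purity. For any $Y\in \Tw_{E/F}(X)$, the given $E$-isomorphism $Y_{E}\simeq X_{E}$ identifies $H^{2}_{\et}(Y_{\overline{F}},\Q_{2})$ with $H^{2}_{\et}(X_{\overline{F}},\Q_{2})$ as $\Gal(\overline{F}/E)$-representations, so the former is unramified at every height one prime of $\Spec S$. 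By \textup{(ii)}, $E/F$ is unramified above every height one prime $\p\in \Spec R$, so the inertia subgroup in $\Gal(\overline{F}/F)$ of any extension $\overline{v}$ of $\p$ is already contained in $\Gal(\overline{F}/E)$ and coincides with the inertia of $\overline{v}|_{E}$ there. Hence $H^{2}_{\et}(Y_{\overline{F}},\Q_{2})$ is unramified over $\Spec R$, i.e., $Y\in \Shaf(F,R)$. Applying Lemma~\ref{twistunp} to $X$ yields the finiteness of $\{Y\in \Shaf(F,R):Y_{E}\simeq_{E} X_{E}\}\supseteq \Tw_{E/F}(X)$, as desired.

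The main obstacle I expect is Step~2, the reduction to a finitely generated base field. The heart of that step is showing that every $1$-cocycle $\Gal(E/F)\to \Aut_{E}(X_{E})$ can be lifted (cohomologously) to a cocycle with values in $\Aut_{E_{0}}(X_{0,E_{0}})$ after enlarging $F_{0}$ by a finite extension inside $F\cap \overline{F_{0}}$; this in turn relies crucially on the \'etaleness of $\underline{\Aut}_{X/F}$. Once this descent is in place, the remainder is a routine combination of Lemma~\ref{twistunp} with the \'etale inertia comparison sketched above; note in particular that the cohomological generalization in Theorem~\ref{genunp} is essential, as one cannot a priori guarantee that the twists $Y$ admit smooth proper models over $\Spec R$.
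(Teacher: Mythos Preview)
Your overall architecture matches the paper's proof: pass to $E/F$ Galois, reduce to $F$ finitely generated over $\Q$, spread $X$ out over a smooth $\Z$-algebra $R$ with $E/F$ \'etale over $\Spec R$, and then observe that for every twist $Y$ the $\Gal(\overline{F}/E)$-representation $H^{2}_{\et}(Y_{\overline{F}},\Q_{2})\simeq H^{2}_{\et}(X_{\overline{F}},\Q_{2})$ is already unramified over $\Spec R$, so $Y\in\Shaf(F,R)$. The paper then invokes Theorem~\ref{genunp} directly rather than passing through Lemma~\ref{twistunp}, but that is a cosmetic difference; your inertia comparison is exactly the paper's kernel equality $\Ker(\pi_{1}(\Spec E)\to\pi_{1}(\Spec\widetilde{R}))=\Ker(\pi_{1}(\Spec F)\to\pi_{1}(\Spec R))$.

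The gap is in Step~2, and it is precisely the obstacle you flag. \'Etaleness of $\underline{\Aut}_{X_{0}/F_{0}}$ tells you only that each individual automorphism of $X_{E}$ (hence each individual cocycle, since $\Gal(E/F)$ is finite) descends to some finite extension of $F_{0}$ inside $\overline{F_{0}}$. It does \emph{not} give a single finitely generated $E_{0}$ over which all of $\Aut_{E}(X_{E})$ is defined, and that is exactly what you need for the surjectivity of $\Tw_{E_{0}/F_{0}}(X_{0})\to\Tw_{E/F}(X)$: different twists could a priori force unbounded extensions of $F_{0}$. The paper closes this gap by citing Sterk's theorem \cite[Proposition 2.2]{Sterk1985} that $\Aut(X_{\overline{F}})$ is a finitely generated group. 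One first enlarges $E$ so that $\Aut_{E}(X_{E})=\Aut_{\overline{F}}(X_{\overline{F}})$, and then takes $E'$ finitely generated over $\Q$ containing a field of definition of $X$ together with a finite generating set for this group; this forces $\Aut_{E'}(X_{0,E'})=\Aut_{E}(X_{E})$ as $\Gal(E/F)$-groups and yields the isomorphism of $H^{1}$'s. With Sterk inserted, your Step~2 goes through; without it, \'etaleness alone is insufficient.
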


\begin{proof}
Clearly, we may assume $E/F$ is a finite Galois extension.
First, we will reduce the problem to the case of finitely generated fields.
Since $\Aut(X_{\overline{F}})$ is a finitely generated group (\cite[Proposition 2.2]{Sterk1985}), extending $E$ if necessary, we may assume $\Aut(X_{E}) =\Aut(X_{\overline{F}}).$
We can take a finitely generated field $E'\subset E$ on which $X$ and any elements of $\Aut(X_{E})$ are defined. 
Moreover, by extending $E'$ if necessary, we may assume $E'$ is $\Gal(E/F)$-stable and $\Gal(E/F)\rightarrow \Aut(E')$ is injective. 
Let $F'$ be the fixed subfield $E'^{\Gal(E/F)}$. Then, the description of twists
\begin{align*}
\Tw_{E/F}(X) &\simeq H^{1}(\Gal(E/F), \Aut(X_{E}))\\
&\simeq H^{1}(\Gal(E'/F'), \Aut(X_{E'})).
\end{align*}
implies that the desired finiteness is reduced to the case of $E'/F'$.

Thus, in the following of this proof, we assume $F$ is a finitely generated field and $E/F$ is a finite Galois extension. 
One can take a smooth proper morphism of schemes $\mathcal{X}\rightarrow \Spec R$ whose generic fiber is $X$, where $R$ is a smooth algebra over $\Z$ which is an integral domain with the fraction field $F$ and $1/2\in R$. Then, via a monodromy action, we get $H^{2}_{\et}(X_{\overline{F}},\Z_{2})$ is unramified over $\Spec R$. 
Let $\widetilde{R}$ be the normalization of $R$ in $E$. 
Shrinking $\Spec R$ if necessary, we may assume $\Spec \widetilde{R} \rightarrow \Spec R$ is a finite $\et$ale covering. Since $E$ is unramified over $\Spec R,$ by \cite[Proposition 3.3.6]{Fu2015}, we have
\[
\Ker(\pi_{1}(\Spec E, \overline{s})\rightarrow \pi_{1}(\Spec\widetilde{R},\overline{s}))
=\ker(\pi_{1}(\Spec F, \overline{s})\rightarrow \pi_{1}(\Spec R, \overline{s})).
\]
For any $Y\in \Tw_{E/F}(X)$, the isomorphism $Y_{E}\simeq_{E} X_{E}$ implies that the $\Gal(\overline{F}/E)$-action on $H^{2}_{\et}(Y_{\overline{F}},\Z_{2})$ arises from a $\pi_{1}(\Spec\widetilde{R},\overline{s})$-action. 
Moreover, because of the above equality, the $\Gal(\overline{F}/F)$-action on $H^{2}_{\et}(Y_{\overline{F}},\Z_{2})$ also arises from a $\pi_{1}(\Spec R,\overline{s})$-action. Hence we get a natural inclusion $\Tw_{E/F}(X)\hookrightarrow \Shaf (F, R),$ and thus the desired finiteness follows from Theorem \ref{genunp}.
\end{proof}


\begin{thebibliography}{FWG{\etalchar{+}}92}

\bibitem[And96]{Andre1996}
Y.~Andr\'e, \emph{On the {S}hafarevich and {T}ate conjectures for
  hyper-{K}\"ahler varieties}, Math. Ann. \textbf{305} (1996), no.~2, 205--248.
  \MR{1391213}

\bibitem[Bor63]{Borel1963}
A.~Borel, \emph{Arithmetic properties of linear algebraic groups}, Proc.
  {I}nternat. {C}ongr. {M}athematicians ({S}tockholm, 1962), Inst.
  Mittag-Leffler, Djursholm, 1963, pp.~10--22. \MR{0175901}

\bibitem[Cas82]{Cassels1982}
J.~W.~S. Cassels, \emph{Rational quadratic forms}, Proceedings of the
  {I}nternational {M}athematical {C}onference, {S}ingapore 1981 ({S}ingapore,
  1981), North-Holland Math. Stud., vol.~74, North-Holland, Amsterdam-New York,
  1982, pp.~9--26. \MR{690091}

\bibitem[CI99]{Coleman1999}
R.~Coleman and A.~Iovita, \emph{The {F}robenius and monodromy operators for
  curves and abelian varieties}, Duke Math. J. \textbf{97} (1999), no.~1,
  171--215. \MR{1682268}

\bibitem[CLL19]{Chiarellotto2019}
B.~Chiarellotto, C.~Lazda, and C.~Liedtke, \emph{A
  {N}\'{e}ron-{O}gg-{S}hafarevich criterion for {K}3 surfaces}, Proc. Lond.
  Math. Soc. (3) \textbf{119} (2019), no.~2, 469--514. \MR{3959051}

\bibitem[Fal83]{Faltings1983}
G.~Faltings, \emph{Endlichkeitss\"atze f\"ur abelsche {V}ariet\"aten \"uber
  {Z}ahlk\"orpern}, Invent. Math. \textbf{73} (1983), no.~3, 349--366.
  \MR{718935}

\bibitem[Fu15]{Fu2015}
L.~Fu, \emph{Etale cohomology theory}, revised ed., Nankai Tracts in
  Mathematics, vol.~14, World Scientific Publishing Co. Pte. Ltd., Hackensack,
  NJ, 2015. \MR{3380806}

\bibitem[FWG{\etalchar{+}}92]{Faltings1992}
G.~Faltings, G.~W\"ustholz, F.~Grunewald, N.~Schappacher, and U.~Stuhler,
  \emph{Rational points}, third ed., Aspects of Mathematics, E6, Friedr. Vieweg
  \& Sohn, Braunschweig, 1992, Papers from the seminar held at the
  Max-Planck-Institut f\"ur Mathematik, Bonn/Wuppertal, 1983/1984, With an
  appendix by W\"ustholz. \MR{1175627}

\bibitem[GRR72]{SGA7-I}
A.~Grothendieck, M.~Raynaud, and D.~S. Rim, \emph{Groupes de monodromie en
  g\'eom\'etrie alg\'ebrique. {I}}, Lecture Notes in Mathematics, Vol. 288,
  Springer-Verlag, 1972, S{\'e}minaire de G{\'e}om{\'e}trie Alg{\'e}brique du
  Bois-Marie 1967--1969 (SGA 7 I).

\bibitem[HH09]{Harada2009}
S.~Harada and T.~Hiranouchi, \emph{Smallness of fundamental groups for
  arithmetic schemes}, J. Number Theory \textbf{129} (2009), no.~11,
  2702--2712. \MR{2549526}

\bibitem[Huy16]{Huybrechts2016}
D.~Huybrechts, \emph{Lectures on {K}3 surfaces}, Cambridge Studies in Advanced
  Mathematics, vol. 158, Cambridge University Press, Cambridge, 2016.
  \MR{3586372}

\bibitem[IIK18]{Ito2018}
K.~Ito, T.~Ito, and T.~Koshikawa, \emph{{CM liftings of K3 surfaces over finite
  fields and their applications to the Tate conjecture}}, preprint (2018),
  arXiv:1809.09604.

\bibitem[IM20]{Imai2020}
N.~Imai and Y.~Mieda, \emph{Potentially good reduction loci of {S}himura
  varieties}, Tunis. J. Math. \textbf{2} (2020), no.~2, 399--454. \MR{3990825}

\bibitem[LM18]{Liedtke2018}
C.~Liedtke and Y.~Matsumoto, \emph{Good reduction of {K}3 surfaces}, Compos.
  Math. \textbf{154} (2018), no.~1, 1--35.

\bibitem[Mat89]{Matsumura1989}
H.~Matsumura, \emph{Commutative ring theory}, second ed., Cambridge Studies in
  Advanced Mathematics, vol.~8, Cambridge University Press, Cambridge, 1989,
  Translated from the Japanese by M. Reid. \MR{1011461}

\bibitem[Mat15]{Matsumoto2015a}
Y.~Matsumoto, \emph{Good reduction criterion for {K}3 surfaces}, Math. Z.
  \textbf{279} (2015), no.~1-2, 241--266. \MR{3299851}

\bibitem[{Mat}16]{Matsumoto2016}
Y.~{Matsumoto}, \emph{{Degeneration of K3 surfaces with non-symplectic
  automorphisms}}, preprint (2016), arXiv:1612.07569.

\bibitem[Mil90]{Milne1990}
J.~S. Milne, \emph{Canonical models of (mixed) {S}himura varieties and
  automorphic vector bundles}, Automorphic forms, {S}himura varieties, and
  {$L$}-functions, {V}ol.\ {I} ({A}nn {A}rbor, {MI}, 1988), Perspect. Math.,
  vol.~10, Academic Press, Boston, MA, 1990, pp.~283--414. \MR{1044823}

\bibitem[MP15]{MadapusiPera2015}
K.~Madapusi~Pera, \emph{The {T}ate conjecture for {K}3 surfaces in odd
  characteristic}, Invent. Math. \textbf{201} (2015), no.~2, 625--668.
  \MR{3370622}

\bibitem[MP16]{MadapusiPera2016}
\bysame, \emph{Integral canonical models for spin {S}himura varieties}, Compos.
  Math. \textbf{152} (2016), no.~4, 769--824. \MR{3484114}

\bibitem[Nik79]{Nikulin1979}
V.~V. Nikulin, \emph{Integer symmetric bilinear forms and some of their
  geometric applications}, Izv. Akad. Nauk SSSR Ser. Mat. \textbf{43} (1979),
  no.~1, 111--177, 238. \MR{525944}

\bibitem[Och99]{Ochiai1999}
T.~Ochiai, \emph{{$l$}-independence of the trace of monodromy}, Math. Ann.
  \textbf{315} (1999), no.~2, 321--340. \MR{1715253}

\bibitem[Riz06]{Rizov2006}
J.~Rizov, \emph{Moduli stacks of polarized {$K3$} surfaces in mixed
  characteristic}, Serdica Math. J. \textbf{32} (2006), no.~2-3, 131--178.
  \MR{2263236}

\bibitem[Riz10]{Rizov2010}
\bysame, \emph{Kuga-{S}atake abelian varieties of {K}3 surfaces in mixed
  characteristic}, J. Reine Angew. Math. \textbf{648} (2010), 13--67.
  \MR{2774304}

\bibitem[RZ82]{Rapoport1982}
M.~Rapoport and T.~Zink, \emph{\"{U}ber die lokale {Z}etafunktion von
  {S}himuravariet\"{a}ten. {M}onodromiefiltration und verschwindende {Z}yklen
  in ungleicher {C}harakteristik}, Invent. Math. \textbf{68} (1982), no.~1,
  21--101. \MR{666636}

\bibitem[Sai03]{Saito2003}
T.~Saito, \emph{Weight spectral sequences and independence of {$l$}}, J. Inst.
  Math. Jussieu \textbf{2} (2003), no.~4, 583--634. \MR{2006800}

\bibitem[She17]{She2017}
Y.~She, \emph{{The unpolarized Shafarevich Conjecture for K3 Surfaces}},
  preprint (2017), arXiv:1705.09038.

\bibitem[Ste85]{Sterk1985}
H.~Sterk, \emph{Finiteness results for algebraic {$K3$} surfaces}, Math. Z.
  \textbf{189} (1985), no.~4, 507--513. \MR{786280}

\bibitem[Tat94]{Tate1994}
J.~Tate, \emph{Conjectures on algebraic cycles in {$l$}-adic cohomology},
  Motives ({S}eattle, {WA}, 1991), Proc. Sympos. Pure Math., vol.~55, Amer.
  Math. Soc., Providence, RI, 1994, pp.~71--83. \MR{1265523}

\bibitem[Vid04]{Vidal2004}
I.~Vidal, \emph{Th\'{e}orie de {B}rauer et conducteur de {S}wan}, J. Algebraic
  Geom. \textbf{13} (2004), no.~2, 349--391. \MR{2047703}

\bibitem[Zar85]{Zarhin1985}
Y.~G. Zarhin, \emph{A finiteness theorem for unpolarized abelian varieties over
  number fields with prescribed places of bad reduction}, Invent. Math.
  \textbf{79} (1985), no.~2, 309--321. \MR{778130}

\end{thebibliography}

\newcommand{\etalchar}[1]{$^{#1}$}
\providecommand{\bysame}{\leavevmode\hbox to3em{\hrulefill}\thinspace}
\providecommand{\MR}{\relax\ifhmode\unskip\space\fi MR }
\providecommand{\MRhref}[2]{%
  \href{http://www.ams.org/mathscinet-getitem?mr=#1}{#2}
}
\providecommand{\href}[2]{#2}

\end{document}